%% file: main.tex
\documentclass{article}
\usepackage{PRIMEarxiv}
\input{preamble}
\title{Mixed Unfitted finite element approximation of band structures in photonic crystals}
\input{authors}
\date{}
\begin{document}
\maketitle
\begin{abstract}
Photonic crystals are periodic dielectric structures whose optical properties are characterized by Maxwell eigenvalue problems with discontinuous material coefficients. For arbitrarily shaped inclusions, their numerical solution remains challenging due to costly mesh generation and remeshing as the geometry changes. In this paper, we address this challenge by developing a mixed unfitted finite element method based on the Floquet–Bloch transform. Our method uses a uniform background mesh independent of the inclusion geometry, thereby avoiding interface-fitted meshing and allowing us to impose the periodic boundary condition through a direct identification of degrees of freedom. A key feature of our construction is that the resulting finite element pairs preserve the de Rham commuting relation for the shifted operator. This structure enables us to establish an inf–sup condition uniformly with respect to the cut position and, in particular, to rule out spurious modes. We further establish optimal convergence of the eigenvalues and eigenspaces, with constants independent of both the mesh size and the position of the interface. Finally, we present extensive numerical experiments in two and three dimensions to validate our theoretical results and demonstrate the robustness of the proposed method.
\end{abstract}
\keywords{Photonic crystals \and Maxwell eigenvalue problem \and Unfitted finite element method
\and Discrete de~Rham complex \and Nitsche's method \and Spectral approximation}

\input{section1}
\input{section2}
\input{section3}
\input{section4}

\input{section5}
\input{section6}

\input{section7}



\section*{Funding}
This work was supported by the Melbourne Research Scholarship
from the University of Melbourne. 

\bibliographystyle{cas-model2-names}
\bibliography{references}
\end{document}

%% file: preamble.tex
\usepackage[utf8]{inputenc}
\usepackage[T1]{fontenc}
\usepackage{graphicx,amsmath,amssymb}
\usepackage{xcolor,etoolbox}
\usepackage{url,booktabs,makecell,multirow,array,colortbl,dcolumn,stfloats}
\usepackage{nicefrac,microtype}
\usepackage{hyperref}
\usepackage[numbers,sort&compress]{natbib}
\usepackage{amsthm}
\usepackage{algorithmic}
\usepackage{empheq,bm}
\usepackage{subcaption}
\usepackage{tikz,tikz-3dplot}
\usetikzlibrary{shapes}
\usepackage{cleveref}
\usepackage[mathlines]{lineno} 
\DeclareGraphicsExtensions{.pdf,.png,.jpg,.jpeg}

\numberwithin{equation}{section}
\newtheorem{theorem}{Theorem}
\newtheorem{lemma}[theorem]{Lemma}

\theoremstyle{definition}
\newtheorem{definition}[theorem]{Definition}
\theoremstyle{remark}
\newtheorem{remark}[theorem]{Remark}

\crefname{hypothesis}{Hypothesis}{Hypotheses}
\crefname{fact}{Fact}{Facts}

\definecolor{unitcellblue}{RGB}{90,89,169}
\definecolor{darksoil}{RGB}{255,255,255}
\definecolor{darksgreen}{RGB}{90,89,169}
\hypersetup{colorlinks=true,linkcolor=blue,citecolor=blue,urlcolor=blue,
 pdftitle={Mixed Unfitted finite element approximation of band structures in photonic crystals},
 pdfauthor={Yuyan Zou, Luke Bennetts, Hailong Guo}}

\allowdisplaybreaks

%% file: authors.tex
\author{
  Yuyan Zou, Luke G.~Bennetts, Hailong Guo\thanks{Corresponding authors: Hailong Guo (hailong.guo@unimelb.edu.au).} \\
  \normalfont School of Mathematics and Statistics, The University of Melbourne \\
  \normalfont Parkville, VIC 3010, Australia \\
  \normalfont\texttt{yuyan.zou@student.unimelb.edu.au} \\
  \normalfont\texttt{luke.bennetts@unimelb.edu.au} \\
  \normalfont\texttt{hailong.guo@unimelb.edu.au}
}

%% file: section1.tex
\section{Introduction}

A photonic crystal (PhC) is a dielectric medium with spatially periodic
permittivity, typically consisting of periodically arranged inclusions embedded
in a background matrix, which makes it possible to manipulate the propagation of
electromagnetic waves~\cite{joannopoulos2008molding}. 
The ability to forbid propagation within a gap, and to
guide or localize light through defects, underlies applications such as low-loss
waveguides around sharp bends~\cite{lin1998experimental}, optical
fibers~\cite{russell2003photonic, mescia2011particle}, near-infrared photonic
structures~\cite{krauss1996two}, and low-threshold
lasers~\cite{painter1999two}. The concept was introduced independently by
Yablonovitch~\cite{yablonovitch1987inhibited} and John~\cite{john1987strong}
in 1987.

The propagation of electromagnetic waves in such a medium is governed by a
time-harmonic Maxwell eigenvalue problem with a periodic
coefficient~\cite{joannopoulos2008molding}. By Floquet--Bloch
theory~\cite{kuchment2012floquet}, this problem decomposes into a family of
eigenvalue problems on a single unit cell, one for each \textit{quasi-momentum}, $\bm k$,
in the \textit{Brillouin zone}.  
The corresponding eigenvalues, \(\omega_n(\bm k)\), form the \textit{band structure}, which characterizes the allowed and forbidden frequencies for electromagnetic wave propagation and is fundamental to photonic crystal design~\cite{joannopoulos2008molding,ammari2018mathematical}.
Therefore, accurate computation
of the band structure is  essential, and is the problem we address in this paper.

The inclusions used in practice are often geometrically complicated, involving nonconvex
cross-sections, rotations, and connected three-dimensional frameworks~\cite{feng2005imaging, serbin2004fabrication, takahashi2009direct}. Moreover, the geometry is not fixed. 
A PhC device is designed by varying
the shape, orientation, and filling fraction of the inclusion until the band
structure meets a specification~\cite{mescia2011particle}, so the band structure
must be recomputed for a sequence of related geometries. Both features affect
the choice of discretization, and they motivate the method developed here.

Since analytical band structures are available only in very special cases, they
must in general be computed numerically. 
However, PhC computation presents three main challenges.
(i)~The dielectric coefficient is discontinuous across material interfaces, which reduces the regularity of the electromagnetic fields and complicates the numerical approximation.
(ii)~The Floquet--Bloch condition,
$\bm u(\bm x+\bm R)=e^{i\bm k\cdot\bm R}\bm u(\bm x)$, for lattice vectors $\bm R$, 
is quasi-periodic rather than periodic, and must be imposed on the unit
cell~\cite{kuchment2012floquet}. 
(iii)~The curl curl operator has a large
kernel, so the divergence-free constraint must be handled correctly~\cite{jiang1996origin, boffi1999computational}.

Several computational methods have been developed, including the plane wave expansion (PWE)
method~\cite{ho1990existence,shi2004plane,li2003photonic,norton2013planewave},
finite-difference time-domain (FDTD)
methods~\cite{de2001finite,wu2000analysis}, finite element methods
(FEM)~\cite{andonegui2013finite,degirmenci2013finite,giani2012adaptive}, the
scattering-matrix approach~\cite{crocco2007generalized}, and boundary integral
equations~\cite{yuan2008modeling}. PWE expands the fields in a truncated Fourier
series and is efficient for smooth structures, but converges
slowly~\cite{sakoda2005optical} at sharp interfaces due to the Gibbs
phenomenon~\cite{inguva1992photonic}. FDTD recovers
the spectrum from a time-domain simulation and represents curved interfaces
through a staircase approximation that degrades the
accuracy~\cite{cangellaris1991analysis}. FEM,
by contrast, handles complex geometries naturally.

FEM can be classified as body-fitted or unfitted, depending on whether the mesh
resolves the material interfaces. Body-fitted approaches include adaptive
$h$-FEM~\cite{giani2012adaptive}, spectral elements~\cite{luo2009spectral}, and
discontinuous Galerkin
methods~\cite{lu2017discontinuous,busch2011discontinuous,chen2020multiphysics}.
They benefit from solid theoretical foundations but suffer from a substantial
mesh-generation cost, as the mesh must conform to every interface and must be
regenerated whenever the geometry is varied. Unfitted methods reduce this cost by
allowing interfaces to cut mesh elements
arbitrarily~\cite{hansbo2002unfitted,wang2021finite}, so that the same background mesh can be used for different geometries. 
Moreover, a uniform background in combination with the Floquet–Bloch transform allows the quasi-periodic boundary conditions to be applied straightforwardly.

Within the unfitted framework, several finite element methods have been developed in recent years for Maxwell interface problems, including adaptive immersed FEMs~\cite{chen2009adaptiveimmersed}, immersed virtual element methods~\cite{cao2021virtual}, high-order unfitted FEMs for time-harmonic Maxwell interface problems~\cite{chen2024highorder}, and interface-penalty and mixed unfitted formulations~\cite{liu2020interface,wang2025mixed}. These studies, however, focus primarily on source problems rather than Maxwell eigenvalue problems. Passing from a source problem to the sort of eigenvalue problems that arise for PhCs requires the discrete kernel to be controlled uniformly and the discrete solution operators to converge in a form that prevents spectral pollution~\cite{monk2003finite,boffi2010finite}, in addition to the standard stability for a prescribed right-hand side. 
For example, the mixed unfitted method
proposed by~\cite{wang2025mixed}  was developed and analyzed for a Maxwell
interface source problem using second-family N\'ed\'elec elements of degree one
together with $P_1$ Lagrange elements. 
This pair does not realize the compatible
sequence used in the present spectral analysis, for which the corresponding
scalar space is piecewise $P_2$. Consequently, although the method
in~\cite{wang2025mixed} is convergent for the source problem, it still suffers from spurious eigenvalues when applied to the Maxwell eigenvalue problem.

For the computation of PhCs governed by Maxwell’s equations with complex inclusion geometries, the only unfitted approach currently available in the literature appears to be the Petrov–Galerkin finite element interface method proposed in~\cite{wang2021finite}. This method discretizes the two-dimensional Maxwell interface eigenvalue problem using Lagrange-type elements and imposes the quasi-periodic boundary condition directly, without employing the Floquet–Bloch transform. More importantly, the work focuses primarily on the numerical formulation and experiments and does not provide a rigorous theoretical analysis.
For crystals governed by $H^1$-elliptic equations, Guo~\cite{guo2021topmaterials,guo2021unfitted} combined an unfitted FEM with the Floquet–Bloch transform and the Babu\v{s}ka–Osborn theory to establish spectral convergence for the resulting eigenvalue problem. Such an analysis, however, does not extend directly to the Maxwell eigenproblem. The latter is posed in $H(\operatorname{curl})$, involves an operator with an infinite-dimensional kernel, and lacks the ellipticity available in the $H^1$ setting. More importantly, a convergent and spurious-free discretization requires compatible finite element spaces that preserve the underlying de~Rham structure through an appropriate commuting relation. To the best of our knowledge, a rigorous spectral convergence theory for unfitted discretizations of the quasi-periodic Maxwell interface eigenvalue problem is still unavailable. In this work, we fill this gap by developing such a theory in both two and three dimensions.

In this paper, we combine the Floquet–Bloch transform~\cite{kuchment2012floquet} with an unfitted finite element framework to develop a mixed discretization of the quasi-periodic Maxwell interface eigenvalue problem. The Floquet–Bloch transform reduces the problem to a periodic unit-cell problem, with the quasi-momentum incorporated into the shifted differential operators through $\nabla_{\bm k}=\nabla+i\bm k$ and, in the presence of a material interface, into the corresponding interface conditions. We discretize the resulting problem on a fixed uniform  mesh that is independent of the inclusion geometry and can therefore be reused as the geometry changes. On this mesh, we impose periodicity by directly identifying the corresponding degrees of freedom, enforce the interface conditions weakly using Nitsche’s method~\cite{nitsche1971variationsprinzip,hansbo2002unfitted}, and employ a face-based ghost penalty~\cite{burman2010ghost,burman2012fictitious,burman2015cutfem} to stabilize arbitrarily small cut elements.

Further, we construct modified unfitted N\'ed\'elec and $P_2$ Lagrange spaces that preserve the de~Rham commuting structure for the shifted operator $\nabla_{\bm k}$. In particular, our construction ensures that $\nabla_{\bm k}q_h$ belongs to the discrete modified N\'ed\'elec space. Exploiting this structure, we establish an inf–sup condition with a constant uniform with respect to the position of the interface relative to the background mesh. This uniform stability allows us to control the discrete kernel and, crucially, to rule out spurious modes.

For each fixed quasi-momentum, we establish the well-posedness and uniform stability of the discrete problem and derive error estimates for the associated source problem. Building on these results and the Babu\v{s}ka–Osborn spectral approximation theory~\cite{babuska1991eigenvalue,osborn1975spectral}, we prove second-order convergence of the eigenvalues and eigenspaces, with constants uniform with respect to both the mesh size and the position of the interface relative to the background mesh. We present numerical experiments in two and three dimensions that confirm the predicted convergence rates and demonstrate the robustness of the proposed method.

The rest of the paper is organized as follows.
Section~\ref{sec:model} formulates the photonic-crystal eigenvalue problem and
its Floquet--Bloch reduction. Section~\ref{chap:chapter3} introduces the mixed
unfitted finite element discretization, and Section~\ref{sec:stability}
establishes its well-posedness. Section~\ref{sec:error-analysis} presents the
error and spectral convergence analysis. Section~\ref{sec:numerical-results}
reports the numerical experiments, and Section~\ref{sec:conclusion} concludes
the paper.

%% file: section2.tex
\section{Mathematical Model}\label{sec:model}
A photonic crystal in $\mathbb{R}^d$ ($d=2,3$) is modeled using a spatially periodic relative permittivity $\varepsilon(\bm{x})>0$. 
We assume that the crystal is linear and non-dispersive and contains no free charges or currents. The propagation of electromagnetic waves is then governed by
\begin{subequations}\label{master}
\begin{align}
\nabla \times \left( \varepsilon^{-1} \nabla \times \mathbf{H} \right)
&= \omega^2 \mathbf{H}
\quad &&\text{in } \mathbb{R}^{d}, \label{mastera} \\[0.5em]
\nabla \cdot \mathbf{H}
&= 0
\quad &&\text{in } \mathbb{R}^{d}. \label{masterb}
\end{align}
\end{subequations}
where $\mathbf{H}(\bm{x})$ is the magnetic field and $\omega \geq 0$ denotes the angular frequency.

We focus on photonic crystals composed of embedded inclusions within a matrix, with
\begin{equation}\label{eq:epsilon}
  \varepsilon(\bm{x}) = 
  \begin{cases} 
    \varepsilon_+ & \text{in the matrix region,} \\[0.3em] 
    \varepsilon_- & \text{in the inclusions,}
  \end{cases}
\end{equation}
where $\varepsilon_+$ and $\varepsilon_-$ are positive constants.
The inclusions are periodically arranged according to the \emph{Bravais lattice}~\cite{kittel2018introduction}
\begin{equation}\label{eq:bravais}
A := \left\{ \bm{R} = \sum_{i=1}^{d} n_i \bm{a}_i \;\middle|\; n_i \in \mathbb{Z} \right\},
\end{equation}
spanned by linearly independent \emph{primitive lattice vectors} $\bm{a}_1, \dots, \bm{a}_d \in \mathbb{R}^d$ , so that $\varepsilon(\bm{x} + \bm{R}) = \varepsilon(\bm{x})$ for all $\bm{R}\in A$ (see Fig.~\ref{fig:lattice2d3d}). 
The associated \emph{primitive unit cell} is
\begin{equation}\label{eq:unitcell}
  \Omega := \left\{ \sum_{i=1}^{d} s_i \bm{a}_i \;\middle|\; 0 \leq s_i < 1 \right\},
\end{equation}
and decomposes as $\Omega = \Omega^+ \cup \Sigma \cup \Omega^-$, with inclusion $\Omega^-$, matrix $\Omega^+ := \Omega \setminus \overline{\Omega^-}$, and interface $\Sigma := \partial \Omega^-$. We assume $\Sigma$ is continuous and denote by $\bm{n}$ its unit normal oriented from $\Omega^-$ to $\Omega^+$.

\begin{figure}[htbp]
\centering
\begin{subfigure}[b]{0.45\textwidth}
    \centering
    \includegraphics[width=\textwidth]{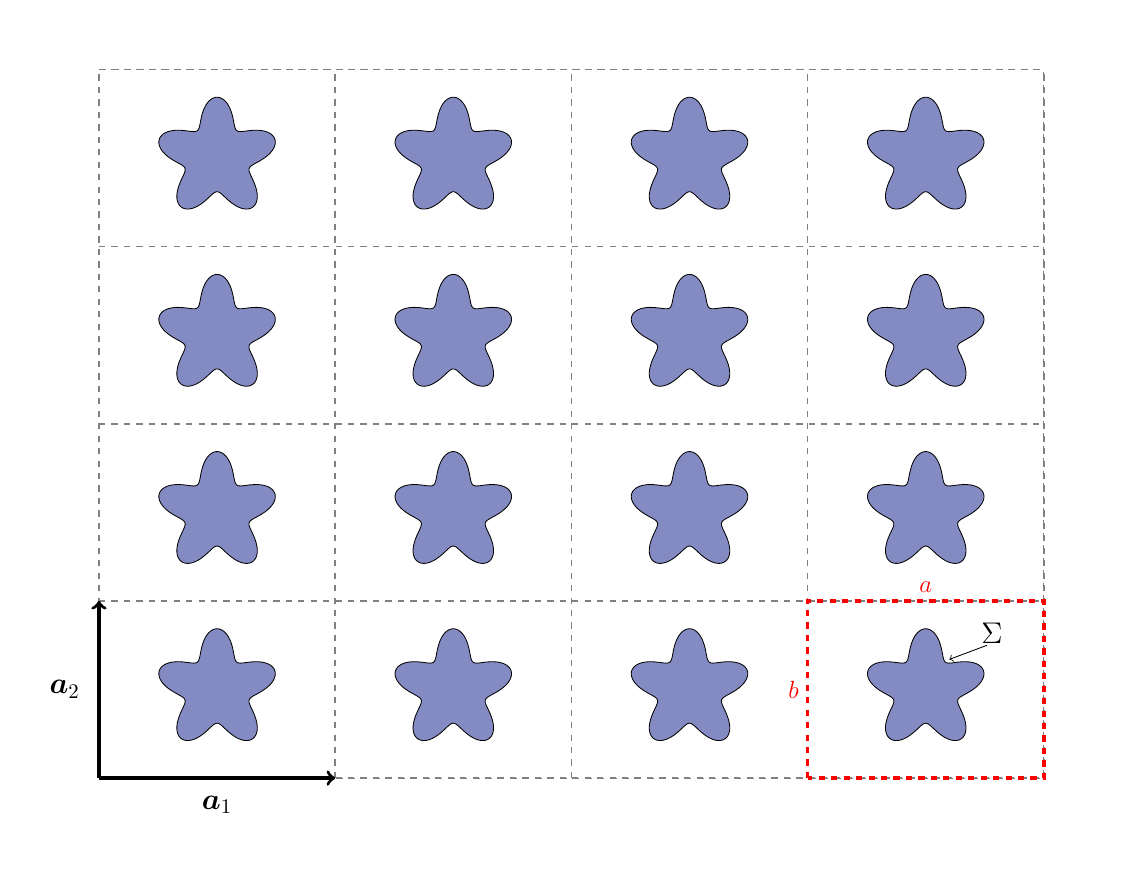}
    \caption{Two-dimensional periodic structure.}
    \label{fig:lattice2D}
\end{subfigure}
\begin{subfigure}[b]{0.46\textwidth}
    \centering
    \includegraphics[width=\textwidth]{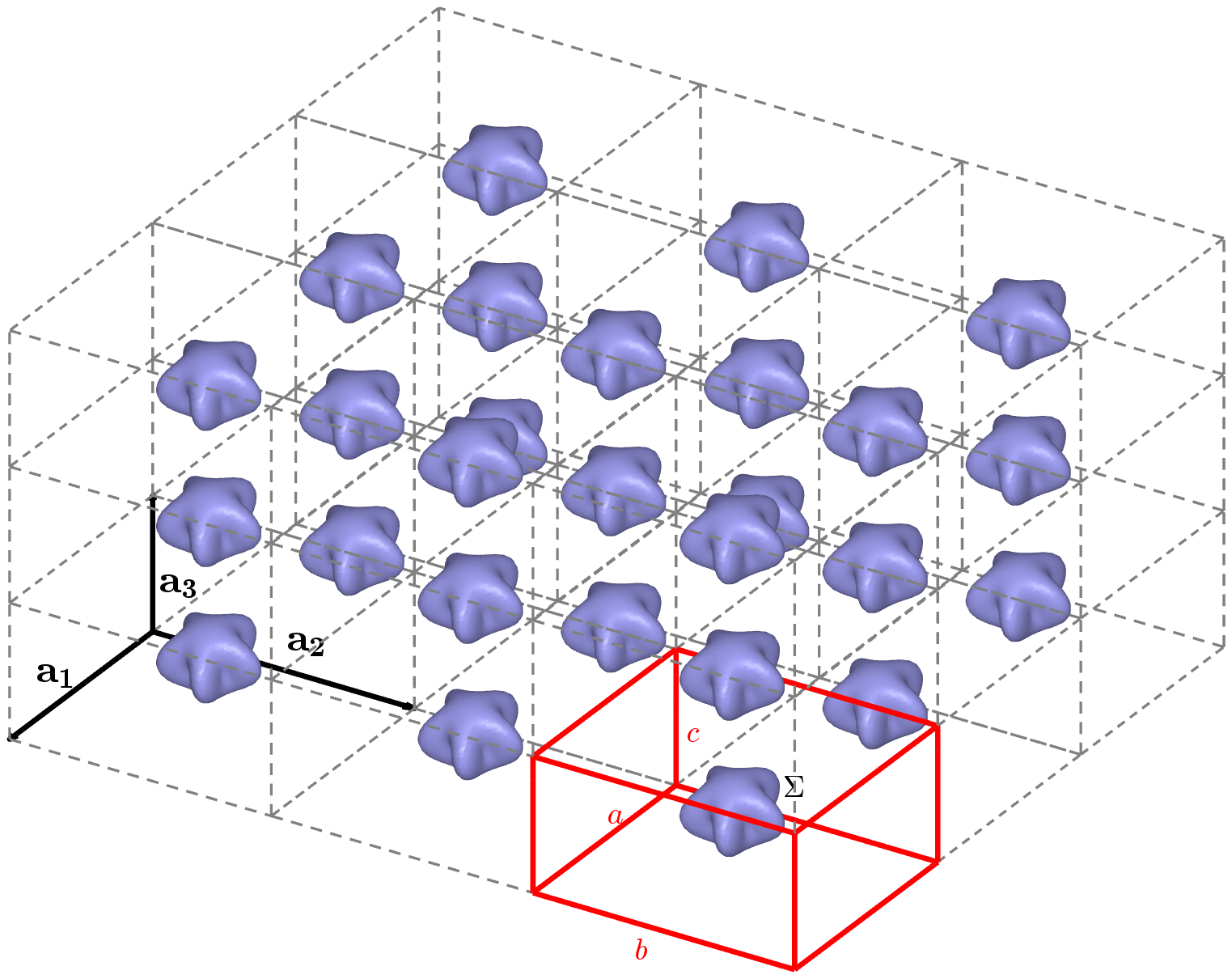}
    \caption{Three-dimensional periodic structure.}
    \label{fig:lattice3D}
\end{subfigure}
\caption{Schematic illustrations of two-dimensional and three-dimensional 
photonic crystal structures with the associated primitive lattice vectors. In Fig.~\ref{fig:lattice2d3d}(\subref{fig:lattice2D}) the lattice vectors are chosen as $\bm{a}_1=(a,0)$ and $\bm{a}_2=(0,b)$. In Fig.~\ref{fig:lattice2d3d}(\subref{fig:lattice3D}) the lattice vectors are $\bm{a}_1=(a,0,0)$, $\bm{a}_2=(0,b,0)$ and $\bm{a}_3=(0,0,c).$}
\label{fig:lattice2d3d}
\end{figure}
Since $\varepsilon$ is discontinuous across $\Sigma$, the equations are supplemented by the electromagnetic jump conditions~\cite[Section~1.5]{jackson1999classical} requiring continuity of the tangential components of $\mathbf{H}$ and $\varepsilon^{-1}\nabla\times\mathbf{H}$. To express these conditions uniformly for $d=2,3$, we introduce the tangential trace operators on $\Sigma$
\begin{equation}\label{eq:tanops}
\gamma_t(\bm{v}):=
\begin{cases}
\bm{v}\cdot \bm{t}, & d=2,\\[0.3em]
\bm{v}\times \bm{n}, & d=3,
\end{cases}
\qquad
\gamma_{\mathcal{T}}(\bm{w}):=
\begin{cases}
\bm{w}, & d=2,\\[0.3em]
\bm{n}\times \bm{w}, & d=3,
\end{cases}
\end{equation}
where, for $d=2$, $\bm{t}:=(-n_2,n_1)^\top$ denotes the unit tangent vector, and $[\![\bm{v}]\!]$ denotes the jump across $\Sigma$
\begin{equation}\label{eq:jump}
[\![ \bm{v} ]\!](\bm{x})
:= \lim_{\delta \to 0^{+}} \bigl[\bm{v}(\bm{x}-\delta \bm{n}) - \bm{v}(\bm{x}+\delta \bm{n})\bigr],
\qquad \bm{x}\in \Sigma.
\end{equation}
The same definition applies to scalar functions.

By the Bloch--Floquet theorem~\cite{joannopoulos1995photonic, floquet1883equations, bloch1929quantenmechanik}, any solution 
of \eqref{master} takes the form
\begin{equation}\label{eq:bloch}
  \mathbf{H}(\bm{x}) = e^{i\bm{k}\cdot\bm{x}}\,\mathbf{u}_{\bm{k}}(\bm{x}),
\end{equation}
where $\mathbf{u}_{\bm{k}}$ is periodic on $A$, that is, $\mathbf{u}_{\bm{k}}(\bm{x}+\bm{R}) = \mathbf{u}_{\bm{k}}(\bm{x})$ for all $\bm{R}\in A$, and the quasi-momentum $\bm{k}$ ranges over the \emph{first Brillouin zone}~$\mathcal{B}$~\cite{kittel2018introduction}.
The \emph{first Brillouin 
zone} $\mathcal{B}$ is defined as the Wigner--Seitz cell of the reciprocal 
lattice
\begin{equation}\label{eq:Brillouinzone}
  A^{*} := \left\{ \sum_{i=1}^{d} m_i \bm{b}_i \;\middle|\; 
  m_i \in \mathbb{Z} \right\},
\end{equation}
whose basis vectors satisfy $\bm{b}_i \cdot \bm{a}_j = 2\pi\delta_{ij}$ for $i,j = 1,\dots,d$.

Using the Bloch representation \eqref{eq:bloch}, problem \eqref{master} on $\mathbb{R}^d$ can be reduced to a family of eigenvalue problems on $\Omega$. The problem then reads: for each $\bm{k}\in\mathcal{B}$, find eigenvalues $\omega^2 \geq 0$ and corresponding eigenfunctions $\mathbf{H}\not\equiv\bm{0}$ such that
\begin{subequations}\label{eq:Model1}
\begin{align}
\nabla \times \left( \varepsilon^{-1} \nabla \times \mathbf{H} \right) &= \omega^2 \mathbf{H}
\quad & \bm{x} \in \Omega \setminus \Sigma, \label{Model1a}\\[0.5em]
\nabla \cdot \mathbf{H} &= 0
\quad & \bm{x} \in \Omega \setminus \Sigma, \label{Model1b}\\[0.5em]
[\![\gamma_t(\mathbf{H})]\!] &= 0 
\quad &\bm{x} \in \Sigma, \label{Model1c}\\[0.5em]
[\![\gamma_\mathcal{T}(\varepsilon^{-1}\nabla \times \mathbf{H})]\!] &= 0
\quad &\bm{x} \in \Sigma, \label{Model1d}\\[0.5em]
\mathbf{H}(\bm{x} + \bm{R}) &= e^{i\bm{k}\cdot\bm{R}}\,\mathbf{H}(\bm{x}) 
\quad &\bm{R}\in A, \quad \bm{x} \in \partial\Omega. \label{Model1e}
\end{align}
\end{subequations}
Conditions \eqref{Model1c}--\eqref{Model1d} give the jump conditions that any eigenfunction must satisfy across the material interface $\Sigma$, and \eqref{Model1e} imposes quasi-periodic boundary conditions on $\mathbf{H}$ across $\partial\Omega$.
The full spectrum of \eqref{master} is recovered by solving \eqref{eq:Model1} over all $\bm{k}\in\mathcal{B}$.
For each $\bm{k}\in\mathcal{B}$, \eqref{eq:Model1} admits a discrete sequence of eigenvalues
\begin{equation}\label{eq:spectrum}
  0 \leq \omega_1(\bm{k})^2 \leq \omega_2(\bm{k})^2 \leq \cdots \to +\infty,
\end{equation}
counted with multiplicity. The maps $\bm{k}\mapsto \omega_n(\bm{k})$, $n=1,2,\dots$, are called the \emph{dispersion relations}, and the set of all admissible frequencies
\begin{equation}\label{eq:bandstructure}
  \sigma = \bigcup_{\bm{k}\in\mathcal{B}} \bigcup_{n=1}^{\infty} \{\omega_n(\bm{k})\}
\end{equation}
forms the \emph{band structure} of the photonic crystal. A \emph{band gap} is a connected interval $(\omega_a,\omega_b)\subset\mathbb{R}^+$ such that $(\omega_a,\omega_b)\cap\sigma = \emptyset$, i.e.\ a range of frequencies at which no electromagnetic wave can propagate through the crystal.


\subsection{Bloch--Floquet Theory}
\label{sec:cell} 

Introducing the shifted operator
\begin{equation}\label{eq:shiftedcurl}
  \nabla_{\bm{k}} := \nabla + i\bm{k},
\end{equation}
and substituting \eqref{eq:bloch} into \eqref{eq:Model1}, we reformulate the quasi-periodic problem equivalently in terms of the lattice-periodic function $\bm{u}_{\bm{k}}$. The problem restricted to a unit cell reads: for fixed $\bm{k}\in\mathcal{B}$, find $\omega^2\in\mathbb{R}^{+}$ and $\bm{u}_{\bm{k}}:\Omega\to\mathbb{C}$, 
$\bm{u}_{\bm{k}} \neq \bm{0}$, such that
\begin{subequations}\label{eq:Model2}
\begin{align}
\nabla_{\bm{k}} \times \left( \varepsilon^{-1} \nabla_{\bm{k}} \times \bm{u}_{\bm{k}} \right) &= \omega^2 \bm{u}_{\bm{k}}
\quad & \bm{x} \in \Omega \setminus \Sigma, \label{Model2a}\\[0.5em]
\nabla_{\bm{k}} \cdot \bm{u}_{\bm{k}} &= 0
\quad & \bm{x} \in \Omega \setminus \Sigma, \label{Model2b}\\[0.5em]
[\![\gamma_t(\bm{u}_{\bm{k}})]\!] &= 0
\quad &\bm{x} \in \Sigma, \label{Model2c}\\[0.5em]
[\![\gamma_\mathcal{T}(\varepsilon^{-1}\nabla_{\bm{k}} \times \bm{u}_{\bm{k}})]\!] &= 0
\quad &\bm{x} \in \Sigma, \label{Model2d}\\[0.5em]
\bm{u}_{\bm{k}}(\bm{x} + \bm{R}) &= \bm{u}_{\bm{k}}(\bm{x})
\quad &\bm{R}\in A, \quad \bm{x} \in \partial\Omega, \label{Model2e}
\end{align}
\end{subequations}
where $\varepsilon$ is the permittivity defined in 
\eqref{eq:epsilon}, and $\nabla_{\bm{k}}$ is the shifted 
operator introduced in \eqref{eq:shiftedcurl}. In contrast to existing formulations, the quasi-momentum $\bm{k}$ appears not only in the governing equations but also in the interface condition \eqref{Model2d} through the shifted curl operator $\nabla_{\bm{k}} \times$.
The unit cells used in the computations are illustrated in Fig.~\ref{fig:unitcell}.
\begin{figure}[htbp]
    \centering
    \begin{subfigure}[b]{0.42\textwidth}
        \centering
        \includegraphics[width=\textwidth]{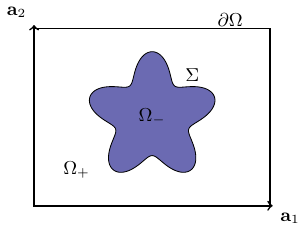}
        \caption{Two-dimensional unit cell.}
        \label{fig2dcell}
    \end{subfigure}
    \begin{subfigure}[b]{0.45\textwidth}
        \centering
        \includegraphics[width=\textwidth]{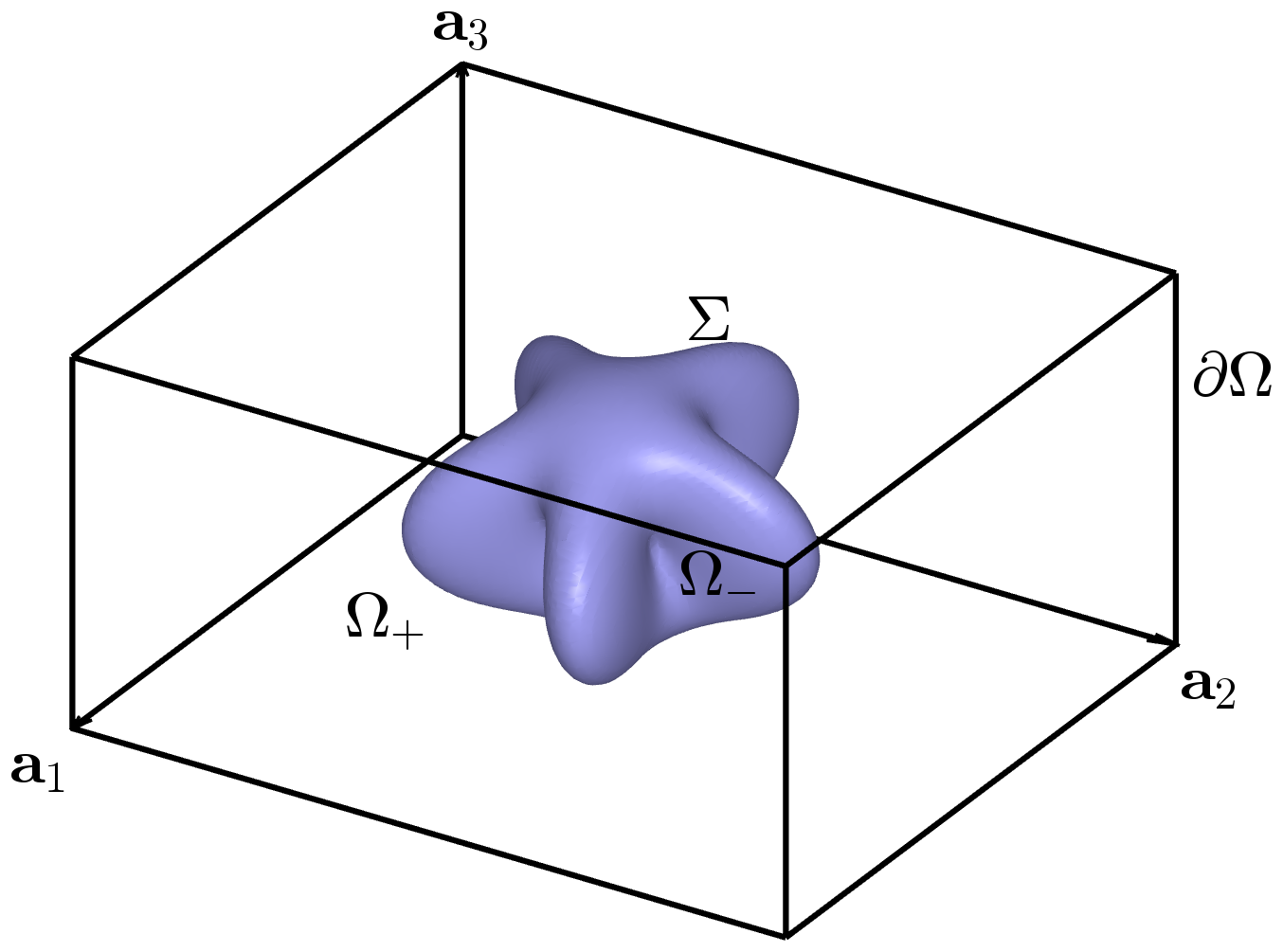}
        \caption{Three-dimensional unit cell.}
        \label{fig3dcell}
    \end{subfigure}
    \caption{Unit cell in two and three dimensions: matrix $\Omega^+$ and inclusion $\Omega^-$, separated by the interface $\Sigma$, with outer boundary $\partial\Omega$.}
    \label{fig:unitcell}
\end{figure}

\subsection{Functional Spaces}
In this subsection we introduce the function spaces in which
problem~\eqref{eq:Model2} is analyzed. For any subdomain $D\subseteq\Omega$, we use the standard 
notation $\mathbf{L}^2(D):=(L^2(D))^d$ with inner product 
$(\bm{u},\bm{v})_D=\int_D\bm{u}\cdot\bar{\bm{v}}\,\mathrm{d}\bm{x}$, 
and recall the Sobolev spaces \cite{evans2022partial}
\[
H^1(D) := \{ v \in L^2(D) : \nabla v \in \mathbf{L}^2(D) \},
\]
\[
\mathbf{H}(curl;D) := \{ \boldsymbol{v} \in \mathbf{L}^2(D) : \nabla \times \boldsymbol{v} \in \mathbf{L}^2(D) \},
\]
\[
\mathbf{H}(div;D) := \{ \boldsymbol{v} \in \mathbf{L}^2(D) :\nabla \cdot \boldsymbol{v} \in L^2(D) \}.
\]
The spaces
$\mathbf{H}(curl;\Omega)$ and $\mathbf{H}(div;\Omega)$ are equipped with inner products:
\[
(\boldsymbol{u},\boldsymbol{v})_{\mathbf{H}(\mathrm{curl};\Omega)}
  =(\boldsymbol{u},\boldsymbol{v})_{\Omega}
   +(\nabla\times\boldsymbol{u},\nabla\times\boldsymbol{v})_{\Omega},
\quad
(\boldsymbol{u},\boldsymbol{v})_{\mathbf{H}(\mathrm{div};\Omega)}
  =(\boldsymbol{u},\boldsymbol{v})_{\Omega}
   +(\nabla\cdot\boldsymbol{u},\nabla\cdot\boldsymbol{v})_{\Omega}.
\]
To account for the periodicity of the Bloch eigenfunctions, let 
$C^\infty_{\mathrm{per}}(\bar{\Omega})$ denote the space of smooth 
functions satisfying $q(\bm{x}+\bm{a})=q(\bm{x})$ for all $\bm{a}\in A$ and 
$\bm{x}\in\partial\bar{\Omega}$, where $A$ denotes the Bravais lattice. The periodic 
Lebesgue space $L^2_{\mathrm{per}}(\Omega)$ is defined as the closure of $C^\infty_{\mathrm{per}}(\bar{\Omega})$ with respect to $\|\cdot\|_{0,D}$ (see, e.g., \cite{lu2017discontinuous} section 3) in $L^2(\Omega)$, and we set $\bm{L}^2_{\mathrm{per}}(\Omega):=[L^2_{\mathrm{per}}(\Omega)]^d$.

\subsubsection{Bloch Map and Modified Spaces}
The electromagnetic field takes the form $\mathbf{H}(\bm{x}) = e^{i\bm{k}\cdot\bm{x}}\bm{u}(\bm{x})$ with $\bm{u}$ periodic. We introduce the Bloch map to pass between the two representations, and we define all working spaces in terms of $\bm{u}$.

\begin{definition}[Bloch map]
\label{def:bloch_map}
For $\bm{k}\in\mathcal{B}$, define the Bloch map
\begin{equation}\label{eq:bloch_map}
  \Phi_{\bm{k}} : \bm{u} \longmapsto \mathbf{H} := e^{i\bm{k}\cdot\bm{x}}\bm{u},
  \qquad
  \Phi_{\bm{k}}^{-1} : \mathbf{H} \longmapsto \bm{u} := e^{-i\bm{k}\cdot\bm{x}}\mathbf{H}.
\end{equation}
The map $\Phi_{\bm{k}}$ intertwines the shifted operator
$\nabla_{\bm{k}} = \nabla + i\bm{k}$ with the standard operator $\nabla$:
\begin{align}
  \nabla_{\bm{k}}\bm{u}
    &= e^{-i\bm{k}\cdot\bm{x}}\nabla\!\left(e^{i\bm{k}\cdot\bm{x}}\bm{u}\right)
     = \Phi_{\bm{k}}^{-1}\nabla\Phi_{\bm{k}}\bm{u},
  \label{eq:bloch_grad}\\
  \nabla_{\bm{k}}\times\bm{u}
    &= \Phi_{\bm{k}}^{-1}(\nabla\times\Phi_{\bm{k}}\bm{u}).
  \label{eq:bloch_curl}
\end{align}
\end{definition}
Since the solutions of~\eqref{eq:Model1} possess piecewise regularity across $\Sigma$, we work with the following piecewise periodic spaces:
\begin{align}\label{eq:V_space}
\tilde{\bm{V}} &:= 
\mathbf{H}_{\mathrm{per}}(\mathrm{curl};\Omega^+\cup\Omega^-)
  := \bigl\{\bm{v}\in\bm{L}^2_{\mathrm{per}}(\Omega):
     \bm{v}|_{\Omega^\pm}\in\mathbf{H}(\mathrm{curl};\Omega^\pm)\bigr\},\\
\label{eq:P_space}
\tilde{P} &:= H^1_{\mathrm{per}}
(\Omega^+\cup\Omega^-)
  := \bigl\{q\in L^2_{\mathrm{per}}(\Omega):
     q|_{\Omega^\pm}\in H^1(\Omega^\pm)\bigr\}.
\end{align}

For a given quasi-momentum $\bm{k}\in\mathcal{B}$, the corresponding \emph{modified piecewise spaces} are defined via $\Phi_{\bm{k}}$ as
\begin{align}
  \bm{V}
    &:= \Phi_{\bm{k}}^{-1}\tilde{\bm{V}}
     = \bigl\{\tilde{\bm{v}}\in\bm{L}^2_{\mathrm{per}}(\Omega):
       e^{i\bm{k}\cdot\bm{x}}\tilde{\bm{v}}\in\tilde{\bm{V}}\bigr\},
  \label{eq:Vk}\\
  P
    &:= \Phi_{\bm{k}}^{-1}\tilde{P}
     = \bigl\{\tilde{q}\in L^2_{\mathrm{per}}(\Omega):
       e^{i\bm{k}\cdot\bm{x}}\tilde{q}\in \tilde{P}\bigr\},
  \label{eq:Pk}
\end{align}
equipped with the norms induced by $\Phi_{\bm{k}}$:
\begin{align}\label{eq:Vk_norm}
  \|\tilde{\bm{v}}\|_{\bm{V}}^2
    := \sum_{m\in\{+,-\}}\!\left(\|\tilde{\bm{v}}\|_{0,\Omega^m}^2
         +\|\nabla_{\bm{k}}\times\tilde{\bm{v}}\|_{0,\Omega^m}^2\right),\\
  \|\tilde{q}\|_{P}^2
    := \sum_{m\in\{+,-\}}(\|\tilde{q}\|_{0,\Omega^m}^2+\|\nabla_{\bm{k}}\tilde{q}\|_{0,\Omega^m}^2).
\end{align}
\begin{remark}
The spaces $\tilde{\bm{V}}$ and $\tilde{P}$ appear only in the definition of $\bm{V}$ and $P$ above.
All subsequent analysis is carried out entirely in $\bm{V}$ and $P$, with the shifted operators $\nabla_{\bm{k}}$ and $\nabla_{\bm{k}}\times$.
\end{remark}
We further define the shifted-curl kernel and the shifted divergence-free
space, respectively:
\begin{align}\label{eq:V0}
\boldsymbol{V}_0
  &:= \bigl\{\boldsymbol{v}\in\boldsymbol{V} :
     \nabla_{\bm{k}}\times\boldsymbol{v}|_{\Omega^\pm} = \bm{0}\bigr\},\\
\label{eq:divfree}
\bm{H}_{\mathrm{per}}(\mathrm{div}^0_{\bm{k}};\Omega^+\cup\Omega^-)
  &:= \bigl\{\bm{v}\in
     \mathbf{H}_{\mathrm{per}}(\mathrm{div};\Omega^+\cup\Omega^-):
     \nabla_{\bm{k}}\cdot\bm{v}|_{\Omega^\pm}=0\bigr\}.
\end{align}

\subsection{Mixed Formulation}
To impose the divergence constraint, we introduce a scalar Lagrange multiplier
$p$. For each $\bm{k}\in\mathcal{B}$, the resulting mixed formulation of
problem~\eqref{eq:Model2} is
\begin{subequations}\label{eqmain}
\begin{align}
\nabla_{\bm{k}} \times \left(\varepsilon^{-1}\nabla_{\bm{k}} \times \bm{u} \right)-\nabla_{\bm{k}}p
&= \omega^2 \bm{u}
\quad & \bm{x} \in \Omega \backslash \Sigma, \label{eqmaina} \\[0.5em]
\nabla_{\bm{k}} \cdot \bm{u}
&= 0
\quad & \bm{x} \in \Omega \backslash \Sigma, \label{eqmainb} \\[0.5em]
[\![p]\!]=0, \quad [\![\gamma_t(\bm{u})]\!]
&= 0
\quad & \bm{x} \in \Sigma, \label{eqmainc} \\[0.5em]
[\![\gamma_\mathcal{T}(\varepsilon^{-1}\nabla_{\bm{k}}\times \bm{u})]\!]
&= 0
\quad & \bm{x} \in \Sigma, \label{eqmaind} \\[0.5em]
\bm{u}(\bm{x} + \bm{R}) 
&= \bm{u}(\bm{x}) 
\quad &\bm{R}\in A, \quad \bm{x} \in \partial\Omega, \label{eqmaine}
\end{align}
\end{subequations}
where $\varepsilon$ is the positive piecewise-constant permittivity of \eqref{eq:epsilon} and
$[\![\cdot]\!]$ denotes the jump defined in \eqref{eq:jump}.

The weak formulation of problem~\eqref{eqmain} is given by: for each $\bm{k} \in \mathcal{B}$,
find $(\omega^2, \bm{u}, p) \in \mathbb{R}^{+} \times \bm{V} \times P$ with $\bm{u} \neq \bm{0}$ such that:
\begin{equation}
\left\{
\begin{aligned}
a(\bm{u}, \bm{v}) + b(p, \bm{v}) &= \omega^2 (\bm{u}, \bm{v})
&& \forall \bm{v} \in \bm{V}, \\
\overline{b(q, \bm{u})}&= 0
&& \forall q \in P,
\end{aligned}
\right.
\end{equation}
with the sesquilinear forms defined by
\begin{align}
a(\bm u,\bm v)
=&\sum_{m=\pm} \int_{\Omega^{m}}(\varepsilon_{m}^{-1} \nabla_{\bm{k}} \times \bm{u}^m) \cdot \overline{(\nabla_{\bm{k}} \times \bm{v}^m)} \, dx ,
\\
(\bm{u}, \bm{v}) =& \sum_{m=\pm}\int_{\Omega^{m}} \bm{u}^m \cdot \overline{\bm{v}^m} \, dx, \\
b(p, \bm{v}) =& -\sum_{m=\pm}\int_{\Omega^{m}} p^m \, \overline{(\nabla_{\bm{k}} \cdot \bm{v}^m)} \, dx.
\end{align}

%% file: section3.tex
\section{Mixed Unfitted Finite Element Method}\label{chap:chapter3}
We formulate a mixed unfitted finite element method (MUFEM) in which the mesh
is constructed independently of the interface $\Sigma$. The method employs
modified finite element spaces and commuting interpolation operators to
preserve the relevant discrete de Rham structure, as expressed by the
commuting diagram \eqref{eq:bloch_comm_diag}.

\subsection{Mesh Discretization}
To impose the periodic boundary conditions, let $\mathcal{T}_h$ be a uniform, shape-regular partition of the computational domain $\Omega \subset \mathbb{R}^d(d=2,3)$, generated without regard to the internal interface $\Sigma$. We denote an arbitrary element of the partition by $K \in \mathcal{T}_h$ and the global mesh size by
$h := \max_{K \in \mathcal{T}_h} \mathrm{diam}(K).$
The set of interface elements is defined by
\begin{equation}
\mathcal{T}_{\Sigma}:=
\{K\in\mathcal T_h:\; K\cap \Sigma\neq \emptyset\}.
\end{equation}
For each $K\in\mathcal{T}_\Sigma$ we write $\Sigma_K := K\cap\Sigma$
and define the piecewise interface
$\Sigma_h := \bigcup_{K\in\mathcal{T}_\Sigma}\Sigma_K$.
The sets of interior faces and ghost-penalty faces are defined by
\begin{align}
  \mathcal{F}_h^{\mathrm{int}}
    &:= \{F = K^+\cap K^- : K^+,K^-\in\mathcal{T}_h,\; K^+\neq K^-\},\\
\label{eq:Fh_Sigma}
  \mathcal{F}_h^g
    &:= \{F\in\mathcal{F}_h^{\mathrm{int}} :
        K^+\in\mathcal{T}_\Sigma \text{ or } K^-\in\mathcal{T}_\Sigma\}.
\end{align}
Decompose $\Omega$ into two overlapping extended subdomains:
\begin{equation}
\Omega_h^{\pm} 
:= \bigcup \{ K \in \mathcal{T}_h : K \cap \Omega^{\pm} \neq \emptyset \}.
\end{equation}
Since $\Omega_h^+ \cap \Omega_h^- = \bigcup_{K\in\mathcal{T}_\Sigma}K \neq \emptyset$, 
these are referred to as \emph{fictitious domains}. 
We further distinguish the non-interface elements within each subdomain:
\begin{equation}\label{eq:sub_omega}
    \omega_h^{\pm} 
    := \bigcup\{ K \in \mathcal{T}_h : K \subset \Omega^{\pm} \}.
\end{equation}

For each cut element $K$, set $K^\pm := K\cap\Omega^\pm$. We use the cut-dependent weighted average across $\Sigma$ from~\cite{sticko2020high}:
\begin{equation}\label{eq:averange}
\{\bm{v}\}_\theta := \theta_K^+ \bm{v}^+ + \theta_K^- \bm{v}^-,
\qquad
\theta_K^+ := \frac{\alpha^- |K^+|}
{\alpha^- |K^+|+\alpha^+ |K^-|},
\quad
\theta_K^- := \frac{\alpha^+ |K^-|}
{\alpha^- |K^+|+\alpha^+ |K^-|},
\end{equation}
where $\alpha^\pm=1/\varepsilon_\pm$, $|K^\pm|$ denotes the measure of $K^\pm$, and $\theta_K^\pm>0$ with $\theta_K^+ + \theta_K^- = 1$.
Fig.~\ref{fig:domain_decomp} illustrates the two-dimensional decomposition and the fictitious domain.

\begin{figure}[htbp]
    \centering
    \begin{subfigure}[b]{0.32\textwidth}
        \centering
        \includegraphics[width=\textwidth]{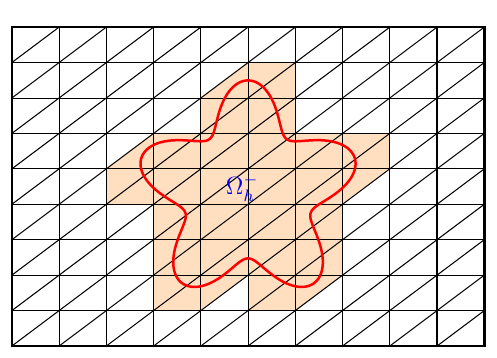}
    \end{subfigure}
    \hfill
    \begin{subfigure}[b]{0.32\textwidth}
        \centering
        \includegraphics[width=\textwidth]{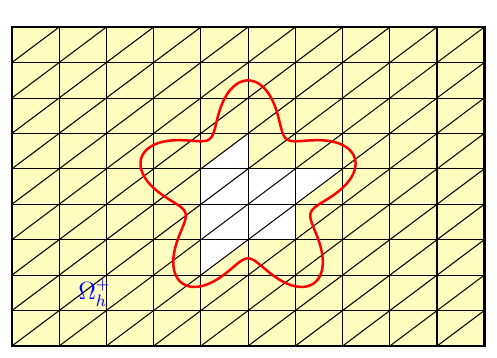}
    \end{subfigure}
    \hfill
    \begin{subfigure}[b]{0.32\textwidth}
        \centering
        \includegraphics[width=\textwidth]{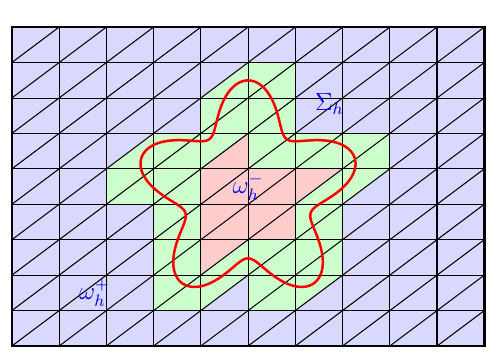}
    \end{subfigure}

    \caption{Visualization of the fictitious domain decomposition.
    (a) Extended subdomain $\Omega_h^-$ (orange);
    (b) Extended subdomain $\Omega_h^+$ (yellow);
    (c) Restricted subdomain $\omega_h^-$ (red), restricted subdomain $\omega_h^+$ (purple), and interface elements $\mathcal{T}_\Sigma$ (green).}
    \label{fig:domain_decomp}
\end{figure}

We employ the second family of N\'{e}d\'{e}lec edge elements~\cite{chen2016finite, nedelec1980mixed,nedelec1986new}. On an element $K \in \mathcal{T}_h$ with barycentric coordinates $\{\lambda_i\}$, the basis functions and degrees of freedom for the $k$-th edge $\bm{e}_k$ connecting vertices $(i,j)$ with unit tangent $\bm{t}$ are
\begin{alignat}{2}
    \boldsymbol{\phi}_k &= \lambda_i \nabla\lambda_j - \lambda_j \nabla\lambda_i,
    &\quad
    l_k(\bm{v}) &= \int_{\bm{e}_k} \bm{v} \cdot \bm{t}\, ds
    \approx \tfrac{1}{2}\bigl[\bm{v}(i) + \bm{v}(j)\bigr] \cdot \bm{e}_k,
    \\
    \boldsymbol{\psi}_k &= \lambda_i \nabla\lambda_j + \lambda_j \nabla\lambda_i,
    &\quad
    l^1_k(\bm{v}) &= 3\int_{\bm{e}_k} \bm{v} \cdot \bm{t}
    (\lambda_i - \lambda_j)\, ds
    \approx \tfrac{1}{2}\bigl[\bm{v}(i) - \bm{v}(j)\bigr] \cdot \bm{e}_k.
\end{alignat}
The second-family N\'{e}d\'{e}lec space on $K$ is
\begin{equation}
    \mathbf{NE}^1(K) = \mathrm{span}\{\boldsymbol{\phi}_k,\, \boldsymbol{\psi}_k 
    : k=1,\ldots,\frac{d(d+1)}{2}\}.
\end{equation}
For each $\bm{k}\in\mathcal{B}$, the modified N\'{e}d\'{e}lec spaces on each fictitious domain are defined as subspaces of $\bm{V}$:
\begin{equation}\label{eq:Vhm}
  \bm{V}_h^m
    := \left\{
         \bm{v}_h^m \in \bm{H}(\mathrm{curl};\Omega_h^m) :
         e^{i\bm{k}\cdot\bm{x}}\bm{v}_h^m\big|_K
         \in \mathbf{NE}^1(K),\;
         \forall\, K\cap\Omega^m\neq\emptyset,\; m=\pm
       \right\}.
\end{equation}
The global extended space and its periodic subspace are
\begin{align}
  \bm{V}_h^{\mathrm{ext}}
    &:= \left\{
         \bm{v}_h = (\bm{v}_h^+,\bm{v}_h^-) :
         \bm{v}_h^m\in\bm{V}_h^m,\; m=\pm
       \right\}
     \subset \bm{V},
  \label{eq:Vhext}\\
  \bm{V}_{h,\mathrm{per}}^{\mathrm{ext}}
    &:= \left\{
         \bm{v}_h\in\bm{V}_h^{\mathrm{ext}} :
         \bm{v}_h^m(\bm{x}+\bm{R})=\bm{v}_h^m(\bm{x})
         \text{ on }\partial\Omega,\;\bm{R}\in A
       \right\}.
  \label{eq:Vhext_per}
\end{align}
For any interface element $K\in\mathcal{T}_\Sigma$, two independent sets of basis functions are associated with $K$: one from $\bm{V}_h^+$ and one from $\bm{V}_h^-$, corresponding to the degrees of freedom on each side of the interface.
\begin{remark}
  The condition $e^{i\bm{k}\cdot\bm{x}}\bm{v}_h^m|_K\in\mathbf{NE}^1(K)$
  in~\eqref{eq:Vhm} means that the basis functions of $\bm{V}_h^m$ are of the form $e^{-i\bm{k}\cdot\bm{x}}\bm{\phi}$, where $\bm{\phi}$ ranges over $\mathbf{NE}^1(K)$.  The degrees of freedom, mesh
  structure, and dimension are identical to those of the standard N\'{e}d\'{e}lec space.
\end{remark}
Similarly, the modified local $P^2$-conforming spaces and their global extended space are
\begin{align}
  P_h^m
    &:= \left\{ q_h^m \in H^1(\Omega_h^m) : e^{i\bm{k}\cdot\bm{x}}q_h^m|_K \in P^2(K),\; \forall\, K\cap\Omega^m\neq\emptyset,\; m=\pm\right\},
  \label{eq:Phm}\\
  \bm{P}_h^{\mathrm{ext}}
    &:= \left\{
         \bm{q}_h = (q_h^+,q_h^-) :
         q_h^m\in P_h^m,\; m=\pm
       \right\}
     \subset P,
  \label{eq:Phext}\\
  \bm{P}_{h,\mathrm{per}}^{\mathrm{ext}}
    &:= \left\{
         \bm{q}_h\in\bm{P}_h^{\mathrm{ext}} :
         q_h^m(\bm{x}+\bm{R})=q_h^m(\bm{x})
         \text{ on }\partial\Omega,\;\bm{R}\in A,\;m=\pm
       \right\}.
  \label{eq:Phext_per}
\end{align}

Recall the standard discrete de Rham complex \cite{brezzi2012mixed, monk2003finite}:
\begin{equation}\label{eq:discrete-derham}
\begin{array}{ccccccccc}
H^1(\Omega)
& \xrightarrow{\nabla} & \bm{H}(\mathrm{curl};\Omega)
& \xrightarrow{\nabla\times} & \bm{H}(\mathrm{div};\Omega)
& \xrightarrow{\nabla\cdot} & L^2(\Omega) \\[0.4em]
\Big\downarrow\tilde{\Pi}_h^{\mathrm{grad}}
&& \Big\downarrow\tilde{\Pi}_h^{\mathrm{curl}}
&& \Big\downarrow\tilde{\Pi}_h^{\mathrm{div}}
&&\Big\downarrow\tilde{\Pi}_h^{0}
\\[0.4em]
P^2
& \xrightarrow{\nabla} & \mathbf{NE}^1
& \xrightarrow{\nabla\times} &\mathbf{RT}
& \xrightarrow{\nabla\cdot} &P^0
\end{array}
\end{equation}
The interpolation operators for the original (unshifted) spaces $\tilde{\bm{V}}_h^{\mathrm{ext}}$ and $\tilde{\bm{P}}_h^{\mathrm{ext}}$ are denoted
\[
  \tilde{\Pi}_h^{\mathrm{curl}}
    : \bm{H}(\mathrm{curl};\Omega_h^\pm) \to \tilde{\bm{V}}_h^{\mathrm{ext}},
  \qquad
  \tilde{\Pi}_h^{\mathrm{grad}}
    : H^1(\Omega_h^\pm) \to \tilde{\bm{P}}_h^{\mathrm{ext}},
\]
and satisfy the standard commuting relation~\eqref{eq:discrete-derham}:
\begin{equation}\label{eq:standard_comm}
  \tilde{\Pi}_h^{\mathrm{curl}}  \nabla
  = \nabla  \tilde{\Pi}_h^{\mathrm{grad}}.
\end{equation}
The corresponding modified interpolants acting on $\bm{V}$ and $P$
are defined by conjugation with $\Phi_{\bm{k}}$:
\begin{equation}\label{eq:bloch_interp}
  \Pi_h^{\mathrm{curl}}
    := \Phi_{\bm{k}}^{-1}  \tilde{\Pi}_h^{\mathrm{curl}}  \Phi_{\bm{k}},
  \qquad
  \Pi_h^{\mathrm{grad}}
    := \Phi_{\bm{k}}^{-1}  \tilde{\Pi}_h^{\mathrm{grad}}  \Phi_{\bm{k}}.
\end{equation}
\begin{remark}
The discrete de Rham complex \eqref{eq:discrete-derham} is stated in three-dimensional form. In the two-dimensional case, the corresponding structure is simpler since $\mathrm{curl}\,\bm v = \partial_x v_2-\partial_y v_1$. Accordingly, for the second-family N\'ed\'elec element of degree one, the relevant local sequence may be viewed as $P^2 \xrightarrow{\nabla} \mathbf{NE}^1 \xrightarrow{\mathrm{curl}} P^0$; see, e.g., \cite{monk2003finite, arnold2018finite} for details.
\end{remark}

The following modified commuting diagram provides a key tool for the subsequent stability analysis of our proposed method in the next section.
\begin{lemma}[Modified commuting diagram]
\label{lem:commuting}
For $\bm{k} \in \mathcal{B}$, the modified spaces and interpolants defined
in~\eqref{eq:Vhext_per}--\eqref{eq:Phext_per} and~\eqref{eq:bloch_interp} satisfy the commuting diagram
\begin{equation}\label{eq:bloch_comm_diag}
\begin{array}{ccccc}
P
& \xrightarrow{\;\nabla_{\bm{k}}\;} & \bm{V} \\[0.7em] \Big\downarrow\,\Pi_h^{\mathrm{grad}}
&& \Big\downarrow\,\Pi_h^{\mathrm{curl}} \\[0.5em] \bm{P}_{h,\mathrm{per}}^{\mathrm{ext}}
& \xrightarrow{\;\nabla_{\bm{k}}\;} & \bm{V}_{h,\mathrm{per}}^{\mathrm{ext}}
\end{array}
\end{equation}
that is,
\begin{equation}\label{eq:bloch_comm}
  \Pi_h^{\mathrm{curl}}  \nabla_{\bm{k}}
  = \nabla_{\bm{k}}  \Pi_h^{\mathrm{grad}}.
\end{equation}
Consequently, for every $q_h \in \bm{P}_{h,\mathrm{per}}^{\mathrm{ext}}$:
\begin{equation}\label{eq:comm_fe}
  \Pi_h^{\mathrm{curl}}(\nabla_{\bm{k}} q_h)
  = \nabla_{\bm{k}} q_h
  \qquad\text{and}\qquad
  \nabla_{\bm{k}} q_h \in \bm{V}_{h,\mathrm{per}}^{\mathrm{ext}}.
\end{equation}
\end{lemma}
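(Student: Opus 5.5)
The plan is to reduce \eqref{eq:bloch_comm} to the standard relation \eqref{eq:standard_comm} by conjugating with the Bloch map, using the intertwining identity \eqref{eq:bloch_grad}. For $q\in P$, write $\tilde q:=\Phi_{\bm k}q\in\tilde P$. Then $\nabla_{\bm k}q=\Phi_{\bm k}^{-1}\nabla\tilde q$, and by the definition \eqref{eq:bloch_interp},
\begin{equation*}
\Pi_h^{\mathrm{curl}}\nabla_{\bm k}q
=\Phi_{\bm k}^{-1}\tilde\Pi_h^{\mathrm{curl}}\Phi_{\bm k}\Phi_{\bm k}^{-1}\nabla\tilde q
=\Phi_{\bm k}^{-1}\tilde\Pi_h^{\mathrm{curl}}\nabla\tilde q
=\Phi_{\bm k}^{-1}\nabla\tilde\Pi_h^{\mathrm{grad}}\tilde q .
\end{equation*}
Applying \eqref{eq:bloch_grad} once more, with the function $\Phi_{\bm k}^{-1}\tilde\Pi_h^{\mathrm{grad}}\tilde q=\Pi_h^{\mathrm{grad}}q$, gives $\Phi_{\bm k}^{-1}\nabla\Phi_{\bm k}\Pi_h^{\mathrm{grad}}q=\nabla_{\bm k}\Pi_h^{\mathrm{grad}}q$. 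This establishes \eqref{eq:bloch_comm}.

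The argument is carried out subdomain by subdomain. On each fictitious domain $\Omega_h^m$, the standard interpolants act elementwise on the extension of $\tilde q^m$. Because the Bloch multiplication is a smooth scalar factor, it preserves the piecewise $H^1$ and $\bm H(\mathrm{curl})$ regularity, so every step above holds on each $\Omega_h^\pm$ separately. As is implicit in the definition of the interpolants, sufficient regularity is assumed for the degrees of freedom to be well defined.

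Next I would check that the diagram maps into the periodic discrete spaces. By construction, $\Pi_h^{\mathrm{grad}}q$ satisfies $e^{i\bm k\cdot\bm x}\Pi_h^{\mathrm{grad}}q|_K\in P^2(K)$, and $\Pi_h^{\mathrm{curl}}$ behaves analogously with $\mathbf{NE}^1(K)$. For periodicity, I would argue that the images lie in the periodic spaces \eqref{eq:Vhext_per} and \eqref{eq:Phext_per}. I expect this to be the main obstacle, because $\Phi_{\bm k}q$ is quasi-periodic rather than periodic. The route I would try is to use the uniform mesh, which is translation invariant under $A$. Lattice-translated boundary degrees of freedom of $\tilde q$ then differ by the constant factor $e^{i\bm k\cdot\bm R}$. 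That factor cancels after multiplication by $e^{-i\bm k\cdot\bm x}$, so boundary values of the interpolant are identified exactly as in the direct identification of degrees of freedom used to build the periodic spaces.

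For the consequence \eqref{eq:comm_fe}, take $q_h\in\bm P_{h,\mathrm{per}}^{\mathrm{ext}}$, so that $\tilde q_h:=\Phi_{\bm k}q_h$ is elementwise $P^2$ and continuous on each $\Omega_h^m$. Then $\nabla\tilde q_h$ is elementwise in $\nabla P^2\subset\mathbf{NE}^1$ and tangentially continuous, hence lies in $\tilde{\bm V}_h^{\mathrm{ext}}$. It follows that $\nabla_{\bm k}q_h=\Phi_{\bm k}^{-1}\nabla\tilde q_h\in\bm V_h^{\mathrm{ext}}$, and periodicity is inherited from that of $q_h$ together with the periodicity of $\bm k$-multiplication. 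Since $\tilde\Pi_h^{\mathrm{grad}}$ reproduces $P^2$, we have $\Pi_h^{\mathrm{grad}}q_h=q_h$. Substituting into \eqref{eq:bloch_comm} gives $\Pi_h^{\mathrm{curl}}\nabla_{\bm k}q_h=\nabla_{\bm k}q_h$.
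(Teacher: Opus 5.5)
Your proposal is correct and follows the paper's proof. Both arguments conjugate with $\Phi_{\bm k}$ and combine \eqref{eq:bloch_grad} with the standard relation \eqref{eq:standard_comm} to obtain \eqref{eq:bloch_comm}, and both then use $\Pi_h^{\mathrm{grad}}q_h=q_h$ for the consequence. The differences are minor: you check $\nabla_{\bm k}q_h\in\bm V_{h,\mathrm{per}}^{\mathrm{ext}}$ directly from $\nabla P^2\subset\mathbf{NE}^1$, whereas the paper reads it off the range of $\Pi_h^{\mathrm{curl}}$; and you explicitly track the $e^{i\bm k\cdot\bm R}$ phase of the quasi-periodic $\Phi_{\bm k}q$, which the paper passes over when it asserts $\Phi_{\bm k}q_h\in\tilde{\bm P}_{h,\mathrm{per}}^{\mathrm{ext}}$.
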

\begin{proof}
The proof reduces the shifted statement to the unshifted one via the Bloch identities~\eqref{eq:bloch_grad}--\eqref{eq:bloch_curl}, following \cite[Theorem~4.1]{dobson2001analysis}. From Definition~\ref{def:bloch_map}, for any smooth $q$ and $\bm{v}$:
\begin{align}
  \Phi_{\bm{k}}(\nabla_{\bm{k}} q)
    &= \nabla(\Phi_{\bm{k}} q),
  \label{eq:id1}\\
  \Phi_{\bm{k}}(\nabla_{\bm{k}}\times\bm{v})
    &= \nabla\times(\Phi_{\bm{k}}\bm{v}).
  \label{eq:id2}
\end{align}
Let $q \in P$. Using~\eqref{eq:standard_comm}, \eqref{eq:bloch_interp}, and~\eqref{eq:id1}:
\begin{align*}
  \Pi_h^{\mathrm{curl}}(\nabla_{\bm{k}} q)
  &= \Phi_{\bm{k}}^{-1}\Bigl(
     \tilde{\Pi}_h^{\mathrm{curl}}\bigl(\Phi_{\bm{k}}(\nabla_{\bm{k}} q)\bigr)
     \Bigr)
   = \Phi_{\bm{k}}^{-1}\Bigl(
     \tilde{\Pi}_h^{\mathrm{curl}}\bigl(\nabla(\Phi_{\bm{k}} q)\bigr)
     \Bigr)
    \\
  &= \Phi_{\bm{k}}^{-1}\Bigl(
     \nabla\,\tilde{\Pi}_h^{\mathrm{grad}}(\Phi_{\bm{k}} q)
     \Bigr)
   = \nabla_{\bm{k}}\Bigl(
     \Phi_{\bm{k}}^{-1}\tilde{\Pi}_h^{\mathrm{grad}}(\Phi_{\bm{k}} q)
     \Bigr)
     = \nabla_{\bm{k}}(\Pi_h^{\mathrm{grad}} q).
\end{align*}
This gives~\eqref{eq:bloch_comm}.
For~\eqref{eq:comm_fe}, let $q_h \in \bm{P}_{h,\mathrm{per}}^{\mathrm{ext}}$. By definition~\eqref{eq:Phext_per}, $\Phi_{\bm{k}} q_h \in \tilde{\bm{P}}_{h,\mathrm{per}}^{\mathrm{ext}}$,
so the interpolant fixes it:
$\tilde{\Pi}_h^{\mathrm{grad}}(\Phi_{\bm{k}} q_h) = \Phi_{\bm{k}} q_h$, hence $\Pi_h^{\mathrm{grad}} q_h = q_h$.
And~\eqref{eq:bloch_comm} gives
\[
  \Pi_h^{\mathrm{curl}}(\nabla_{\bm{k}} q_h)
  = \nabla_{\bm{k}}(\Pi_h^{\mathrm{grad}} q_h)
  = \nabla_{\bm{k}} q_h.
\]
Since $\Pi_h^{\mathrm{curl}}$ maps into $\bm{V}_{h,\mathrm{per}}^{\mathrm{ext}}$
by~\eqref{eq:bloch_interp} and~\eqref{eq:Vhext_per},
we conclude $\nabla_{\bm{k}} q_h \in \bm{V}_{h,\mathrm{per}}^{\mathrm{ext}}$.
\end{proof}

\subsection{Mixed Unfitted Approximation}
\label{sec:Mixedmethod}
We define the sesquilinear form
$a_h(\cdot,\cdot):\;
\bm{V}_{h,\mathrm{per}}^{\mathrm{ext}}
\times \bm{V}_{h,\mathrm{per}}^{\mathrm{ext}}
\longrightarrow \mathbb{C}$
as follows:
\begin{equation}\label{eq:nitche_ah}
\begin{aligned}
a_h(\bm u_h,\bm v_h)
=&\sum_{m} \int_{\Omega^m}(\varepsilon_{m} ^{-1} \nabla_{\bm{k}} \times \bm{u}_h) \cdot \overline{(\nabla_{\bm{k}} \times \bm{v}_h)} \, d\Omega 
+ \int_{\Sigma} \{ \varepsilon^{-1} \nabla_{\bm{k}} \times \bm{u}_h \}_{\theta} \cdot [\![ \overline{(\bm{v}_h \times \bm{n})}]\!] \, dS 
\\
&+ \int_{\Sigma} \{ \varepsilon^{-1} \nabla_{\bm{k}} \times \bm{v}_h \}_{\theta} \cdot [\![ \overline{(\bm{u}_h \times \bm{n})}]\!] \, dS
+ \frac{\gamma}{h}\int_{\Sigma}[\![\bm{u}_h \times \textbf{n}]\!]\cdot \overline{[\![\bm{v}_h \times \textbf{n}]\!]} \, dS,
\end{aligned}
\end{equation}
\begin{align}
b_h(p_h, \bm{v}_h) =&
\sum_{m = \pm} \int_{\Omega^m} (\nabla_{\bm{k}} p_h^m) \cdot \overline{\bm{v}_h^m} \, dx 
- \int_{\Sigma} [\![p_h]\!]\,\{\overline{\bm{v}_h\cdot\bm{n}}\}_{\theta}\,ds,
\end{align}
\begin{align}
(\bm{u}_h, \bm{v}_h) =& \int_{\Omega^{m}} \bm{u}_h \cdot \overline{\bm{v}_h} \, dx, \\
J_h(p_h,q_h) =& \frac{\gamma_q}{h}\int_{\Sigma} [\![ p_h ]\!] \cdot \overline{[\![ q_h ]\!]}\,ds.\label{eq:interface-penalty}
\end{align}
Here $\{\cdot\}_{\theta}$ is the weighted average across the interface introduced in \eqref{eq:averange}, \(J_h\) is an interface jump stabilization, and $\mathcal F_h^g$ is the set of ghost faces defined in \eqref{eq:Fh_Sigma}.
For $m=\pm$, let $\mathcal F_h^{g,m}$ consist of the faces in
$\mathcal F_h^g$ whose two adjacent elements both belong to the active
mesh on $\Omega_h^m$. The ghost penalties act separately on the two
material branches. With $h_F=\operatorname{diam}(F)$ and a fixed unit
normal $\bm n_F$ on each face, we define
\begin{align}
g_u(\bm u_h,\bm v_h) 
&:= \sum_{m=\pm} \sum_{F\in\mathcal F_h^{g,m}} \gamma_u h_F \int_F [\![(\nabla_{\bm k}\times \bm u_h^m)]\!]_F
\cdot \overline{ [\![(\nabla_{\bm k}\times \bm v_h^m)]\!]_F }\,dS ,\label{eq:ghost-u}\\
g_p(p_h,q_h)
&:= \sum_{m=\pm}\sum_{j=0}^{1} \sum_{F\in\mathcal F_h^{g,m}}
\gamma_p h_F^{2j+1} \int_F [\![\partial_{n_F}^j(\nabla_{\bm k}p_h^m)]\!]_F
\cdot \overline{ [\![\partial_{n_F}^j(\nabla_{\bm k}q_h^m)]\!]_F }\,dS .\label{eq:ghost-p}
\end{align}
Here $[\![\cdot]\!]_F$ denotes the jump across $F$ within the same
material branch, and $\gamma_u,\gamma_p>0$ are fixed independently of $h$
and the cut configuration. 
Denote
\begin{align}
A_h(\bm{u}_h, \bm{v}_h) =& a_h(\bm u_h,\bm v_h)+ g_u(\bm u_h,\bm v_h), \\
S_h(p_h,q_h) &= J_h(p_h,q_h) + g_p(p_h,q_h),\label{eq:stabilization-p}
\end{align}
Then, for a given quasi-momentum \( \bm{k} \in \mathcal{B} \), the proposed MUFEM discretization, with the interface conditions imposed by Nitsche's method, is defined as follows:
find $(\omega_h ^{2}, \bm{u}_h, p_h) \in \mathbb{R}^{+} \times \bm{V}_{h,\mathrm{per}}^{\mathrm{ext}} \times \bm{P}_{h,\mathrm{per}}^{\mathrm{ext}}$ such that:
\begin{equation}\label{eq:final}
\left\{
\begin{aligned}
A_h(\bm{u}_h, \bm{v}_h) + b_h(p_h, \bm{v}_h) &= \omega_h^2 (\bm{u}_h, \bm{v}_h)
&& \forall \bm{v}_h \in \bm{V}_{h,\mathrm{per}}^{\mathrm{ext}}, \\
\overline{b_h(q_h, \bm{u}_h)} - S_h(p_h,q_h) &= 0
&& \forall q_h \in \bm{P}_{h,\mathrm{per}}^{\mathrm{ext}}.
\end{aligned}
\right.
\end{equation}
Equivalently, we define the global sesquilinear form on $\bigl(\bm V_{h,\mathrm{per}}^{\mathrm{ext}}\times P_{h,\mathrm{per}}^{\mathrm{ext}}\bigr)\times \bigl(\bm V_{h,\mathrm{per}}^{\mathrm{ext}}\times P_{h,\mathrm{per}}^{\mathrm{ext}}\bigr)\rightarrow \mathbb C$ by
\begin{equation}\label{eq:mathcal_Ah}
\mathcal{A}_h[(\bm u_h,p_h),(\bm v_h,q_h)]
:=
A_h(\bm u_h,\bm v_h)
+b_h(p_h,\bm v_h)
+\overline{b_h(q_h,\bm u_h)}
-S_h(p_h,q_h).
\end{equation}
Then the discrete eigenvalue problem \eqref{eq:final} can be written as: find
$(\omega_h^2,\bm u_h,p_h)
\in \mathbb R_+
\times
\bm V_{h,\mathrm{per}}^{\mathrm{ext}}
\times
P_{h,\mathrm{per}}^{\mathrm{ext}}$,
with $\bm u_h\neq 0$, such that
\begin{equation}\label{eq:compact-discrete-eigenproblem}
\mathcal A_h[(\bm u_h,p_h),(\bm v_h,q_h)]
=
\omega_h^2(\bm u_h,\bm v_h)
\qquad
\forall
(\bm v_h,q_h)
\in
\bm V_{h,\mathrm{per}}^{\mathrm{ext}}
\times
P_{h,\mathrm{per}}^{\mathrm{ext}}.
\end{equation}

%% file: section4.tex
\section{Stability}\label{sec:stability}
Throughout this section, the quasi-momentum
$\bm{k}\in\mathcal{B}\setminus\{\bm{0}\}$ is fixed. To show the stability of
the unfitted method, for any $(\bm v_h,q_h)\in \bm
V_{h,\mathrm{per}}^{\mathrm{ext}}\times \bm
P_{h,\mathrm{per}}^{\mathrm{ext}}$, we define the following norms and
seminorms.
\begin{equation}\label{eq:seminorm_V_h}
|\bm v_h|_{\bm V_h}^2
:=
\sum_{m=\pm}\left\| \varepsilon_m^{-1/2}\nabla_{\bm k}\times \bm v_h\right\|_{0,\Omega_h^m}^2  
+ \sum_{K\in \mathcal T_\Sigma} \frac{\gamma}{h}\left\| [\![\bm v_h\times \bm n]\!] \right\|_{0,\Sigma_K}^2 + g_u(\bm v_h,\bm v_h),
\end{equation}
\begin{equation}\label{eq:norm_V_h}
\|\bm v_h\|_{\bm V_h}^2
:= |\bm v_h|_{\bm V_h}^2+\sum_{m=\pm} \left\|\bm v_h^m \right\|_{0,\Omega_h^m }^2 
,
\end{equation}
\begin{equation}\label{eq:all_Vh_Qh}
\|q_h\|_{Q_h}^2 
:=
\sum_{m=\pm}\left\| \nabla_{\bm k} q_h^m \right\|_{0,\Omega_h^m}^2
+ \sum_{K\in \mathcal T_\Sigma}\frac{1}{h}\left\|[\![q_h]\!]\right\|_{0,\Sigma_K}^2 + g_p( q_h, q_h).
\end{equation}
Accordingly, the product norm on
$\bm V_{h,\mathrm{per}}^{\mathrm{ext}}\times \bm P_{h,\mathrm{per}}^{\mathrm{ext}}$
is defined by
\begin{equation}\label{eq:product-norm}
\|(\bm v_h,q_h)\|_h^2
:= \|\bm v_h\|_{\bm V_h}^2+\|q_h\|_{Q_h}^2.
\end{equation}

For a fixed smooth interface and sufficiently small $h$,
the trace inequality holds uniformly in the cut position.

\begin{lemma}\label{lemma:shifted-trace-curl}
For each interface element $K\in\mathcal T_\Sigma$ and any $\bm v_h\in \bm V_h^m$, $m=\pm$, there is $C>0$, independent of $h$ and the cut position, with
\begin{equation}\label{eq:shifted-trace-curl}
\left\| \nabla_{\bm k}\times \bm v_h^m \right\|_{0,\Sigma_K}^2
\le C h^{-1}\left\| \nabla_{\bm k}\times \bm v_h^m \right\|_{0,K}^2 .
\end{equation}
\end{lemma}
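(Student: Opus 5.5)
The plan is to reduce the estimate to a trace inequality for polynomials on a cut element. The key point is that $\nabla_{\bm k}\times\bm v_h^m$ is, up to the unimodular factor $e^{-i\bm k\cdot\bm x}$, a polynomial of fixed degree on $K$. By \eqref{eq:bloch_curl} and \eqref{eq:Vhm} we have $\bm v_h^m|_K=e^{-i\bm k\cdot\bm x}\bm\phi$ with $\bm\phi\in\mathbf{NE}^1(K)$, and therefore
\[
\nabla_{\bm k}\times\bm v_h^m = e^{-i\bm k\cdot\bm x}\,\nabla\times\bm\phi \quad\text{on } K .
\]
Here $\nabla\times\bm\phi$ is a (vector) polynomial of degree at most one; it is in fact constant for the second-family space of degree one, since the $\bm\psi_k$ are gradients. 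Because $|e^{-i\bm k\cdot\bm x}|=1$ pointwise, both sides of \eqref{eq:shifted-trace-curl} coincide with the corresponding norms of $w:=\nabla\times\bm\phi$. It therefore suffices to prove
\[
\|w\|_{0,\Sigma_K}^2\le C h^{-1}\|w\|_{0,K}^2 \qquad \text{for all } w\in[P^1(K)]^{d'} ,
\]
where $d'=1$ or $d$. Note that the volume norm is taken on the whole element $K$, not on the cut part $K\cap\Omega^m$. This is exactly what makes a cut-independent bound possible.

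The polynomial estimate follows by combining an inverse inequality with a bound on the interface measure. We first write
\[
\|w\|_{0,\Sigma_K}^2\le |\Sigma_K|\,\|w\|_{L^\infty(K)}^2 .
\]
By norm equivalence on the finite-dimensional space $P^1(\hat K)$ together with the affine scaling to $K$, shape regularity gives $\|w\|_{L^\infty(K)}^2\le C|K|^{-1}\|w\|_{0,K}^2\le C h^{-d}\|w\|_{0,K}^2$. It then remains to show $|\Sigma_K|\le C h^{d-1}$ uniformly in how $\Sigma$ cuts $K$.

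The main obstacle is this measure bound, and it is where the assumptions of a fixed smooth interface and sufficiently small $h$ enter. The approach is to cover $\Sigma$ by finitely many coordinate patches in which it is a $C^2$ graph with bounded curvature. For $h$ below the patch scale, $\Sigma\cap K$ is contained in a single graph over a $(d-1)$-dimensional domain of diameter at most $h$, with bounded slope. Its area is then at most $C h^{d-1}$, with $C$ depending only on the curvature bound and shape regularity, and not on the cut position. Alternatively, one can invoke the standard cut-element trace inequality $\|w\|_{0,\Sigma_K}\le C(h^{-1/2}\|w\|_{0,K}+h^{1/2}\|\nabla w\|_{0,K})$, proved via the divergence theorem on $K^\pm$, and absorb the gradient term by the inverse estimate $\|\nabla w\|_{0,K}\le Ch^{-1}\|w\|_{0,K}$. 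Either route yields \eqref{eq:shifted-trace-curl} with $C$ independent of $h$ and of the cut position, and also independent of $\bm k$, thanks to the unimodularity step.
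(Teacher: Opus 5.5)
Your proposal is correct. The reduction step is exactly the paper's: you write $\bm v_h^m|_K=e^{-i\bm k\cdot\bm x}\bm\phi$ with $\bm\phi\in\mathbf{NE}^1(K)$, use \eqref{eq:bloch_curl}, and pass to $w=\nabla\times\bm\phi$ by unimodularity.

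From that point the paper follows your \emph{alternative} route, but more economically. It applies the cut-element trace inequality of Burman et al.\ to $z=\nabla\times\bm\phi$. It then observes that $z$ is constant on $K$, because $\mathbf{NE}^1(K)=[P^1(K)]^d$, so $\nabla z=0$. The term $h\|\nabla z\|_{0,K}^2$ therefore simply vanishes, and no inverse estimate is needed.

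Your primary route, an $L^\infty$--$L^2$ inverse estimate combined with the measure bound $|\Sigma_K|\le Ch^{d-1}$, is genuinely different and more elementary. It bypasses the $H^1$-level cut trace inequality. It also exposes the only real input: a geometric bound on how much of a fixed smooth interface can lie inside one element. The same assumption is hidden inside the lemma the paper cites.

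Since you already noted that $w$ is constant, your route reduces to an exact identity:
\[
\|w\|_{0,\Sigma_K}^2=\frac{|\Sigma_K|}{|K|}\,\|w\|_{0,K}^2 .
\]
So the lemma is equivalent to $|\Sigma_K|\le Ch^{-1}|K|$, and you do not even need the inverse estimate. With $|K|\sim h^d$ on the uniform mesh, this is precisely your patch-wise graph argument.

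The trade-off is scope. Your argument is tied to polynomial (here constant) fields and to quasi-uniformity. The cited trace inequality holds on all of $H^1(K)$, although that generality is not needed for this particular lemma.
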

\begin{proof}
By~\eqref{eq:Vhm},
$e^{i\bm k\cdot\bm x}\bm v_h^m|_K
\in\mathbf{NE}^1(K)=[P^1(K)]^d$.
Hence, $z:=e^{i\bm k\cdot\bm x}\nabla_{\bm k}\times\bm v_h^m
=\nabla\times\bigl(e^{i\bm k\cdot\bm x}\bm v_h^m\bigr)$ is constant on $K$ by~\eqref{eq:bloch_curl}.
On the uniform mesh, the cut-element trace inequality~\cite{burman2015cutfem}
and $\nabla z=0$ give
\[
\|z\|_{0,\Sigma_K}^2
\le C\left(h^{-1}\|z\|_{0,K}^2+h\|\nabla z\|_{0,K}^2\right)
=Ch^{-1}\|z\|_{0,K}^2.
\]
Since $|e^{i\bm k\cdot\bm x}|=1$, this proves~\eqref{eq:shifted-trace-curl},
with $C$ independent of the cut position and $\bm k$.
\end{proof}

Following~\cite{gurkan2019stabilized}, for each $m=\pm$, we call
$K\in\mathcal T_h$ active if $K\cap\Omega^m\ne\emptyset$.
The union of these elements is $\Omega_h^m$.
We assume that each active element is connected through faces in
$\mathcal F_h^{g,m}$ to an active element $K_*$ satisfying
$|K_*\cap\Omega^m|\ge c_*|K_*|$.
The path lengths and overlap are uniformly bounded, and $c_*>0$ is
independent of $h$ and the cut position.

\begin{lemma}\label{lemma:trace_equ}
Under the above mesh assumptions, for every
$\bm v_h\in\bm V_{h,\mathrm{per}}^{\mathrm{ext}}$, there is
$C_{\mathrm{tr}}>0$, independent of $h$ and the cut position, such that
\begin{equation}\label{eq:trace1}
\sum_{K\in\mathcal T_\Sigma}h
\left\|\left\{\varepsilon^{-1}\nabla_{\bm k}\times\bm v_h
\right\}_\theta\right\|_{0,\Sigma_K}^2
\le C_{\mathrm{tr}}\left(
\sum_{m=\pm}\left\|\varepsilon_m^{-1/2}
\nabla_{\bm k}\times\bm v_h^m\right\|_{0,\Omega^m}^2
+g_u(\bm v_h,\bm v_h)\right).
\end{equation}
\end{lemma}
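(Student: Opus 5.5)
The proof proceeds element by element on $\mathcal T_\Sigma$ and is then summed. On a cut element $K$ we split the weighted average and use $(a+b)^2\le 2a^2+2b^2$:
\[
h\|\{\varepsilon^{-1}\nabla_{\bm k}\times\bm v_h\}_\theta\|_{0,\Sigma_K}^2
\le 2h\sum_{m=\pm}(\theta_K^m)^2\varepsilon_m^{-2}\|\nabla_{\bm k}\times\bm v_h^m\|_{0,\Sigma_K}^2 .
\]
Lemma~\ref{lemma:shifted-trace-curl} then bounds each term by $C(\theta_K^m)^2\varepsilon_m^{-2}\|\nabla_{\bm k}\times\bm v_h^m\|_{0,K}^2$. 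This is a norm over the whole element $K$, not only over $K^m=K\cap\Omega^m$. The rest of the argument converts it into the physical-domain norm plus the ghost penalty, and controls the weights uniformly in the cut position.

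\textbf{Step 1 (ghost penalty extension).} We show that for each $m$
\[
\sum_{K\in\mathcal T_\Sigma}\|\nabla_{\bm k}\times\bm v_h^m\|_{0,K}^2\le C\Bigl(\|\nabla_{\bm k}\times\bm v_h^m\|_{0,\Omega^m}^2+\sum_{F\in\mathcal F_h^{g,m}}h_F\|[\![\nabla_{\bm k}\times\bm v_h^m]\!]_F\|_{0,F}^2\Bigr).
\]
Set $z^m:=e^{i\bm k\cdot\bm x}\nabla_{\bm k}\times\bm v_h^m=\nabla\times(e^{i\bm k\cdot\bm x}\bm v_h^m)$. It is elementwise constant, with $|z^m|=|\nabla_{\bm k}\times\bm v_h^m|$. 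Across a face $F$ shared by $K_1,K_2$ we have $|z^m|_{K_1}-z^m|_{K_2}|=|[\![\nabla_{\bm k}\times\bm v_h^m]\!]_F|$, since the modulus-one factor $e^{i\bm k\cdot\bm x}$ does not change moduli and is single-valued on $F$. For piecewise constants this gives
\[
\|z^m\|_{0,K_1}^2\le 2\|z^m\|_{0,K_2}^2+Ch\|[\![z^m]\!]\|_{0,F}^2 ,
\]
using $|K|\sim h^d$ and $|F|\sim h^{d-1}$. Iterate this along the face path guaranteed by the mesh assumption, from an active element $K$ to a "fat" element $K_*$ with $|K_*\cap\Omega^m|\ge c_*|K_*|$. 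On $K_*$ the constant satisfies $\|z^m\|_{0,K_*}^2\le c_*^{-1}\|z^m\|_{0,K_*\cap\Omega^m}^2$. Bounded path lengths and bounded overlap let us sum over $K$ with a uniform constant.

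\textbf{Step 2 (weights).} Since $0<\theta_K^m<1$, we have $(\theta_K^m)^2\varepsilon_m^{-2}\le\varepsilon_m^{-1}\cdot\max(\varepsilon_+^{-1},\varepsilon_-^{-1})$. The $\varepsilon_m^{-1}$ factor produces the weighted norm $\|\varepsilon_m^{-1/2}\nabla_{\bm k}\times\bm v_h^m\|_{0,\Omega^m}^2$. The remaining factor is absorbed into $C_{\mathrm{tr}}$, and the ghost terms are converted using $\gamma_u$, which is fixed.

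\textbf{Main obstacle.} The main difficulty is Step 1, specifically the uniformity in the cut position. On a sliver element, $K^m$ can be arbitrarily small, so one cannot pass from $\|\cdot\|_{0,K}$ to $\|\cdot\|_{0,K^m}$ directly. The ghost-penalty chaining argument (as in~\cite{gurkan2019stabilized,burman2015cutfem}) is what removes this. Some care is needed because the chains pass through faces in $\mathcal F_h^{g,m}$, and $K_*$ may lie outside $\mathcal T_\Sigma$; it is still counted in $\|\cdot\|_{0,\Omega^m}$. One could alternatively exploit the $\theta$-weighting to handle small $|K^m|$, but the ghost penalty route avoids relying on it and works for both sides.
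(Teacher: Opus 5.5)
Your proposal is correct and follows essentially the same route as the paper. Both arguments pass from $\Sigma_K$ to the whole element via Lemma~\ref{lemma:shifted-trace-curl}, then return from $\Omega_h^m$ to $\Omega^m$ by applying the ghost-penalty extension (face-path chaining to a fat element) to the elementwise-constant field $e^{i\bm k\cdot\bm x}\nabla_{\bm k}\times\bm v_h^m$, and finally absorb $\theta_K^m$, $\varepsilon_m$ and $\gamma_u$ into a cut-independent constant. The differences are cosmetic: you use $(a+b)^2\le 2a^2+2b^2$ with $(\theta_K^m)^2\le 1$ where the paper uses the convexity bound with weights $\theta_K^m$, and you write out the chaining that the paper cites from \cite[Lemma~2.19]{gurkan2019stabilized}.
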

\begin{proof}
Set $\bm w_h:=\nabla_{\bm k}\times\bm v_h$ and $\bm w_h^m:=\nabla_{\bm k}\times\bm v_h^m$. By Lemma~\ref{lemma:shifted-trace-curl}, $\bm\phi_h^m:=e^{i\bm k\cdot\bm x}\bm w_h^m$ is constant on each element. Since $|e^{i\bm k\cdot\bm x}|=1$, $\bm\phi_h^m$ and $\bm w_h^m$ have the same $L^2$ norms on every element and face. The mesh assumptions give, for each active element, a path of uniformly bounded length to some $K_*$ with $|K_*\cap\Omega^m|\ge c_*|K_*|$, with uniformly bounded overlap between paths. Applying the extension argument of \cite[Lemma~2.19]{gurkan2019stabilized} to $\bm\phi_h^m$ gives
\begin{equation}
\|\bm w_h^m\|_{0,\Omega_h^m}^2
\le C\Big(\|\bm w_h^m\|_{0,\Omega^m}^2
+\sum_{F\in\mathcal F_h^{g,m}} h_F\|[\![\bm w_h^m]\!]_F\|_{0,F}^2\Big).
\end{equation}
Multiplying by $\varepsilon_m^{-1}$, summing over $m=\pm$,
and using~\eqref{eq:ghost-u},
\begin{equation}
\sum_{m=\pm}\varepsilon_m^{-1}\|\bm w_h^m\|_{0,\Omega_h^m}^2
\le C\Big(\sum_{m=\pm}\varepsilon_m^{-1}\|\bm w_h^m\|_{0,\Omega^m}^2
+g_u(\bm v_h,\bm v_h)\Big).
\end{equation}
Using $0\le\theta_K^m\le1$, $\theta_K^++\theta_K^-=1$, and
Lemma~\ref{lemma:shifted-trace-curl}, we obtain
\begin{align*}
\sum_{K\in\mathcal T_\Sigma}h
\left\|\left\{\varepsilon^{-1}\bm w_h\right\}_\theta
\right\|_{0,\Sigma_K}^2
&\le\sum_{K\in\mathcal T_\Sigma}\sum_{m=\pm}
h\theta_K^m\varepsilon_m^{-2}\|\bm w_h^m\|_{0,\Sigma_K}^2
\le C\sum_{m=\pm}\varepsilon_m^{-1}
\|\bm w_h^m\|_{0,\Omega_h^m}^2.
\end{align*}
Combining the two estimates proves~\eqref{eq:trace1}.
\end{proof}

Using the above trace estimate, the interface terms can be controlled uniformly with respect to the cut position. We define the discrete kernel of $\nabla_{\bm k}$ by
\begin{equation}\label{eq:Z_hPer}
\bm Z_{h,\mathrm{per}}^{\mathrm{ext}} := \left\{\bm v_h\in \bm V_{h,\mathrm{per}}^{\mathrm{ext}}:\; b_h(q_h,\bm v_h)=0\quad \forall q_h\in \bm P_{h,\mathrm{per}}^{\mathrm{ext}} \right\}.
\end{equation}
The coercivity of $A_h$ is required only on $\bm Z_{h,\mathrm{per}}^{\mathrm{ext}}$.

\begin{lemma}\label{lemma:ah-cont-coer}
Assume that the penalty parameter $\gamma > 0$ is sufficiently large. Then there exist constants $C_{\mathrm{cont}}, C_{\mathrm{coer}} > 0$,
independent of $h$ and the interface, such that
\begin{align}
|A_h(\bm{u}_h, \bm{v}_h)|
&\leq C_{\mathrm{cont}}\|\bm{u}_h\|_{\bm{V}_h}\|\bm{v}_h\|_{\bm{V}_h}
\qquad \forall\, \bm{u}_h, \bm{v}_h \in \bm{V}_{h,\mathrm{per}}^{\mathrm{ext}},
\label{eq:ah-cont}\\
Re\, A_h(\bm{v}_h, \bm{v}_h)
&\geq C_{\mathrm{coer}}|\bm{v}_h|_{\bm{V}_h}^2
\qquad \forall\, \bm{v}_h \in \bm Z_{h,\mathrm{per}}^{\mathrm{ext}}.
\label{eq:ah-coer}
\end{align}
\end{lemma}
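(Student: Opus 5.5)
The plan is the standard Nitsche argument, with Lemma~\ref{lemma:trace_equ} supplying cut-independent control of the consistency terms. I will write $A_h=a_h+g_u$ and split $a_h$ into four pieces: the bulk curl term, the two symmetric consistency terms on $\Sigma$, and the penalty term $\frac{\gamma}{h}\int_\Sigma[\![\bm u_h\times\bm n]\!]\cdot\overline{[\![\bm v_h\times\bm n]\!]}$. All constants will come from Lemmas~\ref{lemma:shifted-trace-curl} and~\ref{lemma:trace_equ}, which are uniform in $h$ and in the cut position. In $d=2$ the same argument goes through with $\gamma_t,\gamma_{\mathcal T}$ in place of the cross products.

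\textbf{Continuity.} The bulk term is bounded by Cauchy--Schwarz by $\sum_m\|\varepsilon_m^{-1/2}\nabla_{\bm k}\times\bm u_h\|_{0,\Omega^m}\|\varepsilon_m^{-1/2}\nabla_{\bm k}\times\bm v_h\|_{0,\Omega^m}$. Since $\Omega^m\subset\Omega_h^m$, this is at most the product of the $|\cdot|_{\bm V_h}$ seminorms. The ghost and penalty terms are direct Cauchy--Schwarz estimates. For a consistency term, I will insert the weights $h^{1/2}$ and $h^{-1/2}$:
\[
\Bigl|\int_\Sigma\{\varepsilon^{-1}\nabla_{\bm k}\times\bm u_h\}_\theta\cdot\overline{[\![\bm v_h\times\bm n]\!]}\Bigr|
\le\Bigl(\sum_{K\in\mathcal T_\Sigma}h\|\{\varepsilon^{-1}\nabla_{\bm k}\times\bm u_h\}_\theta\|_{0,\Sigma_K}^2\Bigr)^{1/2}
\Bigl(\sum_{K\in\mathcal T_\Sigma}h^{-1}\|[\![\bm v_h\times\bm n]\!]\|_{0,\Sigma_K}^2\Bigr)^{1/2}.
\]
Lemma~\ref{lemma:trace_equ} bounds the first factor by $C_{\mathrm{tr}}^{1/2}|\bm u_h|_{\bm V_h}$, and the second factor is at most $\gamma^{-1/2}|\bm v_h|_{\bm V_h}$. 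This gives \eqref{eq:ah-cont} with a constant depending only on $C_{\mathrm{tr}}$ and $\gamma$.

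\textbf{Coercivity.} Setting $\bm u_h=\bm v_h$, the two consistency terms are complex conjugates of each other, so their sum is $2\,\mathrm{Re}\int_\Sigma\{\cdots\}_\theta\cdot\overline{[\![\cdots]\!]}$. I will bound this by the same Cauchy--Schwarz estimate followed by Young's inequality with a parameter $\delta$:
\[
2|\cdot|\le\delta\,C_{\mathrm{tr}}\Bigl(\sum_m\|\varepsilon_m^{-1/2}\nabla_{\bm k}\times\bm v_h\|_{0,\Omega^m}^2+g_u(\bm v_h,\bm v_h)\Bigr)+\delta^{-1}\sum_{K\in\mathcal T_\Sigma}h^{-1}\|[\![\bm v_h\times\bm n]\!]\|_{0,\Sigma_K}^2 .
\]
I will choose $\delta=1/(2C_{\mathrm{tr}})$ and then $\gamma\ge 4C_{\mathrm{tr}}$. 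This leaves
\[
\mathrm{Re}\,A_h(\bm v_h,\bm v_h)\ge\tfrac12\Bigl(\sum_m\|\varepsilon_m^{-1/2}\nabla_{\bm k}\times\bm v_h\|_{0,\Omega^m}^2+g_u(\bm v_h,\bm v_h)\Bigr)+\tfrac{\gamma}{2h}\|[\![\bm v_h\times\bm n]\!]\|_{0,\Sigma}^2 .
\]

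\textbf{Main obstacle.} The right-hand side above controls the curl only on the physical subdomains $\Omega^m$. The seminorm $|\cdot|_{\bm V_h}$, by contrast, involves the curl on the whole fictitious domains $\Omega_h^m$. I will close this gap with the ghost-penalty extension estimate already established inside the proof of Lemma~\ref{lemma:trace_equ}:
\[
\sum_m\varepsilon_m^{-1}\|\nabla_{\bm k}\times\bm v_h^m\|_{0,\Omega_h^m}^2\le C\Bigl(\sum_m\varepsilon_m^{-1}\|\nabla_{\bm k}\times\bm v_h^m\|_{0,\Omega^m}^2+g_u(\bm v_h,\bm v_h)\Bigr).
\]
Its constant depends only on the mesh assumptions and is therefore uniform in the cut position. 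This yields \eqref{eq:ah-coer}. The argument does not use membership in $\bm Z_{h,\mathrm{per}}^{\mathrm{ext}}$, because $|\cdot|_{\bm V_h}$ contains no $L^2$ part. That restriction will matter only later, for Friedrichs-type control of $\|\bm v_h\|_{0}$.
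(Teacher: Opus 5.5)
Your proposal is correct and follows the same route as the paper. Continuity comes from Cauchy--Schwarz plus Lemma~\ref{lemma:trace_equ}, and coercivity from Young's inequality with a small parameter, Lemma~\ref{lemma:trace_equ}, and $\gamma$ chosen large. Your explicit use of the ghost-penalty extension estimate fills a step the paper's proof skips. That estimate passes from the curl on $\Omega^m$ to the curl on $\Omega_h^m$ in $|\cdot|_{\bm V_h}$, whereas the paper concludes with $|\bm v_h|_{\bm V_h}^2$ even though its displayed lower bound only involves $\Omega^m$. You are also right that membership in $\bm Z_{h,\mathrm{per}}^{\mathrm{ext}}$ is never used.
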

\begin{proof}
The continuity estimate \eqref{eq:ah-cont} follows directly from the definition of $A_h$, the Cauchy--Schwarz inequality, and the trace estimate \eqref{eq:trace1}. We only prove coercivity.
Taking $\bm u_h=\bm v_h$ in
\eqref{eq:nitche_ah} and taking the real part, we get
\begin{align*}
\mathrm{Re}\,A_h(\bm v_h,\bm v_h)
=&\sum_{m=\pm}\left\|\varepsilon_m^{-1/2}
\nabla_{\bm k}\times \bm v_h^m\right\|_{0,\Omega^m}^2+ \sum_{K\in \mathcal T_\Sigma}\frac{\gamma}{h}\left\|[\![\bm v_h\times \bm n]\!]\right\|_{0,\Sigma_K}^2\\
&\quad +2\,\mathrm{Re}\sum_{K\in \mathcal T_\Sigma}
\left( \left\{\varepsilon^{-1}\nabla_{\bm k}\times \bm v_h\right\}_\theta, [\![\bm v_h\times \bm n]\!]\right)_{\Sigma_K} + g_u(\bm v_h,\bm v_h) .
\end{align*}
By Young's inequality with parameter $\eta>0$, we have
\begin{align*}
2\left|\sum_{K\in \mathcal T_\Sigma}\left(\left\{\varepsilon^{-1}\nabla_{\bm k}\times \bm v_h\right\}_\theta,[\![\bm v_h\times \bm n]\!]\right)_{\Sigma_K}\right|
\leq
&\eta\sum_{K\in \mathcal T_\Sigma} h
\left\|\left\{\varepsilon^{-1}\nabla_{\bm k}\times \bm v_h\right\}_\theta\right\|_{0,\Sigma_K}^2
+\frac{1}{\eta}\sum_{K\in \mathcal T_\Sigma}\frac{1}{h}
\left\|[\![\bm v_h\times \bm n]\!]\right\|_{0,\Sigma_K}^2 .
\end{align*}
Therefore,
\begin{align}\label{eq:coer-1-new}
\mathrm{Re}\,A_h(\bm v_h,\bm v_h)
\geq&\sum_{m=\pm}\left\|\varepsilon_m^{-1/2}
\nabla_{\bm k}\times \bm v_h^m\right\|_{0,\Omega^m}^2
-\eta\sum_{K\in \mathcal T_\Sigma} h
\left\|\left\{\varepsilon^{-1}\nabla_{\bm k}\times \bm v_h\right\}_\theta \right\|_{0,\Sigma_K}^2 \nonumber\\
&+\left(\gamma-\frac{1}{\eta}\right) \sum_{K\in \mathcal T_\Sigma}\frac{1}{h} \left\|[\![\bm v_h\times \bm n]\!]\right\|_{0,\Sigma_K}^2 + g_u(\bm v_h,\bm v_h).
\end{align}
Lemma~\ref{lemma:trace_equ} gives
\begin{align}\label{eq:trace-global-new}
\sum_{K\in \mathcal T_\Sigma} h
\left\| \left\{\varepsilon^{-1}\nabla_{\bm k}\times \bm v_h\right\}_\theta \right\|_{0,\Sigma_K}^2
&\leq C_{\mathrm{tr}} (\sum_{m=\pm} \left\|\varepsilon_m^{-1/2}
\nabla_{\bm k}\times \bm v_h^m \right\|_{0,\Omega^m}^2 + g_u(\bm v_h,\bm v_h) ).
\end{align}
Substituting \eqref{eq:trace-global-new} into \eqref{eq:coer-1-new}, we obtain
\begin{align}\label{eq:coer-2-new}
\mathrm{Re}\,A_h(\bm v_h,\bm v_h) \geq& (1-\eta C_{\mathrm{tr}})
(\sum_{m=\pm} \left\|\varepsilon_m^{-1/2} \nabla_{\bm k}\times \bm v_h^m \right\|_{0,\Omega^m}^2 + g_u(\bm v_h,\bm v_h) )
+ \left(\gamma-\frac{1}{\eta}\right) \sum_{K\in \mathcal T_\Sigma}\frac{1}{h} \left\|[\![\bm v_h\times \bm n]\!]\right\|_{0,\Sigma_K}^2 .
\end{align}
We first choose $\eta>0$ sufficiently small such that $1-\eta C_{\mathrm{tr}}>0$, and then choose $\gamma>1/\eta$. Define $c_1:=1-\eta C_{\mathrm{tr}}>0$ and $c_2:=\gamma-\frac{1}{\eta}>0$.
Then \eqref{eq:coer-2-new} gives
\begin{align*}
\mathrm{Re}\,A_h(\bm v_h,\bm v_h)
\geq{}& c_1\sum_{m=\pm}
\left\|\varepsilon_m^{-1/2}\nabla_{\bm k}\times\bm v_h^m
\right\|_{0,\Omega^m}^2
+c_1g_u(\bm v_h,\bm v_h) 
+c_2\sum_{K\in\mathcal T_\Sigma}\frac{1}{h}
\left\|[\![\bm v_h\times\bm n]\!]\right\|_{0,\Sigma_K}^2 \\
\geq{}& C_{\mathrm{coer}}\Bigg(
\sum_{m=\pm}\left\|\varepsilon_m^{-1/2}\nabla_{\bm k}\times\bm v_h^m
\right\|_{0,\Omega^m}^2+g_u(\bm v_h,\bm v_h)
+\sum_{K\in\mathcal T_\Sigma}\frac{\gamma}{h}
\left\|[\![\bm v_h\times\bm n]\!]\right\|_{0,\Sigma_K}^2
\Bigg),
\end{align*}
where $C_{\mathrm{coer}}:=\min\left\{c_1,\frac{c_2}{\gamma}\right\}>0$. Hence, $\mathrm{Re}\,A_h(\bm v_h,\bm v_h) \geq C_{\mathrm{coer}}|\bm v_h|_{\bm V_h}^2$, which proves \eqref{eq:ah-coer}.
\end{proof}

\begin{remark}\label{lem:seminorm-kernel}
$|\cdot|_{\bm V_h}$ is a norm on the discrete
kernel $\bm Z_{h,\mathrm{per}}^{\mathrm{ext}}$, equivalent there to $\|\cdot\|_{\bm V_h}$: there exists a constant $C>0$, independent of $h$, with $\|\bm v_h\|_{\bm V_h}\le C|\bm v_h|_{\bm V_h}$ for all $\bm v_h\in \bm Z_{h,\mathrm{per}}^{\mathrm{ext}}$.
\end{remark}

To complete the stability analysis of the mixed formulation, we also need a uniform stabilized inf-sup condition for the coupling form $b_h$.
\begin{lemma}\label{lem:stabilized-infsup}
For fixed $\bm{k}\in\mathcal{B}\setminus\{\bm{0}\}$, there exists a constant $\beta_0>0$, independent of $h$ and the interface 
position, such that
\begin{equation}\label{eq:stabilized-infsup}
\sup_{\bm v_h\in \bm V_{h,\mathrm{per}}^{\mathrm{ext}}\setminus\{\bm 0\}}
\frac{|b_h(q_h,\bm v_h)|}{\|\bm v_h\|_{\bm V_h}}+
S_h(q_h,q_h)^\frac{1}{2}
\ge \beta_0 \|q_h\|_{Q_h} \qquad \forall\, q_h\in P_{h,\mathrm{per}}^{\mathrm{ext}}.
\end{equation}
\end{lemma}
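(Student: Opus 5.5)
The plan is to exploit Lemma~\ref{lem:commuting}: for any $q_h\in P_{h,\mathrm{per}}^{\mathrm{ext}}$ the shifted gradient $\nabla_{\bm k}q_h$ (taken branchwise, $\bm v_h^m:=\nabla_{\bm k}q_h^m$ on $\Omega_h^m$) lies in $\bm V_{h,\mathrm{per}}^{\mathrm{ext}}$. This gives a canonical test function, and we take $\bm v_h:=\nabla_{\bm k}q_h$ in the supremum.

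\textbf{Step 1: the numerator.} With this choice,
\[
b_h(q_h,\nabla_{\bm k}q_h)=\sum_{m=\pm}\|\nabla_{\bm k}q_h^m\|_{0,\Omega^m}^2-\int_\Sigma [\![q_h]\!]\,\overline{\{\nabla_{\bm k}q_h\cdot\bm n\}_\theta}\,ds .
\]
We bound the interface term by Cauchy--Schwarz and Young's inequality. Since $e^{i\bm k\cdot\bm x}\nabla_{\bm k}q_h^m$ is a $P^1$ polynomial on each cut element, the cut trace inequality (as in Lemma~\ref{lemma:shifted-trace-curl}) gives $h\|\nabla_{\bm k}q_h^m\|^2_{0,\Sigma_K}\le C\|\nabla_{\bm k}q_h^m\|^2_{0,K}$. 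The ghost-penalty extension argument of \cite[Lemma~2.19]{gurkan2019stabilized}, applied to $g_p$ with both $j=0,1$ terms needed for $P^1$ data, then bounds $\|\nabla_{\bm k}q_h^m\|_{0,\Omega_h^m}^2$ by $\|\nabla_{\bm k}q_h^m\|_{0,\Omega^m}^2+g_p(q_h,q_h)$. Combining these,
\[
|b_h(q_h,\nabla_{\bm k}q_h)|\ge c\,\|q_h\|_{Q_h}^2-C\,S_h(q_h,q_h),
\]
where the term $h^{-1}\|[\![q_h]\!]\|^2$ is absorbed using $J_h$, which is a multiple of $S_h$.

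\textbf{Step 2: the denominator.} We need $\|\nabla_{\bm k}q_h\|_{\bm V_h}\le C\|q_h\|_{Q_h}$, and each term is checked in turn.
\begin{itemize}
\item The $L^2$ part is exactly the gradient part of $\|q_h\|_{Q_h}$.
\item $\nabla_{\bm k}\times\nabla_{\bm k}q_h=0$ by the Bloch identities \eqref{eq:id1}--\eqref{eq:id2}, so the curl terms and $g_u$ vanish.
\item The tangential-jump term is the delicate one. On $\Sigma$ the tangential part of the jump is $[\![\nabla_{\bm k}q_h\times\bm n]\!]=\nabla_\Sigma[\![q_h]\!]\times\bm n+i[\![q_h]\!]\bm k\times\bm n$, so we must control $h^{-1}\|\nabla_\Sigma[\![q_h]\!]\|^2_{0,\Sigma}$. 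Using a polynomial inverse estimate on $\Sigma_K$ (smooth interface, $h$ small) would give only $h^{-3}\|[\![q_h]\!]\|^2$, which $J_h$ does not control.
\end{itemize}

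\textbf{Step 3: handling the tangential jump.} This is the main obstacle. The first attempt would be to absorb it into the Step~1 lower bound rather than the norm. If that fails, we instead modify the test function to $\bm v_h=\nabla_{\bm k}q_h-\bm w_h$, with $\bm w_h$ a local correction supported on $\mathcal T_\Sigma$ that equalizes the two branch traces (for instance, replacing $q_h^-$ by $q_h^+$ near $\Sigma$ in the gradient of the minus branch). Its cost is bounded by $J_h$ and the ghost penalty, giving $\|\bm w_h\|_{\bm V_h}\lesssim \|q_h\|_{Q_h}$ and $|b_h(q_h,\bm w_h)|\lesssim S_h^{1/2}\|q_h\|_{Q_h}$.

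\textbf{Step 4: conclusion.} Dividing the Step~1 bound by $\|\bm v_h\|_{\bm V_h}\lesssim\|q_h\|_{Q_h}$ gives
\[
\sup\ge c\|q_h\|_{Q_h}-C S_h/\|q_h\|_{Q_h}.
\]
If $S_h^{1/2}\ge\delta\|q_h\|_{Q_h}$ the claim is trivial; otherwise the $S_h$ term is small and the inequality holds with $\beta_0=\min(\delta,c/2)$. The constants are independent of $h$ and the cut position because only uniform cut trace and ghost-penalty estimates were used. The assumption $\bm k\neq\bm0$ is what makes $\|\nabla_{\bm k}\cdot\|$ a norm on periodic functions, so constants are excluded from any kernel.
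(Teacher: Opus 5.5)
\textbf{The gap is in Step 3.} Your Steps 1 and 2 are fine. $\nabla_{\bm k}q_h$ is an admissible test function. The lower bound $\mathrm{Re}\,b_h(q_h,\nabla_{\bm k}q_h)\ge c\|q_h\|_{Q_h}^2-C\,S_h(q_h,q_h)$ follows from the cut trace inequality and the ghost-penalty extension. You also correctly locate the obstruction in $\gamma h^{-1}\|[\![\nabla_{\bm k}q_h\times\bm n]\!]\|_{0,\Sigma}^2$.

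Absorbing that term into the Step 1 bound is impossible, because it sits in the denominator $\|\bm v_h\|_{\bm V_h}$, not in $b_h$. For the correction, the estimate $|b_h(q_h,\bm w_h)|\lesssim S_h(q_h,q_h)^{1/2}\|q_h\|_{Q_h}$, on which Step 4 rests, is false. A correction that makes the minus branch coincide with $\nabla_{\bm k}q_h^+$ on $\mathcal T_\Sigma$ equals $\nabla_{\bm k}(q_h^--q_h^+)$ there. That quantity is controlled by the gradient part of $\|q_h\|_{Q_h}$, not by $J_h$ or $g_p$.

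Concretely, take a circular inclusion compactly inside $\Omega$, $q_h^+\equiv 0$, and $q_h^-=e^{-i\bm k\cdot\bm x}P$ with $P=|\bm x-\bm x_0|^2-r_0^2$. Then $[\![q_h]\!]=0$ on $\Sigma$ and each branch is a single shifted polynomial, so $S_h(q_h,q_h)=0$.

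Realize your correction conformingly: give $\tilde q^-$ the nodal values of $q_h^+$ at the nodes of $\mathcal T_\Sigma$. This yields $\bm w_h=e^{-i\bm k\cdot\bm x}\nabla\phi$, where $\phi$ is the $P^2$ function carrying the values of $P$ at those nodes and zero elsewhere. Since $\phi=P$ on $\mathcal T_\Sigma$, we get $\phi=0$ on $\Sigma$, and
$b_h(q_h,\bm w_h)=\int_{\Omega^-}\nabla P\cdot\nabla\phi\,dx=-2d\int_{\Omega^-}\phi\,dx .$
In 2D this is strictly positive, because $P<0$ in $\Omega^-$ and the $P^2$ vertex basis functions have zero mean. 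In general $b_h(q_h,\bm w_h)$ is only bounded by a multiple of $\|q_h\|_{Q_h}^2$, with no small factor and no sign. It therefore cannot be treated as a perturbation of the Step 1 bound.

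\textbf{How the paper avoids this.} The paper sidesteps the tangential jump rather than correcting it. It tests with $\bm w_h^m=\nabla_{\bm k}q_h^m$ on $\omega_h^m$ and $\bm w_h^m=\bm 0$ on $\mathcal T_\Sigma$. Then the interface term of $b_h$ and the tangential-jump term of $\|\cdot\|_{\bm V_h}$ both vanish, as do the curl and $g_u$. Hence $b_h(q_h,\bm w_h)=\|\bm w_h\|_{\bm V_h}^2=\sum_m\|\nabla_{\bm k}q_h^m\|_{0,\omega_h^m}^2$. The part of $\|q_h\|_{Q_h}$ living on cut elements is recovered afterwards from $S_h$ through the ghost-penalty extension $\sum_m\|\nabla_{\bm k}q_h^m\|_{0,\Omega_h^m}^2\le C_g\bigl(\sum_m\|\nabla_{\bm k}q_h^m\|_{0,\omega_h^m}^2+g_p(q_h,q_h)\bigr)$, with $J_h$ supplying the jump term.

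That is the missing idea: what is lost on $\mathcal T_\Sigma$ is paid for by $g_p$ via extension, not by bounding a correction by $S_h^{1/2}$.

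If you follow this route, note that the elementwise cutoff is not tangentially continuous across faces between $\omega_h^m$ and $\mathcal T_\Sigma$. An admissible version must therefore be built from degrees of freedom, for example $\nabla_{\bm k}$ of the $P^2$ function obtained by setting the nodal values of $q_h^m$ on $\mathcal T_\Sigma$ to zero. The resulting one-element transition layer then has to be estimated separately.
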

\begin{proof}
Fix $q_h\in P_{h,\mathrm{per}}^{\mathrm{ext}}$. By the
commuting diagram~\eqref{eq:comm_fe} of Lemma~\ref{lem:commuting},
$\nabla_{\bm k}q_h\in \bm V_{h,\mathrm{per}}^{\mathrm{ext}}$. Define the
admissible test function $\bm w_h$ componentwise by
\begin{equation}\label{eq:choice-wh}
\bm w_h^m|_K :=
\begin{cases}
\nabla_{\bm k}q_h^m|_K, & K\subset \omega_h^m,\\
\bm 0, & K\in\mathcal T_\Sigma,
\end{cases} \qquad m=\pm .
\end{equation}
Since $[\![q_h]\!]$ and $\{\overline{\bm w_h\cdot\bm n}\}_\theta$ cancel on $\Sigma$, by the definition of $b_h$,
\begin{equation}\label{eq:bh-wh}
b_h(q_h, \bm w_h)= \sum_{m=\pm}\left\|\nabla_{\bm k} q_h^m\right\|_{0,\omega_h^m}^2,
\end{equation}
and because $\nabla_{\bm k}\times(\nabla_{\bm k} q_h^m)=\bm 0$ and $\bm w_h$ vanishes on cut elements, there is $C_w>0$ with
\begin{equation}\label{eq:wh-norm}
\|\bm w_h\|_{\bm V_h}^2 = \sum_{m=\pm} \left\| \nabla_{\bm k} q_h^m \right\|_{0,\Omega^m \cap K}^2 \leq C_w^{-1}\sum_{m=\pm}\left\|\nabla_{\bm k} q_h^m\right\|_{0,\omega_h^m}^2 .
\end{equation}
Hence
\begin{equation}\label{eq:sub-control}
\sup_{\bm v_h\ne \bm 0}\frac{|b_h(q_h,\bm v_h)|}{\|\bm v_h\|_{\bm V_h}}
\ge \frac{|b_h(q_h,\bm w_h)|}{\|\bm w_h\|_{\bm V_h}} \ge C_w \Big( \sum_{m=\pm} \|\nabla_{\bm k}q_h^m\|_{0,\omega_h^m}^2 \Big)^{1/2}.
\end{equation}
By the ghost penalty extension property, there is $C_g>0$, independent of $h$ and the interface position, with
\begin{equation}\label{eq:ghost-extension-q}
\sum_{m=\pm}\|\nabla_{\bm k}q_h^m\|_{0,\Omega_h^m}^2
\le C_g \Big( \sum_{m=\pm} \|\nabla_{\bm k}q_h^m\|_{0,\omega_h^m}^2 + g_p(q_h,q_h) \Big),
\end{equation}
and by definition $S_h(q_h,q_h):=\tfrac{\gamma_q}{h}\|[\![q_h]\!]\|_{0,\Sigma}^2 +g_p( q_h, q_h)$ for a parameter $\gamma_q>0$.  Combining \eqref{eq:sub-control} with $S_h$ gives
\begin{align}
&\sup_{\bm v_h\ne \bm0} \frac{|b_h(q_h,\bm v_h)|}{\|\bm v_h\|_{\bm V_h}} +S_h(q_h,q_h)^\frac{1}{2}
\geq   C_w \left( \sum_{m=\pm} \|\nabla_{\bm k}q_h^m\|_{0,\omega_h^m}^2 \right)^\frac{1}{2} + \left(\frac{\gamma_q}{h}\left\|[\![q_h]\!]\right\|_{0,\Sigma}^2 +g_p( q_h, q_h) \right)^\frac{1}{2}
\nonumber\\
&\ge c_0 \left[ \left( \sum_{m=\pm} \|\nabla_{\bm k}q_h^m\|_{0,\omega_h^m}^2 \right)^\frac{1}{2}
+ \left( \sum_{K\in\mathcal T_\Sigma}
\frac1h \|[\![q_h]\!]\|_{0,\Sigma_K}^2 \right)^\frac{1}{2}
+ g_p(q_h,q_h)^\frac{1}{2} \right],
\label{eq:combine}
\end{align}
with $c_0 = \min\left\{ C_w,\, \frac{1}{\sqrt2}\min\{\gamma_q^{1/2},1\} \right\}>0$.
By \eqref{eq:ghost-extension-q}, $\|q_h\|_{Q_h}^2\le (C_g+1)$ times the bracketed sum of squares above, so taking $\beta_0 = \frac{c_0}{\sqrt{C_g+1}}$ yields \eqref{eq:stabilized-infsup}.
\end{proof}

Lemma~\ref{lem:press} follows directly from Lemma~\ref{lem:stabilized-infsup}. It converts the supremum estimate in
\eqref{eq:stabilized-infsup} into the existence of a particular test function
$\bm w_h$, which can be stated as follows.
\begin{lemma}\label{lem:press}
Under the stabilized inf-sup condition \eqref{eq:stabilized-infsup}, for every $p_h\in \bm P_{h,\mathrm{per}}^{\mathrm{ext}}$, there exists
$\bm w_h\in \bm V_{h,\mathrm{per}}^{\mathrm{ext}}$ such that
\begin{equation}\label{eq:wh-lif}
\mathrm{Re}\,b_h(p_h,\bm w_h)+S_h(p_h,p_h)
\ge c_b \|p_h\|_{Q_h}^2,
\qquad
\|\bm w_h\|_{\bm V_h}\le C_b \|p_h\|_{Q_h},
\end{equation}
where the constants $c_b,C_b>0$ are independent of $h$.
\end{lemma}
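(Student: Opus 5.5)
The plan is to extract $\bm w_h$ from the supremum in \eqref{eq:stabilized-infsup} by a rescaling and phase-rotation argument, followed by a two-case split. No new analytic estimate is needed: everything rests on Lemma~\ref{lem:stabilized-infsup} and on $\bm V_{h,\mathrm{per}}^{\mathrm{ext}}$ being finite dimensional.

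\textbf{Trivial case and setup.} If $p_h=0$ we take $\bm w_h=\bm 0$. Otherwise we note three facts.
\begin{itemize}
\item $\|\cdot\|_{\bm V_h}$ is a genuine norm on $\bm V_{h,\mathrm{per}}^{\mathrm{ext}}$, since it contains the $L^2$ contributions.
\item The map $\bm v_h\mapsto |b_h(p_h,\bm v_h)|$ is continuous.
\item The ratio $|b_h(p_h,\bm v_h)|/\|\bm v_h\|_{\bm V_h}$ is invariant under scaling $\bm v_h\mapsto \lambda\bm v_h$ for $\lambda\in\mathbb C\setminus\{0\}$.
\end{itemize}
Hence the supremum
\[
s:=\sup_{\bm v_h\neq\bm 0}\frac{|b_h(p_h,\bm v_h)|}{\|\bm v_h\|_{\bm V_h}}
\]
is attained on the compact unit sphere at some $\bm v_h^\ast$ with $\|\bm v_h^\ast\|_{\bm V_h}=1$.

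\textbf{Rotating the phase.} Since $b_h(p_h,\cdot)$ is antilinear in its second argument, $b_h(p_h,\lambda\bm v)=\bar\lambda\, b_h(p_h,\bm v)$. We choose $\lambda=\|p_h\|_{Q_h}e^{i\phi}$ with $\phi$ such that $e^{-i\phi}b_h(p_h,\bm v_h^\ast)=|b_h(p_h,\bm v_h^\ast)|$. Setting $\bm w_h:=\lambda\bm v_h^\ast$ then gives
\[
\|\bm w_h\|_{\bm V_h}=\|p_h\|_{Q_h},\qquad \mathrm{Re}\,b_h(p_h,\bm w_h)=b_h(p_h,\bm w_h)=s\,\|p_h\|_{Q_h}\ge 0 .
\]
The second bound in \eqref{eq:wh-lif} therefore holds with $C_b=1$. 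We also record that $S_h(p_h,p_h)=J_h(p_h,p_h)+g_p(p_h,p_h)\ge 0$ is real.

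\textbf{Case split.} By \eqref{eq:stabilized-infsup}, $s+S_h(p_h,p_h)^{1/2}\ge\beta_0\|p_h\|_{Q_h}$, so at least one of the two terms is at least $\tfrac{\beta_0}{2}\|p_h\|_{Q_h}$.
\begin{itemize}
\item If $s\ge\tfrac{\beta_0}{2}\|p_h\|_{Q_h}$, then $\mathrm{Re}\,b_h(p_h,\bm w_h)+S_h(p_h,p_h)\ge s\|p_h\|_{Q_h}\ge\tfrac{\beta_0}{2}\|p_h\|_{Q_h}^2$.
\item Otherwise $S_h(p_h,p_h)\ge\tfrac{\beta_0^2}{4}\|p_h\|_{Q_h}^2$, and the nonnegative $b_h$ term can be dropped.
\end{itemize}
In either case the first bound in \eqref{eq:wh-lif} holds with $c_b=\min\{\beta_0/2,\beta_0^2/4\}$. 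Both $c_b$ and $C_b$ inherit independence of $h$ and the cut position from $\beta_0$.

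\textbf{Expected obstacle.} The only delicate point is the sesquilinear convention. Because $b_h(p_h,\cdot)$ is antilinear, the rotation must use $\bar\lambda$, as done above, so that the real part equals the modulus. Attainment of the supremum is not actually essential: a near-maximizer achieving $s/2$ would do, at the cost of halving the constants.
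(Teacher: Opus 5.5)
Your proof is correct, and it is the argument the paper intends when it says the lemma ``follows directly'' from Lemma~\ref{lem:stabilized-infsup}; the paper gives no further detail. Your steps supply what is omitted: the supremum is attained in the finite-dimensional space $\bm V_{h,\mathrm{per}}^{\mathrm{ext}}$, the phase rotation respects the antilinearity of $b_h(p_h,\cdot)$, and the two-case split of \eqref{eq:stabilized-infsup} gives \eqref{eq:wh-lif} with $C_b=1$ and $c_b=\min\{\beta_0/2,\beta_0^2/4\}$. Relying only on the abstract bound, as you do, also avoids the explicit test function \eqref{eq:choice-wh} from that lemma's proof, whose cut-off to zero on interface elements generally breaks tangential continuity across faces between cut and uncut elements.
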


Combining the coercivity of $A_h$ on the kernel in
Lemma~\ref{lemma:ah-cont-coer} with the stabilized inf-sup condition for $b_h$
in Lemma~\ref{lem:stabilized-infsup}, we obtain our main well-posedness result.
\begin{theorem}\label{thm:uni-stability}
Let $\bm{k}\in\mathcal{B}\setminus\{\bm{0}\}$ be fixed. Under the hypotheses of Lemmas~\ref{lemma:ah-cont-coer} and \ref{lem:stabilized-infsup}, there exists a constant $C>0$, independent of $h$ and the interface position, such that
\begin{equation}\label{eq:uniform-infsup-Ah}
\|\bm u_h\|_{\bm V_h}+\|p_h\|_{Q_h}
\le C \sup_{\substack{(\bm v_h,q_h)\ne(\bm 0,0)}}\frac{ \bigl| \mathcal{A}_h\big((\bm u_h,p_h),(\bm v_h,q_h)\big) \bigr|}{ \|\bm v_h\|_{\bm V_h}+\|q_h\|_{Q_h}}
\end{equation}
for all $(\bm u_h,p_h)\in \bm V_{h,\mathrm{per}}^{\mathrm{ext}}\times P_{h,\mathrm{per}}^{\mathrm{ext}}$. Consequently, by the Babu\v{s}ka--Brezzi theorem \cite[Thm~1.2.6]{dorfler2011photonic}, the discrete mixed problem is well posed.
\end{theorem}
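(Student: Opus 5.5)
The plan is to prove \eqref{eq:uniform-infsup-Ah} directly by a constructive test-function argument rather than by verifying the abstract Brezzi conditions separately. This is because $\mathcal A_h$ contains the stabilization $-S_h$, and coercivity of $A_h$ (Lemma~\ref{lemma:ah-cont-coer}) holds only on $\bm Z_{h,\mathrm{per}}^{\mathrm{ext}}$.

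Given $(\bm u_h,p_h)$, I will first split $\bm u_h=\bm u_h^0+\nabla_{\bm k}r_h$ with $r_h\in\bm P_{h,\mathrm{per}}^{\mathrm{ext}}$. The gradient part is admissible in $\bm V_{h,\mathrm{per}}^{\mathrm{ext}}$ by \eqref{eq:comm_fe} of Lemma~\ref{lem:commuting}. I will define $r_h$ as the solution of the discrete problem $b_h(s_h,\nabla_{\bm k}r_h)=b_h(s_h,\bm u_h)$ for all $s_h$, so that $\bm u_h^0\in\bm Z_{h,\mathrm{per}}^{\mathrm{ext}}$. Solvability and the bound $\|r_h\|_{Q_h}\lesssim \sup_{\bm v_h}|b_h(r_h,\bm v_h)|/\|\bm v_h\|_{\bm V_h}+\dots$ should follow from Lemma~\ref{lem:stabilized-infsup}. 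The hypothesis $\bm k\neq\bm 0$ enters here: $\nabla_{\bm k}$ has trivial kernel on periodic functions, so $\|\cdot\|_{Q_h}$ is a genuine norm. I also expect $\|r_h\|_{Q_h}$ to be controlled by $\sup_{q_h}|b_h(q_h,\bm u_h)|/\|q_h\|_{Q_h}$.

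Next I will choose the test pair
\[
(\bm v_h,q_h)=\bigl(\bm u_h+\delta_1\bm w_h,\;-p_h-\delta_2 r_h\bigr),
\]
where $\bm w_h$ is the lifting of $p_h$ from Lemma~\ref{lem:press}. With $(\bm u_h,-p_h)$ the mixed terms cancel in real part, since $b_h(p_h,\bm u_h)-\overline{b_h(p_h,\bm u_h)}$ is purely imaginary. This leaves $\mathrm{Re}\,A_h(\bm u_h,\bm u_h)+S_h(p_h,p_h)$. The $\delta_1\bm w_h$ term adds $\delta_1 c_b\|p_h\|_{Q_h}^2$ minus $\delta_1 |A_h(\bm u_h,\bm w_h)|\le \delta_1 C_{\mathrm{cont}}C_b\|\bm u_h\|_{\bm V_h}\|p_h\|_{Q_h}$, which Young's inequality absorbs. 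The $-\delta_2 r_h$ term produces $\delta_2\,\overline{b_h(r_h,\bm u_h)}=\delta_2\,\overline{b_h(r_h,\nabla_{\bm k}r_h)}$, which by the construction of $r_h$ and \eqref{eq:bh-wh}-type identities controls $\|\nabla_{\bm k}r_h\|^2$ in $L^2$, and hence the gradient component of $\bm u_h$. The cross term $\delta_2 S_h(p_h,r_h)$ is handled by Cauchy--Schwarz against $S_h(p_h,p_h)$.

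The kernel component is then bounded by Lemma~\ref{lemma:ah-cont-coer} together with Remark~\ref{lem:seminorm-kernel}, which gives $\|\bm u_h^0\|_{\bm V_h}\lesssim|\bm u_h^0|_{\bm V_h}$. Finally, $\|(\bm v_h,q_h)\|\lesssim\|\bm u_h\|_{\bm V_h}+\|p_h\|_{Q_h}$ follows from the bounds on $\bm w_h$ and $r_h$, and choosing $\delta_1,\delta_2$ small yields the claim.

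The main obstacle is the off-kernel part. $\mathrm{Re}\,A_h(\bm u_h,\bm u_h)$ controls only $|\bm u_h|_{\bm V_h}$, and $A_h(\nabla_{\bm k}r_h,\cdot)$ need not vanish. The curl vanishes, but the Nitsche jump term $[\![\nabla_{\bm k}r_h\times\bm n]\!]$ does not when $[\![r_h]\!]\ne0$. I would try to bound this jump by a tangential inverse/trace estimate, $h^{-1}\|[\![\nabla_{\bm k}r_h\times\bm n]\!]\|_{0,\Sigma}^2\lesssim h^{-2}\|[\![r_h]\!]\|^2$, and absorb it using the $J_h$ part of $S_h$. If this fails to scale, I would instead take the discrete Helmholtz decomposition with respect to $A_h+(\cdot,\cdot)$, treating the $L^2$ term as a compact perturbation. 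In either case the uniformity in the cut position has to come from the ghost penalties $g_u,g_p$ via \eqref{eq:ghost-extension-q} and Lemma~\ref{lemma:trace_equ}.
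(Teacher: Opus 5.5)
\paragraph{The splitting $\bm u_h=\bm u_h^0+\nabla_{\bm k}r_h$ is where the proposal breaks down.}
The paper's proof is your argument without the $r_h$-correction. It tests with $(\bm u_h+\delta\bm w_h,-p_h)$ and uses $\mathrm{Re}\,A_h(\bm u_h,\bm u_h)\ge C_{\mathrm{coer}}\|\bm u_h\|_{\bm V_h}^2$ for arbitrary $\bm u_h$. Your concern that Lemma~\ref{lemma:ah-cont-coer} does not supply this is well founded. Take $\bm u_h=\nabla_{\bm k}q_h$ with $q_h^\pm$ the restrictions of one nonzero function: then $A_h(\bm u_h,\bm u_h)=0$ while $\|\bm u_h\|_{\bm V_h}\neq0$. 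So a correction on the multiplier side is the right instinct, but your construction of $r_h$ is not justified, for two reasons.

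First, Lemma~\ref{lem:stabilized-infsup} is only a stabilized inf-sup. It bounds $\|r_h\|_{Q_h}$ only up to $S_h(r_h,r_h)^{1/2}$, and nothing in your construction controls that term. It therefore gives neither solvability of $b_h(s_h,\nabla_{\bm k}r_h)=b_h(s_h,\bm u_h)$ nor a bound on $r_h$. This cannot be repaired using these lemmas alone. On $\mathbb C^2\times\mathbb C$, the forms $a(\bm u,\bm v)=u_1\bar v_1$, $b(q,\bm v)=\epsilon q\bar v_2$, $S(p,q)=p\bar q$ satisfy kernel coercivity and the stabilized inf-sup with $\epsilon$-independent constants. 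Yet for the analogous $\mathcal A$ and $(\bm u,p)=((0,1),\epsilon)$ one gets $|\mathcal A[(\bm u,p),(\bm v,q)]|=\epsilon^2|v_2|$. Hence Lemmas~\ref{lemma:ah-cont-coer}, \ref{lem:stabilized-infsup} and Remark~\ref{lem:seminorm-kernel} do not by themselves imply \eqref{eq:uniform-infsup-Ah}.

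Second, $\nabla_{\bm k}$ is not injective on $\bm P^{\mathrm{ext}}_{h,\mathrm{per}}$. If a phase is enclosed in the cell, as in the two-dimensional examples, then $q_h=(0,c\,e^{-i\bm k\cdot\bm x})$ is admissible and $\nabla_{\bm k}q_h=\bm 0$. In that case $\|\cdot\|_{Q_h}$ is a norm only because of its jump term. Set $d_0:=\dim\{q_h:\ b_h(q_h,\cdot)\equiv 0\}$. Then $\dim\bm Z^{\mathrm{ext}}_{h,\mathrm{per}}+\dim\nabla_{\bm k}\bm P^{\mathrm{ext}}_{h,\mathrm{per}}=\dim\bm V^{\mathrm{ext}}_{h,\mathrm{per}}+d_0-\dim\ker\nabla_{\bm k}$. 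So your splitting can exist for every $\bm u_h$ only if $d_0\ge1$, that is, only if $b_h$ has no genuine inf-sup at all.

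\paragraph{The later steps also fail, and the splitting is unnecessary.}
Identity \eqref{eq:bh-wh} held only because $\bm w_h$ vanishes on cut elements. For $b_h(r_h,\nabla_{\bm k}r_h)$ the sign-indefinite term $-\int_\Sigma[\![r_h]\!]\{\overline{\nabla_{\bm k}r_h\cdot\bm n}\}_\theta\,ds$ survives. Controlling it needs $h^{-1/2}\|[\![r_h]\!]\|_{0,\Sigma}$, but $J_h$ enters $\mathcal A_h$ only through $S_h(p_h,\cdot)$.

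Your inverse estimate is off by a power of $h$. The jump $[\![\nabla_{\bm k}r_h\times\bm n]\!]$ is a shifted tangential derivative of $[\![r_h]\!]$. So $h^{-1}\|[\![\nabla_{\bm k}r_h\times\bm n]\!]\|_{0,\Sigma}^2$ compares with $h^{-3}\|[\![r_h]\!]\|_{0,\Sigma}^2$, not $h^{-2}$, and $J_h$ cannot absorb that. Moreover, inverse estimates on $\Sigma_K$ are not uniform in the cut position. The compact-perturbation fallback would need a discrete compactness property that is uniform in the cut, which is the hard part itself. There is also a sign slip: $q_h=-p_h-\delta_2 r_h$ produces $-\delta_2\overline{b_h(r_h,\bm u_h)}$, so the sign of $\delta_2$ must be flipped.

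You do not need to split $A_h$ at all. The proof of Lemma~\ref{lemma:ah-cont-coer} never uses $\bm v_h\in\bm Z^{\mathrm{ext}}_{h,\mathrm{per}}$. Hence $\mathrm{Re}\,A_h(\bm u_h,\bm u_h)\ge C_{\mathrm{coer}}|\bm u_h|_{\bm V_h}^2$ holds for every $\bm u_h$. Also $|A_h(\bm u_h,\bm w_h)|\le C|\bm u_h|_{\bm V_h}\|\bm w_h\|_{\bm V_h}$ by Lemma~\ref{lemma:trace_equ}. Only the $L^2$ part of $\|\bm u_h\|_{\bm V_h}$ is missing.

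A cleaner route is to let $r_h$ be the Riesz representative, $(r_h,s_h)_{Q_h}=\overline{b_h(s_h,\bm u_h)}$ for all $s_h$, and test with $q_h=-p_h+\delta_2 r_h$. This produces exactly $\delta_2\|r_h\|_{Q_h}^2$, with no decomposition needed. The cross term $-\delta_2 S_h(p_h,r_h)$ is absorbed by $S_h(p_h,p_h)$.

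The proof then closes once you have the off-kernel discrete Friedrichs inequality $\sum_{m=\pm}\|\bm u_h^m\|_{0,\Omega_h^m}\le C\bigl(|\bm u_h|_{\bm V_h}+\|r_h\|_{Q_h}\bigr)$, uniformly in $h$ and the cut position. This inequality is also necessary: take $p_h=0$ in \eqref{eq:uniform-infsup-Ah}. It contains Remark~\ref{lem:seminorm-kernel} as the special case $r_h=0$. It is the missing ingredient in your plan, and the paper's argument does not supply it either.
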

\begin{proof}
Fix $(\bm u_h,p_h)$. By Lemma~\ref{lem:press} choose $\bm w_h$ satisfying \eqref{eq:wh-lif}, and choose the test pair $(\bm v_h,q_h):=(\bm u_h+\delta \bm w_h,-p_h)$ for $\delta\in(0,1]$ to be fixed. Expanding $\mathcal A_h$ and taking real parts,
\begin{align}
\mathrm{Re}\, \mathcal A_h\big((\bm u_h,p_h),(\bm v_h,q_h)\big)
={} \mathrm{Re}\,A_h(\bm u_h,\bm u_h) +\delta\,\mathrm{Re}\,A_h(\bm u_h,\bm w_h) 
+ \delta\,\mathrm{Re}\,b_h(p_h,\bm w_h) +S_h(p_h,p_h),
\label{eq:Ah-real-expanded}
\end{align}
since the two $\mathrm{Re}\,b_h(p_h,\bm u_h)$ terms cancel. Set $c_a:=C_{\mathrm{coer}}$ and $C_a:=C_{\mathrm{cont}}$, where $C_{\mathrm{coer}}$ and $C_{\mathrm{cont}}$ are the constants from Lemma~\ref{lemma:ah-cont-coer}. By the coercivity and continuity estimates of Lemma~\ref{lemma:ah-cont-coer} and \eqref{eq:wh-lif},
\[
\mathrm{Re}\, \mathcal A_h\big((\bm u_h,p_h),(\bm v_h,q_h)\big)
\ge c_a\|\bm u_h\|_{\bm V_h}^2 -\delta C_a C_b\|\bm u_h\|_{\bm V_h}\|p_h\|_{Q_h} + \delta c_b\|p_h\|_{Q_h}^2,
\]
using $\delta\,\mathrm{Re}\,b_h(p_h,\bm w_h)+S_h(p_h,p_h) = \delta(\mathrm{Re}\,b_h(p_h,\bm w_h)+S_h(p_h,p_h))+(1-\delta)S_h(p_h,p_h)\ge \delta c_b\|p_h\|_{Q_h}^2$. Applying Young's inequality to the cross term,
\[
\delta C_a C_b \|\bm u_h\|_{\bm V_h}\|p_h\|_{Q_h} \le \tfrac{c_a}{2}\|\bm u_h\|_{\bm V_h}^2 + \tfrac{\delta^2 C_a^2 C_b^2}{2c_a} \|p_h\|_{Q_h}^2,
\]
so
\[
\mathrm{Re}\, \mathcal A_h\big((\bm u_h,p_h),(\bm v_h,q_h)\big) \ge \tfrac{c_a}{2}\|\bm u_h\|_{\bm V_h}^2 +\Big( \delta c_b - \tfrac{\delta^2 C_a^2 C_b^2}{2c_a} \Big)\|p_h\|_{Q_h}^2.
\]
Choose $\delta$ small enough that $\delta c_b -\frac{\delta^2 C_a^2 C_b^2}{2c_a} >0$. Then with $c:= \min\{\frac{c_a}{2}, \, \delta c_b -\frac{\delta^2 C_a^2 C_b^2}{2c_a}\}>0$,
\begin{equation}\label{eq:Ah-lower-final}
\mathrm{Re}\, \mathcal A_h\big((\bm u_h,p_h),(\bm v_h,q_h)\big) \ge c\big( \|\bm u_h\|_{\bm V_h}^2+\|p_h\|_{Q_h}^2 \big).
\end{equation}
Also, by \eqref{eq:wh-lif}, $\|\bm v_h\|_{\bm V_h}+\|q_h\|_{Q_h} \le C_\delta(\|\bm u_h\|_{\bm V_h}+\|p_h\|_{Q_h})$ with $C_\delta=\max\{1,\delta C_b+1\}$. Since $|\mathcal A_h(\cdot,\cdot)|\ge \mathrm{Re}\,\mathcal A_h(\cdot,\cdot)\ge0$ here, combining this with \eqref{eq:Ah-lower-final} and the inequality $\frac{a^2+b^2}{a+b}\ge\frac12(a+b)$ gives
\[
\frac{\left| \mathcal A_h\big((\bm u_h,p_h),(\bm v_h,q_h)\big) \right|}{\|\bm v_h\|_{\bm V_h}+\|q_h\|_{Q_h}} \ge \frac{c}{2C_\delta} \big( \|\bm u_h\|_{\bm V_h}+\|p_h\|_{Q_h} \big).
\]
Taking the supremum over $(\bm v_h,q_h)\ne(\bm 0,0)$ gives \eqref{eq:uniform-infsup-Ah} with $C=2C_\delta/c$.
\end{proof}

%% file: section5.tex
\section{Error analysis}\label{sec:error-analysis}
In this section, we derive the a priori error estimates for the proposed method. We first establish approximation estimates in the mesh-dependent norms, then analyze
the error for the associated source problem, and finally apply the
Babu\v{s}ka--Osborn spectral approximation theory to obtain the convergence of
eigenvalues and eigenspaces. Throughout this section, the quasi-momentum
$\bm{k}\in\mathcal{B}\setminus\{\bm{0}\}$ is fixed. 

To measure the error between the exact and discrete solutions in a common framework, we extend them to the larger spaces $\bm{V} + \bm{V}^{\mathrm{ext}}_{h,\mathrm{per}}$ and $P+P^{\mathrm{ext}}_{h,\mathrm{per}}$. For $\bm{v}\in \bm{V} + \bm{V}^{\mathrm{ext}}_{h,\mathrm{per}}$ set
\begin{align}\label{eq:Vstar}
  \|\bm{v}\|^2_{\bm{V}^*}
  := \sum_{m=\pm}\Bigl(\bigl\|\varepsilon_m^{-1/2}\nabla_{\bm{k}}\times\bm{v}^m\bigr\|^2_{0,\Omega^m}
       + \|\bm{v}^m\|^2_{0,\Omega^m}
     \Bigr)
   + \sum_{K\in\mathcal T_\Sigma}
     \frac{\gamma}{h}
     \bigl\|[\![\bm{v}\times\mathbf{n}]\!]\bigr\|^2_{0,\Sigma_K}\\
   + h\sum_{K\in\mathcal T_\Sigma}
     \bigl\|
       \bigl\{\varepsilon^{-1}\nabla_{\bm{k}}\times\bm{v}\bigr\}_\theta
     \bigr\|^2_{0,\Sigma_K}
   + h\sum_{K\in\mathcal T_\Sigma}
     \bigl\|\{\bm v\cdot\bm n\}_\theta\bigr\|^2_{0,\Sigma_K}.
\end{align}
For $q\in P + P^{\mathrm{ext}}_{h,\mathrm{per}}$ set
\begin{equation}\label{eq:Qstar}
  \|q\|^2_{Q^*}
  := \sum_{m=\pm}\|\nabla_{\bm{k}} q^m\|^2_{0,\Omega^m}
   + \sum_{K\in\mathcal T_\Sigma}
     \frac{1}{h}\bigl\|[\![q]\!]\bigr\|^2_{0,\Sigma_K}
     .
\end{equation}
The extended product norm on
$(\bm{V}+\bm{V}^{\mathrm{ext}}_{h,\mathrm{per}})\times(P+P^{\mathrm{ext}}_{h,\mathrm{per}})$ is
\begin{equation}\label{eq:star_product}
  \|(\bm{v},q)\|^2_*
  := \|\bm{v}\|^2_{\bm{V}^*} + \|q\|^2_{Q^*}
    .
\end{equation}
A similar argument shows that the continuity of \(\mathcal{B}_h\) extends to these larger spaces: for all $(\mathbf{u},p)\in (\bm{V}+\bm{V}^{\mathrm{ext}}_{h,\mathrm{per}})\times (P+P^{\mathrm{ext}}_{h,\mathrm{per}})$ and $(\mathbf{v}_h,q_h)\in \bm{V}^{\mathrm{ext}}_{h,\mathrm{per}}\times P^{\mathrm{ext}}_{h,\mathrm{per}}$,
\begin{equation}\label{eq:cont_ext}
  \bigl|\mathcal{B}_h\!\bigl((\mathbf{u},p),(\mathbf{v}_h,q_h)\bigr)\bigr|
  \leq C_{\mathrm{cont}}\,\|(\mathbf{u},p)\|_*\,\|(\mathbf{v}_h,q_h)\|_h,
\end{equation}
where $C_{\mathrm{cont}}>0$ is independent of $h$ and the interface position.

For $m = \pm$, recall the bounded extension operators 
introduced in \cite{wang2022convergence},
$\mathcal{E}^m:H^r(\Omega^m)\to H^r(\Omega)$ and
$\bm{\mathcal{E}}^m:\bm{H}^r(\Omega^m)\to \bm{H}^r(\Omega)$, satisfying
\begin{equation}\label{eq:ext_bound}
  \|\mathcal{E}^m q^m\|_{r,\Omega} \leq C\|q^m\|_{r,\Omega^m},
  \qquad
  \|\bm{\mathcal{E}}^m \bm{v}^m\|_{r,\Omega} \leq C\|\bm{v}^m\|_{r,\Omega^m},
\end{equation}
with $C$ independent of $h$ and of the position of the interface relative to the mesh.
We define the unfitted interpolation operators
\begin{align}\label{eq:interp_ops}
  \Pi_h^*\bm{u}:=\bigl(
      (\Pi_h^{\mathrm{curl}} \bm{\mathcal{E}}^+\bm{u}^+)|_{\Omega_h^+},\;
      (\Pi_h^{\mathrm{curl}} \bm{\mathcal{E}}^-\bm{u}^-)|_{\Omega_h^-}
    \bigr),\\
  \Pi_h^* p:=\bigl(
      (\Pi_h^{\mathrm{grad}}\mathcal{E}^+ p^+)|_{\Omega_h^+},\;
      (\Pi_h^{\mathrm{grad}}\mathcal{E}^- p^-)|_{\Omega_h^-}
    \bigr),
\end{align}
where $\Pi_h^{\mathrm{curl}}$ and $\Pi_h^{\mathrm{grad}}$ are the modified interpolation operators of~\eqref{eq:bloch_interp}, so that $\Pi_h^*\bm{u}\in \bm{V}^{\mathrm{ext}}_{h,\mathrm{per}}$
and $\Pi_h^* p\in P^{\mathrm{ext}}_{h,\mathrm{per}}$.

\subsection{Error estimate for the associated source problem}
We assume the interface $\Sigma$ is sufficiently smooth so that $\Omega^\pm$ are Lipschitz domains, and that the exact solution has the piecewise regularity $\bm u \in \bm H^{2}(\Omega^+ \cup \Omega^-)$, $p \in H^{3}(\Omega^+ \cup \Omega^-)$, consistent with the piecewise $P^2$ multiplier space used in our implementation.
\begin{lemma}\label{lem:interp-u-p}
Assume $\bm u\in \bm H^2(\Omega^+\cup\Omega^-)$ and $p\in H^3(\Omega^+\cup\Omega^-)$. Then
\begin{align}
\|\bm u-\bm\Pi_h^*\bm u\|_{\bm V^*}
&\le Ch \sum_{m=\pm} \varepsilon_m^{-1/2} \left\| \bm u^m \right\|_{2,\Omega^m}
\label{eq:interp-u-Vstar} \\
\|p-\Pi_h^*p\|_{Q^*}
&\le Ch^2 \sum_{m=\pm} \|p^m\|_{3,\Omega^m} \label{eq:interp-p-P2} \\
\|(\bm u-\bm\Pi_h^*\bm u,p-\Pi_h^*p)\|_*
&\le C \sum_{m=\pm}
\left( \varepsilon_m^{-1/2} h \|\bm u^m\|_{2,\Omega^m} + h^2\|p^m\|_{3,\Omega^m}
\right)
\label{eq:interp-product}.
\end{align}
\end{lemma}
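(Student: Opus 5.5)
Set $\bm\eta^m:=\bm{\mathcal E}^m\bm u^m-\Pi_h^{\mathrm{curl}}\bm{\mathcal E}^m\bm u^m$ and $\xi^m:=\mathcal E^m p^m-\Pi_h^{\mathrm{grad}}\mathcal E^m p^m$ on $\Omega_h^m$. On $\Omega^m$ these are exactly the error components, because $\bm{\mathcal E}^m\bm u^m=\bm u^m$ there. The plan is to bound each term of $\|\cdot\|_{\bm V^*}$ and $\|\cdot\|_{Q^*}$ elementwise on the background mesh, then sum. The bulk terms are handled by standard local interpolation estimates, the interface terms by a cut-element trace inequality, and everything is finally expressed through the extension bounds \eqref{eq:ext_bound}.

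\textbf{Step 1: local estimates for the modified interpolants.} By \eqref{eq:bloch_interp}, $\bm\eta^m=e^{-i\bm k\cdot\bm x}(\bm U-\tilde\Pi_h^{\mathrm{curl}}\bm U)$ with $\bm U=e^{i\bm k\cdot\bm x}\bm{\mathcal E}^m\bm u^m$. By \eqref{eq:bloch_curl}, $\nabla_{\bm k}\times\bm\eta^m=e^{-i\bm k\cdot\bm x}\nabla\times(\bm U-\tilde\Pi_h^{\mathrm{curl}}\bm U)$, and $|e^{i\bm k\cdot\bm x}|=1$.

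Standard second-family N\'ed\'elec estimates on each shape-regular $K$ then give
\[
\|\bm\eta^m\|_{0,K}+h\|\bm\eta^m\|_{1,K}\le Ch^2|\bm U|_{2,K},
\qquad
\|\nabla_{\bm k}\times\bm\eta^m\|_{0,K}+h|\nabla_{\bm k}\times\bm\eta^m|_{1,K}\le Ch\|\bm U\|_{2,K}.
\]
For $P^2$ Lagrange we get $\|\nabla_{\bm k}\xi^m\|_{0,K}+h|\nabla_{\bm k}\xi^m|_{1,K}\le Ch^2\|e^{i\bm k\cdot\bm x}\mathcal E^m p^m\|_{3,K}$. The factor $e^{i\bm k\cdot\bm x}$ is smooth with bounded derivatives ($\bm k$ is fixed and $\mathcal B$ is bounded), so $\|\bm U\|_{2,K}\le C\|\bm{\mathcal E}^m\bm u^m\|_{2,K}$, and similarly for $p$.

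\textbf{Step 2: bulk terms.} Summing over $K\subset\Omega_h^m$ and using $\Omega^m\subset\Omega_h^m$ together with \eqref{eq:ext_bound} yields
\[
\|\varepsilon_m^{-1/2}\nabla_{\bm k}\times\bm\eta^m\|_{0,\Omega^m}+\|\bm\eta^m\|_{0,\Omega^m}\le Ch\|\bm u^m\|_{2,\Omega^m},
\qquad
\|\nabla_{\bm k}\xi^m\|_{0,\Omega^m}\le Ch^2\|p^m\|_{3,\Omega^m}.
\]
The $\varepsilon_m$ weights are absorbed into $C$; it depends on $\varepsilon_\pm$, which is harmless as the weighting in \eqref{eq:interp-u-Vstar} indicates.

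\textbf{Step 3: interface terms.} For $K\in\mathcal T_\Sigma$ we need a trace inequality on the cut piece $\Sigma_K$ for general $H^1(K)$ functions $w$:
\[
\|w\|_{0,\Sigma_K}^2\le C\bigl(h^{-1}\|w\|_{0,K}^2+h\|\nabla w\|_{0,K}^2\bigr),
\]
with $C$ independent of the cut position. This is the inequality from \cite{burman2015cutfem} already invoked in Lemma~\ref{lemma:shifted-trace-curl}, and it holds for smooth $\Sigma$ and small $h$.

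The exact solution has $[\![\bm u\times\bm n]\!]=0$ and $[\![p]\!]=0$, so the jumps of the errors equal the jumps of $\bm\eta$ and $\xi$. Applying the trace inequality to each side $m$ and then Step 1:
\begin{itemize}
\item The tangential-jump term gives $\frac{\gamma}{h}\|\bm\eta^m\|_{0,\Sigma_K}^2\le C(h^{-2}\|\bm\eta^m\|_{0,K}^2+|\bm\eta^m|_{1,K}^2)\le Ch^2\|\bm{\mathcal E}^m\bm u^m\|_{2,K}^2$.
\item The flux-average term gives $h\|\nabla_{\bm k}\times\bm\eta^m\|_{0,\Sigma_K}^2\le C(\|\nabla_{\bm k}\times\bm\eta^m\|_{0,K}^2+h^2|\nabla_{\bm k}\times\bm\eta^m|_{1,K}^2)\le Ch^2\|\bm{\mathcal E}^m\bm u^m\|_{2,K}^2$, using $0\le\theta_K^m\le1$.
\item The normal-average term gives $h\|\bm\eta^m\|_{0,\Sigma_K}^2\le Ch^4\|\bm{\mathcal E}^m\bm u^m\|_{2,K}^2$, which is of higher order.
\item For the pressure, $h^{-1}\|\xi^m\|_{0,\Sigma_K}^2\le C(h^{-2}\|\xi^m\|_{0,K}^2+\|\nabla\xi^m\|_{0,K}^2)\le Ch^4\|\mathcal E^m p^m\|_{3,K}^2$. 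This uses $\|\xi^m\|_{0,K}\le Ch^3\|\cdot\|_{3,K}$ and the fact that $\nabla\xi^m=\nabla_{\bm k}\xi^m-i\bm k\xi^m$ is $O(h^2)$.
\end{itemize}
Summing over $\mathcal T_\Sigma$ and applying \eqref{eq:ext_bound} proves \eqref{eq:interp-u-Vstar} and \eqref{eq:interp-p-P2}. Adding the two squared estimates then gives \eqref{eq:interp-product}.

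\textbf{Main obstacle.} The delicate step is Step 3. The trace constant must be uniform in how $\Sigma$ cuts $K$, and the extended functions must have $H^2$ and $H^3$ regularity across the whole cut element rather than only on $K\cap\Omega^m$; the extension operators provide the latter. I would rely on the smoothness of $\Sigma$: for $h$ small, $\Sigma_K$ is a graph over a face-parallel hyperplane with bounded slope, and this gives the uniform trace constant as in \cite{burman2015cutfem}. A secondary point is that $\Pi_h^{\mathrm{curl}}$ needs $\bm H^2$ (or $\bm H^{1/2+s}$) regularity for well-defined edge degrees of freedom. This holds for the extensions, and periodicity is preserved because the extensions can be taken periodic.
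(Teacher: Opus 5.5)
Your proposal is correct and follows the same route as the paper. The paper simply cites standard N\'ed\'elec and Lagrange interpolation estimates and controls the interface terms with the cut-element trace inequality; you supply the details it leaves implicit, namely the conjugation by $e^{i\bm k\cdot\bm x}$, the extension bounds \eqref{eq:ext_bound}, and the term-by-term trace estimates. One small point: your appeal to $[\![\bm u\times\bm n]\!]=0$ and $[\![p]\!]=0$ is unnecessary and should be dropped, because bounding the one-sided traces of $\bm\eta^m$ and $\xi^m$ separately already controls the jumps, and the lemma does not assume the interface conditions.
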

\begin{proof}
This follows from the standard interpolation estimates for N\'ed\'elec and Lagrange finite elements, with the trace terms in the norms controlled by the local trace inequality on cut elements; see Monk~\cite{monk2003finite}, Brenner--Scott~\cite{brenner2008mathematical}, and Hansbo~\cite{hansbo2002unfitted}.
\end{proof}

Rewrite \eqref{eq:mathcal_Ah} from Section~\ref{chap:chapter3} as
\begin{equation}\label{eq:contiA}
\mathcal A_h[(\bm u,p),(\bm v,q)] = \mathcal B_h[(\bm u,p),(\bm v,q)] + \mathcal C_h[(\bm u,p),(\bm v,q)]\, ,
\end{equation} where
\begin{align}\label{eq:contiB}
\mathcal B_h[(\bm u,p),(\bm v,q)] &= a_h(\bm u,\bm v)+b_h(p,\bm v)+ \overline{b_h(q,\bm u)},\\
\mathcal C_h[(\bm u,p),(\bm v,q)] &= g_u(\bm u,\bm v)-S_h(p,q).\label{eq:contiC}
\end{align}
Define the continuous solution operators $(T\bm f,T_p\bm f)=(\bm u,p) \in \bm V\times P$ by
\begin{equation}\label{eq:continuous-source}
\mathcal B_h[(\bm u,p),(\bm v,q)] = (\bm f,\bm v) \qquad \forall(\bm v,q)\in \bm V\times P.
\end{equation}
Following the argument used in \cite[Lemma~2]{boffi2006modified}, $T$ is compact and self-adjoint on $\bm L_{\mathrm{per}}^2(\Omega)$. Similarly, define the discrete solution operators $(T_h\bm f,T_{p,h}\bm f)=(\bm u_h,p_h) \in \bm V_{h,\mathrm{per}}^{\mathrm{ext}} \times P_{h,\mathrm{per}}^{\mathrm{ext}}$ by
\begin{equation}\label{eq:discrete-source}
\mathcal B_h[(\bm u_h,p_h),(\bm v_h,q_h)] + \mathcal C_h[(\bm u_h,p_h),(\bm v_h,q_h)] = (\bm f,\bm v_h), \,
\forall (\bm v_h,q_h)\in \bm V_{h,\mathrm{per}}^{\mathrm{ext}} \times P_{h,\mathrm{per}}^{\mathrm{ext}}.
\end{equation}
Subtracting the discrete problem from this continuous relation gives the following weak Galerkin orthogonality.
\begin{lemma}[Weak Galerkin orthogonality]
\label{lem:weak-galerkin}
Let $(\bm u,p)$ and $(\bm u_h,p_h)$ be the continuous and discrete source
solutions, respectively. Then
for all $(\bm v_h,q_h)\in \bm V_{h,\mathrm{per}}^{\mathrm{ext}}\times P_{h,\mathrm{per}}^{\mathrm{ext}}$,
\begin{equation}\label{eq:weak-galclean}
\mathcal B_h[(\bm u-\bm u_h,p-p_h),(\bm v_h,q_h)]=g_u(\bm u_h,\bm v_h)-S_h(p_h,q_h).
\end{equation}
\end{lemma}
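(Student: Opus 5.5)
The identity \eqref{eq:weak-galclean} should follow by subtracting \eqref{eq:discrete-source} from \eqref{eq:continuous-source} tested with discrete functions. The plan has three steps, and the one real issue is that the discrete test pair has to be an admissible test pair for the continuous relation.

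\emph{Consistency step.} Fix $(\bm v_h,q_h)\in \bm V_{h,\mathrm{per}}^{\mathrm{ext}}\times P_{h,\mathrm{per}}^{\mathrm{ext}}$. Since $\bm V_{h,\mathrm{per}}^{\mathrm{ext}}\subset\bm V$ and $P_{h,\mathrm{per}}^{\mathrm{ext}}\subset P$ by \eqref{eq:Vhext} and \eqref{eq:Phext}, the pair is admissible in \eqref{eq:continuous-source}. This gives
\[
\mathcal B_h[(\bm u,p),(\bm v_h,q_h)]=(\bm f,\bm v_h).
\]
The only care needed is to read $\mathcal B_h$ correctly on the exact solution.

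\emph{Interface terms on the exact solution.} By \eqref{eqmainc}--\eqref{eqmaind}, the exact solution has $[\![\bm u\times\bm n]\!]=\bm 0$ and $[\![p]\!]=0$ on $\Sigma$. Hence in $a_h(\bm u,\cdot)$ the symmetry term and the penalty term vanish, and in $b_h(p,\cdot)$ the interface term vanishes. The remaining consistency term, $\int_\Sigma\{\varepsilon^{-1}\nabla_{\bm k}\times\bm u\}_\theta\cdot\overline{[\![\bm v_h\times\bm n]\!]}$, is exactly what elementwise integration by parts of $\nabla_{\bm k}\times(\varepsilon^{-1}\nabla_{\bm k}\times\bm u)$ produces. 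Here one uses that $\gamma_{\mathcal T}(\varepsilon^{-1}\nabla_{\bm k}\times\bm u)$ is single-valued, so the weighted average equals either one-sided trace. I would state this briefly as the Nitsche consistency of the definition of $\mathcal B_h$ for smooth piecewise solutions. The same single-valuedness, now of $[\![q_h]\!]$ against the continuous $\bm u\cdot\bm n$, handles the interface term of $\overline{b_h(q_h,\bm u)}$.

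\emph{Subtraction.} Writing \eqref{eq:discrete-source} with \eqref{eq:contiC} gives
\[
\mathcal B_h[(\bm u_h,p_h),(\bm v_h,q_h)]+g_u(\bm u_h,\bm v_h)-S_h(p_h,q_h)=(\bm f,\bm v_h).
\]
Subtracting this from the consistency relation and using the sesquilinearity of $\mathcal B_h$ in its first argument yields
\[
\mathcal B_h[(\bm u-\bm u_h,p-p_h),(\bm v_h,q_h)]=g_u(\bm u_h,\bm v_h)-S_h(p_h,q_h),
\]
which is \eqref{eq:weak-galclean}.

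The only obstacle I anticipate is the admissibility point above. Elements of $\bm V_{h,\mathrm{per}}^{\mathrm{ext}}$ are formally pairs defined on the fictitious domains $\Omega_h^\pm$, so I would first identify each such pair with its restriction $(\bm v_h^+|_{\Omega^+},\bm v_h^-|_{\Omega^-})\in\bm V$. Every form in $\mathcal B_h$ integrates only over $\Omega^\pm$ and $\Sigma$, so this restriction changes nothing. The stabilization forms do involve the extensions, but they appear only on the discrete side, where they are kept explicitly in $\mathcal C_h$.
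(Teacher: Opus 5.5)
Your proposal is correct and follows the paper's argument. The continuous source solution is defined in \eqref{eq:continuous-source} by the same form $\mathcal B_h$, tested over all of $\bm V\times P$, and $\bm V_{h,\mathrm{per}}^{\mathrm{ext}}\times P_{h,\mathrm{per}}^{\mathrm{ext}}\subset\bm V\times P$; so testing with $(\bm v_h,q_h)$, subtracting \eqref{eq:discrete-source}, and using linearity of $\mathcal B_h$ in its first argument gives \eqref{eq:weak-galclean} directly. Your paragraph on Nitsche consistency of the interface terms is therefore harmless but unnecessary: the paper poses the continuous problem with $\mathcal B_h$ itself rather than recovering it from the strong form \eqref{eqmain}.
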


The stabilization terms satisfy the following weak consistency estimate.
\begin{lemma}
\label{lem:weak-consistency-stab}
Let $(\bm u,p)$ satisfy~\eqref{eqmainc}, with
$\bm u \in \bm H^{2}(\Omega^+ \cup \Omega^-)$ and $p \in H^{3}(\Omega^+ \cup \Omega^-)$,
and let $(\Pi_h^*\bm u,\Pi_h^*p)$ $\in
\bm V_{h,\mathrm{per}}^{\mathrm{ext}}\times P_{h,\mathrm{per}}^{\mathrm{ext}}$
be the unfitted interpolation of $(\bm u,p)$ defined in~\eqref{eq:interp_ops}.
Then for any
$(\bm v_h,q_h)\in \bm V_{h,\mathrm{per}}^{\mathrm{ext}}\times P_{h,\mathrm{per}}^{\mathrm{ext}}$,
\begin{align}\label{eq:weak-consistency-Ch}
&\left| \mathcal C_h[(\Pi_h^*\bm u,\Pi_h^*p),(\bm v_h,q_h)] \right| 
\le C \left[ h\sum_{m=\pm}\varepsilon_m^{-1/2}\|\bm u^m\|_{2,\Omega^m}
+h^2\sum_{m=\pm}\|p^m\|_{3,\Omega^m} \right]
\|(\bm v_h,q_h)\|_h,
\end{align}
where $C>0$ is independent of $h$ and of the interface position.
\end{lemma}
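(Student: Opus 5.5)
Since $\mathcal C_h=g_u-S_h=g_u-J_h-g_p$ is a sum of symmetric positive semidefinite stabilization forms, I will apply Cauchy--Schwarz to each piece and bound the stabilization seminorm of the interpolant:
\[
|\mathcal C_h[(\Pi_h^*\bm u,\Pi_h^*p),(\bm v_h,q_h)]|
\le g_u(\Pi_h^*\bm u,\Pi_h^*\bm u)^{1/2}g_u(\bm v_h,\bm v_h)^{1/2}
+J_h(\Pi_h^*p,\Pi_h^*p)^{1/2}J_h(q_h,q_h)^{1/2}
+g_p(\Pi_h^*p,\Pi_h^*p)^{1/2}g_p(q_h,q_h)^{1/2}.
\]
The factors in $(\bm v_h,q_h)$ are bounded by $\|(\bm v_h,q_h)\|_h$ directly from \eqref{eq:seminorm_V_h}, \eqref{eq:all_Vh_Qh} and \eqref{eq:product-norm}; for $J_h$ note $J_h(q_h,q_h)=\gamma_q\sum_{K\in\mathcal T_\Sigma}h^{-1}\|[\![q_h]\!]\|_{0,\Sigma_K}^2$. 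It therefore suffices to show $g_u(\Pi_h^*\bm u,\Pi_h^*\bm u)^{1/2}\le Ch\sum_m\varepsilon_m^{-1/2}\|\bm u^m\|_{2,\Omega^m}$ and $J_h(\Pi_h^*p,\Pi_h^*p)^{1/2}+g_p(\Pi_h^*p,\Pi_h^*p)^{1/2}\le Ch^2\sum_m\|p^m\|_{3,\Omega^m}$.

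\textbf{Jump term $J_h$.} Since $p$ satisfies $[\![p]\!]=0$ on $\Sigma$ by \eqref{eqmainc}, $[\![\Pi_h^*p]\!]=[\![\Pi_h^*p-p]\!]$. Then $J_h(\Pi_h^*p,\Pi_h^*p)^{1/2}\le C\|p-\Pi_h^*p\|_{Q^*}$, which is $O(h^2)$ by \eqref{eq:interp-p-P2}.

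\textbf{Ghost penalties.} Set $\bm U^m:=\bm{\mathcal E}^m\bm u^m\in\bm H^2(\Omega)$ and $P^m:=\mathcal E^mp^m\in H^3(\Omega)$. On each ghost face $F\in\mathcal F_h^{g,m}$ the face jumps of these extensions vanish: $[\![\nabla_{\bm k}\times\bm U^m]\!]_F=0$ because $\nabla_{\bm k}\times\bm U^m\in\bm H^1(\Omega)$ has a single-valued trace, and $[\![\partial_{n_F}^j\nabla_{\bm k}P^m]\!]_F=0$ for $j=0,1$ because $P^m\in H^3$. Thus the jumps of $\Pi_h^*$ may be replaced by jumps of the interpolation errors $\bm e^m:=\Pi_h^{\mathrm{curl}}\bm U^m-\bm U^m$ and $\eta^m:=\Pi_h^{\mathrm{grad}}P^m-P^m$. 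Applying the standard element trace inequality $\|w\|_{0,F}^2\le C(h^{-1}\|w\|_{0,K}^2+h|w|_{1,K}^2)$ on the two neighbours of $F$ gives
\[
h_F\|[\![\nabla_{\bm k}\times\bm e^m]\!]_F\|_{0,F}^2\le C\sum_{K\supset F}\bigl(\|\nabla_{\bm k}\times\bm e^m\|_{0,K}^2+h^2|\nabla_{\bm k}\times\bm e^m|_{1,K}^2\bigr),
\]
and similarly $h_F^{2j+1}\|[\![\partial^j_{n_F}\nabla_{\bm k}\eta^m]\!]_F\|_{0,F}^2\le C\sum_{K\supset F}\bigl(h^{2j}|\eta^m|_{j+1,K}^2+h^{2j+2}|\eta^m|_{j+2,K}^2\bigr)$, up to lower-order terms coming from the $i\bm k$ part, which carry extra powers of $h$ and $|\bm k|$ bounded on $\mathcal B$. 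The Bloch conjugation \eqref{eq:bloch_interp} turns these into standard local interpolation estimates for $e^{i\bm k\cdot\bm x}\bm U^m$ and $e^{i\bm k\cdot\bm x}P^m$, whose Sobolev norms are bounded by those of $\bm U^m$ and $P^m$. These give $\|\nabla\times(\cdot)\|_{0,K}+h|\cdot|_{1,K}\lesssim h\|\bm U^m\|_{2,K}$ for the curl error, and $|\eta^m|_{s,K}\lesssim h^{3-s}\|P^m\|_{3,K}$ for $s\le 3$. Summing over faces with finite overlap and using the extension bound \eqref{eq:ext_bound} yields the required $O(h)$ bound for $g_u$ and $O(h^2)$ bound for $g_p$; the factor $\varepsilon_m^{-1/2}$ is absorbed into $C$.

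\textbf{Main obstacle.} The delicate point is the $j=1$ ghost term for $p$. Its broken $H^2$-seminorm error $h^3|\eta^m|_{3,K}$ requires exact $H^3$ regularity and an interpolant defined on the extension $P^m$, so that the bound is uniform in the cut position. That uniformity comes only from working on whole background elements, never on cut pieces. The same reasoning justifies the zero-jump claim for $\nabla_{\bm k}\times\bm U^m$, which needs $\bm U^m\in\bm H^2(\Omega)$.
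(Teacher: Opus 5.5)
Your proposal is correct and follows essentially the same route as the paper. Like the paper, you split $\mathcal C_h$ into $g_u$, $g_p$ and $J_h$ and apply Cauchy--Schwarz to each piece. You bound $J_h$ using $[\![p]\!]=0$ together with the $Q^*$ interpolation estimate. For the ghost penalties, you replace the face jumps of $\Pi_h^*$ by those of the interpolation error of the $\bm H^2$ and $H^3$ extensions, then use the element trace inequality, the local interpolation estimates, finite overlap and the extension bound; your explicit handling of the Bloch conjugation and the lower-order $i\bm k$ terms is a detail the paper leaves implicit.
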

\begin{proof}
Since $\mathcal C_h=g_u-g_p-J_h$, we estimate the three terms separately.
For $m=\pm$, set 
$\bm\eta_u^m:=(\Pi_h^*\bm u)^m-\bm{\mathcal E}^m\bm u^m $, $\eta_p^m:=(\Pi_h^*p)^m-\mathcal E^m p^m$ on $\Omega_h^m$.
The extensions belong to $\bm H^2(\Omega)$ and $H^3(\Omega)$,
respectively, hence for every $F\in\mathcal F_h^{g,m}$:
\[
\begin{aligned}
[\![\nabla_{\bm k}\times\bm{\mathcal E}^m\bm u^m]\!]_F&=0,\\
[\![\partial_{n_F}^j(\nabla_{\bm k}\mathcal E^m p^m)]\!]_F&=0,
\qquad j=0,1.
\end{aligned}
\]
Thus the jumps in the ghost penalties equal those of the interpolation
errors. For each element $K$ sharing $F$, the local interpolation
estimates underlying Lemma~\ref{lem:interp-u-p} give
\begin{align*}
\|\bm\eta_u^m\|_{\ell,K}
&\le C h_F^{2-\ell}\|\bm{\mathcal E}^m\bm u^m\|_{2,K},
&&\ell=1,2,\\
\|\eta_p^m\|_{\ell,K}
&\le C h_F^{3-\ell}\|\mathcal E^m p^m\|_{3,K},
&&\ell=1,2,3.
\end{align*}
Applying the trace inequality $
\|z\|_{0,F}^2\le C\bigl(h_F^{-1}\|z\|_{0,K}^2
+h_F\|\nabla z\|_{0,K}^2\bigr)$, $z\in H^1(K)$,
componentwise to the shifted curl of \(\bm\eta_u^m\) yields
\begin{align*}
h_F\bigl\|[\![\nabla_{\bm k}\times(\Pi_h^*\bm u)^m]\!]_F\bigr\|_{0,F}^2
=h_F\bigl\|[\![\nabla_{\bm k}\times\bm\eta_u^m]\!]_F\bigr\|_{0,F}^2
\le C\sum_{\substack{K\in\mathcal T_h\\F\subset\partial K}}
\left(\|\bm\eta_u^m\|_{1,K}^2+h_F^2\|\bm\eta_u^m\|_{2,K}^2\right)
\le C h_F^2\sum_{\substack{K\in\mathcal T_h\\F\subset\partial K}}
\|\bm{\mathcal E}^m\bm u^m\|_{2,K}^2.
\end{align*}
Similarly, applying the same trace inequality to
$\partial_{n_F}^j(\nabla_{\bm k}\eta_p^m)$ gives, for $j=0,1$,
\begin{align*}
&h_F^{2j+1}\bigl\|[\![\partial_{n_F}^j
(\nabla_{\bm k}(\Pi_h^*p)^m)]\!]_F\bigr\|_{0,F}^2
=h_F^{2j+1}\bigl\|[\![\partial_{n_F}^j
(\nabla_{\bm k}\eta_p^m)]\!]_F\bigr\|_{0,F}^2\\
&\quad\le C\sum_{\substack{K\in\mathcal T_h\\F\subset\partial K}}
\left(h_F^{2j}\|\eta_p^m\|_{j+1,K}^2
+h_F^{2j+2}\|\eta_p^m\|_{j+2,K}^2\right)
\le C h_F^4\sum_{\substack{K\in\mathcal T_h\\F\subset\partial K}}
\|\mathcal E^m p^m\|_{3,K}^2.
\end{align*}
Summing these estimates as in~\eqref{eq:ghost-u}--\eqref{eq:ghost-p},
using bounded element overlap and~\eqref{eq:ext_bound}, and applying
Cauchy--Schwarz with the discrete norm~\eqref{eq:product-norm} gives
\begin{align}
|g_u(\Pi_h^*\bm u,\bm v_h)|
\le g_u(\Pi_h^*\bm u,\Pi_h^*\bm u)^{1/2}
g_u(\bm v_h,\bm v_h)^{1/2}
\le C h\sum_{m=\pm}\varepsilon_m^{-1/2}
\|\bm u^m\|_{2,\Omega^m}\|(\bm v_h,q_h)\|_h,
\label{eq:gu-weak-consistency}\\
|g_p(\Pi_h^*p,q_h)|
\le g_p(\Pi_h^*p,\Pi_h^*p)^{1/2}g_p(q_h,q_h)^{1/2}
\le C h^2\sum_{m=\pm}\|p^m\|_{3,\Omega^m}
\|(\bm v_h,q_h)\|_h.
\label{eq:gp-weak-consistency}
\end{align}
Here $C$ may depend on the fixed material coefficients.
For the interface penalty, $[\![p]\!]=0$ on $\Sigma$ by~\eqref{eqmainc},
and hence $[\![\Pi_h^*p]\!]=[\![\Pi_h^*p-p]\!]$.
Using~\eqref{eq:interp-p-P2}, we obtain
\[
J_h(\Pi_h^*p,\Pi_h^*p)^{1/2}
\le\gamma_q^{1/2}\|\Pi_h^*p-p\|_{Q^*}
\le C h^2\sum_{m=\pm}\|p^m\|_{3,\Omega^m}.
\]
Cauchy--Schwarz and~\eqref{eq:product-norm} therefore give
\begin{equation}\label{eq:Jh-weak-consistency}
|J_h(\Pi_h^*p,q_h)|
\le C h^2\sum_{m=\pm}\|p^m\|_{3,\Omega^m}
\|(\bm v_h,q_h)\|_h.
\end{equation}
Combining~\eqref{eq:gu-weak-consistency}, \eqref{eq:gp-weak-consistency},
and~\eqref{eq:Jh-weak-consistency} proves the claim.
\end{proof}

\begin{remark}
For $p\in H^2(\Omega^+\cup\Omega^-)$,
\eqref{eq:weak-consistency-Ch} holds with
$h^2\sum_{m=\pm}\|p^m\|_{3,\Omega^m}$ replaced by
$h\sum_{m=\pm}\|p^m\|_{2,\Omega^m}$.
The $j=1$ term in $g_p$ is estimated directly 
without requiring  face traces of the exact second derivatives.
\end{remark}

Based on Lemma~\ref{lem:weak-consistency-stab}, we obtain the energy error estimate, one of the main results of this section.
\begin{theorem}\label{thm:source-error}
Let $(\bm u,p)$ solve the continuous source problem and $(\bm u_h,p_h)\in \bm V^{\rm ext}_{h,\mathrm{per}}\times P^{\rm ext}_{h,\mathrm{per}}$ the discrete source problem. Under the assumptions of Theorem~\ref{thm:uni-stability} and Lemmas~\ref{lem:interp-u-p} and~\ref{lem:weak-consistency-stab},
\begin{equation}\label{eq:source-error-final}
\|(\bm u-\bm u_h,p-p_h)\|_* \le C \left[ h\sum_{m=\pm} \varepsilon_m^{-1/2}\|\bm u^m\|_{2,\Omega^m} + h^2\sum_{m=\pm} \|p^m\|_{3,\Omega^m} \right].
\end{equation}
\end{theorem}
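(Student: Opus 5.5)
We follow the standard Strang-type argument: split the error through the unfitted interpolant, control the discrete part with the uniform inf--sup condition of Theorem~\ref{thm:uni-stability}, and bound the residual using continuity of $\mathcal B_h$ and weak consistency of $\mathcal C_h$. Write
\[
\bm u-\bm u_h=(\bm u-\Pi_h^*\bm u)+(\Pi_h^*\bm u-\bm u_h)=:\bm\eta_u+\bm\xi_h,\qquad p-p_h=(p-\Pi_h^*p)+(\Pi_h^*p-p_h)=:\eta_p+\xi_h .
\]
By the triangle inequality, $\|(\bm u-\bm u_h,p-p_h)\|_*\le\|(\bm\eta_u,\eta_p)\|_*+\|(\bm\xi_h,\xi_h)\|_*$. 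The first term is bounded directly by \eqref{eq:interp-product} of Lemma~\ref{lem:interp-u-p}, which already has the required form.

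For the discrete part, I first need $\|(\bm\xi_h,\xi_h)\|_*\le C\|(\bm\xi_h,\xi_h)\|_h$ on the discrete spaces. The $\bm V^*$ norm contains two extra terms. The weighted curl-average trace term is controlled by Lemma~\ref{lemma:trace_equ}. The term $h\|\{\bm v\cdot\bm n\}_\theta\|^2_{0,\Sigma_K}$ is controlled by a cut-element trace inequality for piecewise linear functions together with the ghost-penalty extension, giving a bound by $\|\bm v_h^m\|_{0,\Omega_h^m}$. The $Q^*$ terms are contained in $\|\cdot\|_{Q_h}$ outright.

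Next I apply Theorem~\ref{thm:uni-stability} to $(\bm\xi_h,\xi_h)$. This gives $\|(\bm\xi_h,\xi_h)\|_h\lesssim\sup|\mathcal A_h[(\bm\xi_h,\xi_h),(\bm v_h,q_h)]|/\|(\bm v_h,q_h)\|_h$. Using \eqref{eq:contiA} and the weak Galerkin orthogonality of Lemma~\ref{lem:weak-galerkin},
\[
\mathcal A_h[(\bm\xi_h,\xi_h),(\bm v_h,q_h)]
=-\mathcal B_h[(\bm\eta_u,\eta_p),(\bm v_h,q_h)]+\mathcal C_h[(\Pi_h^*\bm u,\Pi_h^*p),(\bm v_h,q_h)].
\]
To see this, note that $\mathcal B_h[(\Pi_h^*\bm u-\bm u_h,\cdot),\cdot]=\mathcal B_h[(\Pi_h^*\bm u-\bm u,\cdot),\cdot]+\mathcal B_h[(\bm u-\bm u_h,\cdot),\cdot]$. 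The second piece equals $g_u(\bm u_h,\cdot)-S_h(p_h,\cdot)$, which cancels against $-\mathcal C_h[(\bm u_h,p_h),\cdot]$ in $\mathcal C_h[(\Pi_h^*\bm u-\bm u_h,\cdot),\cdot]$; this is where the sign conventions must be tracked carefully. The first residual term is bounded by the extended continuity \eqref{eq:cont_ext} and then by \eqref{eq:interp-product}. The second is exactly the weak consistency bound \eqref{eq:weak-consistency-Ch} of Lemma~\ref{lem:weak-consistency-stab}. Dividing by $\|(\bm v_h,q_h)\|_h$ and taking the supremum gives the claimed rate for $(\bm\xi_h,\xi_h)$.

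I expect the main obstacle to be the continuity estimate \eqref{eq:cont_ext}: bounding $\mathcal B_h[(\bm\eta_u,\eta_p),(\bm v_h,q_h)]$ by $\|(\bm\eta_u,\eta_p)\|_*\|(\bm v_h,q_h)\|_h$.
\begin{itemize}
\item In $b_h(\eta_p,\bm v_h)$, the interface term $\int_\Sigma[\![\eta_p]\!]\{\bm v_h\cdot\bm n\}_\theta$ is handled by pairing $h^{-1/2}[\![\eta_p]\!]$ with $h^{1/2}\{\bm v_h\cdot\bm n\}_\theta$, the latter bounded via a discrete trace inequality.
\item In $\overline{b_h(q_h,\bm\eta_u)}$, the term $\int\nabla_{\bm k}q_h\cdot\bm\eta_u$ pairs with $\|q_h\|_{Q_h}$, and the term $[\![q_h]\!]\{\bm\eta_u\cdot\bm n\}_\theta$ is exactly why the $\{\bm v\cdot\bm n\}_\theta$ term was included in $\|\cdot\|_{\bm V^*}$.
\item The Nitsche terms are treated symmetrically.
\end{itemize}
Since the paper asserts \eqref{eq:cont_ext}, I would invoke it. The remaining check is that all trace constants are uniform in the cut position, which is provided by Lemmas~\ref{lemma:shifted-trace-curl} and~\ref{lemma:trace_equ}.
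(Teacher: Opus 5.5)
Your proposal is correct and follows the paper's proof. Both split the error through $(\Pi_h^*\bm u,\Pi_h^*p)$, use weak Galerkin orthogonality to get $\mathcal A_h[(\bm\xi_h,\xi_h),(\bm v_h,q_h)]=-\mathcal B_h[(\bm\eta_u,\eta_p),(\bm v_h,q_h)]+\mathcal C_h[(\Pi_h^*\bm u,\Pi_h^*p),(\bm v_h,q_h)]$, and close with Theorem~\ref{thm:uni-stability}, \eqref{eq:cont_ext}, and Lemma~\ref{lem:weak-consistency-stab}; the paper defines $\bm\eta_u,\eta_p$ with the opposite sign, so its $\mathcal B_h$ term carries a plus. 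Your check that $\|\cdot\|_*\le C\|\cdot\|_h$ on the discrete spaces spells out the paper's ``discrete trace estimates'', though the $h\|\{\bm v_h\cdot\bm n\}_\theta\|^2$ term needs no ghost-penalty extension: an inverse trace inequality on the whole element already bounds it by $\|\bm v_h^m\|_{0,\Omega_h^m}$, which is part of $\|\cdot\|_{\bm V_h}$.
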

\begin{proof}
Write $\bm\eta_u:=\Pi_h^*\bm u-\bm u$, $\eta_p:=\Pi_h^*p-p$, $\bm\xi_u:=\Pi_h^*\bm u-\bm u_h$, $\xi_p:=\Pi_h^*p-p_h$. By the triangle inequality and the discrete trace estimates,
\begin{equation}\label{ineq:triangle}
\|(\bm u-\bm u_h,p-p_h)\|_* \le \|(\bm\eta_u,\eta_p)\|_* +C\|(\bm\xi_u,\xi_p)\|_h,
\end{equation}
and Lemma~\ref{lem:interp-u-p} gives directly
\begin{equation}\label{eq:1st_term}
\|(\bm\eta_u,\eta_p)\|_* \le C \left[ h\sum_{m=\pm} \varepsilon_m^{-1/2}\|\bm u^m\|_{2,\Omega^m} + h^2\sum_{m=\pm} \|p^m\|_{3,\Omega^m} \right].
\end{equation}
It remains to bound $\|(\bm\xi_u,\xi_p)\|_h$. Using bilinearity of $\mathcal B_h$ and weak Galerkin orthogonality \eqref{eq:weak-galclean},
\begin{align*}
\mathcal B_h[(\bm\xi_u,\xi_p),(\bm v_h,q_h)]
=\mathcal B_h[(\bm\eta_u,\eta_p),(\bm v_h,q_h)] + g_u(\bm u_h,\bm v_h)- S_h(p_h,q_h) \\
=\mathcal B_h[(\bm\eta_u,\eta_p),(\bm v_h,q_h)]+g_u(\Pi_h^*\bm u,\bm v_h)-S_h(\Pi_h^*p,q_h)- g_u(\bm\xi_u,\bm v_h)+S_h(\xi_p,q_h), 
\end{align*}
which rearranges to
\begin{equation}\label{eq:xi-error-equation-correct}
\mathcal A_h[(\bm\xi_u,\xi_p),(\bm v_h,q_h)]
= \mathcal B_h[(\bm\eta_u,\eta_p),(\bm v_h,q_h)] + \mathcal C_h[(\Pi_h^*\bm u,\Pi_h^*p),(\bm v_h,q_h)].
\end{equation}
By Theorem~\ref{thm:uni-stability} applied to $(\bm\xi_u,\xi_p)$, together with \eqref{eq:xi-error-equation-correct}, the continuity estimate \eqref{eq:cont_ext}, and the weak consistency estimate of Lemma~\ref{lem:weak-consistency-stab},
\begin{align}\label{eq:2st_term}
\|(\bm\xi_u,\xi_p)\|_h
&\le C \sup_{(\bm v_h,q_h)\neq(0,0)} \frac{\left| \mathcal B_h[(\bm\eta_u,\eta_p),(\bm v_h,q_h)]+ \mathcal C_h[(\Pi_h^*\bm u,\Pi_h^*p),(\bm v_h,q_h)] \right|}{\|(\bm v_h,q_h)\|_h } \nonumber \\
&\le C \left( \|(\bm\eta_u,\eta_p)\|_* + h\sum_{m=\pm} \varepsilon_m^{-1/2}\|\bm u^m\|_{2,\Omega^m} + h^2\sum_{m=\pm} \|p^m\|_{3,\Omega^m} \right).
\end{align}
Combining \eqref{ineq:triangle}, \eqref{eq:1st_term}, and \eqref{eq:2st_term} gives the desired estimate.
\end{proof}

Theorem~\ref{thm:source-error} gives the energy norm estimate. We next derive the $L^2$ estimate.
\begin{theorem}\label{thm:source-L2-error}
Define the dual problem: for $\bm g\in \bm L^2(\Omega)$, let $(\bm z,\tau)$ satisfy
\begin{equation}\label{eq:adjoint-problem-L2}
\mathcal B_h[(\bm v,q),(\bm z,\tau)] = (\bm v,\bm g)_\Omega \qquad \forall (\bm v,q)\in \bm V\times P.
\end{equation}
Assume the regularity
\begin{equation}\label{eq:adjoint-regularity-L2}
\sum_{m=\pm} \left( \varepsilon_m^{-1/2}\|\bm z^m\|_{2,\Omega^m} + \|\tau^m\|_{3,\Omega^m} \right) \le C\|\bm g\|_{0,\Omega},
\end{equation}
and the stability bound
\begin{equation}\label{eq:dual-stab-smallness}
\left| \mathcal C_h[(\bm v_h,q_h),(\bm\Pi_h^*\bm z,\Pi_h^*\tau)] \right| \le C h \|(\bm v_h,q_h)\|_h \|\bm g\|_{0,\Omega}
\qquad \forall (\bm v_h,q_h)\in \bm V^{\rm ext}_{h,\mathrm{per}}\times P^{\rm ext}_{h,\mathrm{per}}.
\end{equation}
Then
\begin{equation}\label{eq:L2-by-energy-final}
\begin{aligned}
\|\bm u-\bm u_h\|_{0,\Omega}
\le C h\Bigl[
\|(\bm u-\bm u_h,p-p_h)\|_*
+h\sum_{m=\pm}\varepsilon_m^{-1/2}\|\bm u^m\|_{2,\Omega^m}
+h^2\sum_{m=\pm}\|p^m\|_{3,\Omega^m}\Bigr].
\end{aligned}
\end{equation}
In particular, for the second-family N\'ed\'elec element of degree one and a piecewise $P_2$ multiplier space,
\begin{equation}\label{eq:L2-source-error-P2}
\|\bm u-\bm u_h\|_{0,\Omega} \le C h^2 \sum_{m=\pm} \left( \varepsilon_m^{-1/2}\|\bm u^m\|_{2,\Omega^m} + \|p^m\|_{3,\Omega^m} \right).
\end{equation}
\end{theorem}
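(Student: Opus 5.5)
The plan is a standard Aubin--Nitsche duality argument adapted to the unfitted, stabilized setting. Set $\bm e_u:=\bm u-\bm u_h$, $e_p:=p-p_h$ and take $\bm g:=\bm e_u$ in the dual problem \eqref{eq:adjoint-problem-L2}. Since $\bm u_h$ does not belong to $\bm V$, the first step is to argue that the dual identity extends to test pairs in $(\bm V+\bm V^{\mathrm{ext}}_{h,\mathrm{per}})\times(P+P^{\mathrm{ext}}_{h,\mathrm{per}})$. The justification is an adjoint consistency of the symmetric Nitsche form $\mathcal B_h$. Integrate by parts elementwise on $\Omega^\pm$ and use that $(\bm z,\tau)$ satisfies the strong adjoint equations together with the interface conditions $[\![\tau]\!]=0$, $[\![\gamma_t(\bm z)]\!]=0$ and $[\![\gamma_{\mathcal T}(\varepsilon^{-1}\nabla_{\bm k}\times\bm z)]\!]=0$. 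Then all interface terms involving the jumps of the test function are exactly reproduced by the Nitsche terms with the $\theta$-weighted averages. This yields
\[
\|\bm e_u\|_{0,\Omega}^2=\mathcal B_h[(\bm e_u,e_p),(\bm z,\tau)].
\]

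Next, subtract the discrete interpolant of the dual solution. Writing $(\bm z,\tau)=(\bm z-\Pi_h^*\bm z,\tau-\Pi_h^*\tau)+(\Pi_h^*\bm z,\Pi_h^*\tau)$ and using the weak Galerkin orthogonality of Lemma~\ref{lem:weak-galerkin}, with $(\bm v_h,q_h)=(\Pi_h^*\bm z,\Pi_h^*\tau)$, gives
\[
\|\bm e_u\|_{0,\Omega}^2=\mathcal B_h[(\bm e_u,e_p),(\bm z-\Pi_h^*\bm z,\tau-\Pi_h^*\tau)]+g_u(\bm u_h,\Pi_h^*\bm z)-S_h(p_h,\Pi_h^*\tau).
\]

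The first term is bounded by continuity of $\mathcal B_h$ on the extended spaces, \eqref{eq:cont_ext}, applied in its symmetric form with both arguments in $\|\cdot\|_*$. Combining this with Lemma~\ref{lem:interp-u-p} for $(\bm z,\tau)$ and the regularity \eqref{eq:adjoint-regularity-L2} gives
\[
C\|(\bm e_u,e_p)\|_*\,h\|\bm g\|_{0,\Omega}.
\]
This step requires a version of \eqref{eq:cont_ext} in which neither argument is discrete. That is plausible, because $\|\cdot\|_{\bm V^*}$ already contains the $h$-weighted trace terms of the averages, which is exactly what the Cauchy--Schwarz estimate of the Nitsche terms needs. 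I would state this as the extended continuity used here.

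For the stabilization terms, split $(\bm u_h,p_h)=(\Pi_h^*\bm u,\Pi_h^*p)-(\bm\xi_u,\xi_p)$, with $\bm\xi_u,\xi_p$ as in the proof of Theorem~\ref{thm:source-error}. This produces the term $-\mathcal C_h[(\bm\xi_u,\xi_p),(\Pi_h^*\bm z,\Pi_h^*\tau)]$. By the assumed bound \eqref{eq:dual-stab-smallness} it is at most $Ch\|(\bm\xi_u,\xi_p)\|_h\|\bm g\|_{0,\Omega}$, and \eqref{eq:2st_term} bounds $\|(\bm\xi_u,\xi_p)\|_h$ by the interpolation quantities. The remaining term $\mathcal C_h[(\Pi_h^*\bm u,\Pi_h^*p),(\Pi_h^*\bm z,\Pi_h^*\tau)]$ is a product of two small stabilizations. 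For each of $g_u$, $g_p$ and $J_h$, apply Cauchy--Schwarz to the pair of seminorms rather than using $\|\cdot\|_h$ of the dual interpolant. The bounds in the proof of Lemma~\ref{lem:weak-consistency-stab} give $g_u(\Pi_h^*\bm u,\Pi_h^*\bm u)^{1/2}\le Ch\sum\varepsilon_m^{-1/2}\|\bm u^m\|_2$ and the analogous $h^2$ bounds for $g_p$ and $J_h$. Applying the same bounds to $(\bm z,\tau)$ gives an extra factor of $h\|\bm g\|_{0,\Omega}$.

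Collecting the terms and dividing by $\|\bm e_u\|_{0,\Omega}=\|\bm g\|_{0,\Omega}$ yields \eqref{eq:L2-by-energy-final}. Inserting Theorem~\ref{thm:source-error} then gives \eqref{eq:L2-source-error-P2}.

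I expect the main obstacle to be the adjoint consistency step, namely checking that $\mathcal B_h[(\bm e_u,e_p),(\bm z,\tau)]=(\bm e_u,\bm g)$ holds for the nonconforming $\bm u_h$. Two points need care. First, the interface term $-\int_\Sigma[\![q]\!]\{\overline{\bm v\cdot\bm n}\}_\theta$ in $b_h$ must be matched, which requires continuity of $\bm z\cdot\bm n$ weighted appropriately, or $\theta$-independence once $[\![\tau]\!]=0$. Second, the $\bm k$-shifted interface flux must match as well. I would check these interface identities term by term first, on the two sides of each cut element.
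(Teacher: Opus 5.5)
Your proposal is correct and follows essentially the same route as the paper: the Aubin--Nitsche split of $\mathcal B_h[(\bm e_u,e_p),(\bm z,\tau)]$ at $(\Pi_h^*\bm z,\Pi_h^*\tau)$, then continuity in the $\|\cdot\|_*$ norms for the interpolation part, then weak Galerkin orthogonality with the splitting $(\bm u_h,p_h)=(\Pi_h^*\bm u,\Pi_h^*p)-(\bm\xi_u,\xi_p)$ for the stabilization part. The adjoint-consistency step you flag as the main obstacle is not needed, because the paper takes $\bm V$ and $P$ to be broken spaces with $\bm V^{\mathrm{ext}}_{h,\mathrm{per}}\subset\bm V$ and $P^{\mathrm{ext}}_{h,\mathrm{per}}\subset P$, so $(\bm e_u,e_p)$ is directly an admissible test pair in \eqref{eq:adjoint-problem-L2}; separately, your seminorm-wise Cauchy--Schwarz bound for $\mathcal C_h[(\Pi_h^*\bm u,\Pi_h^*p),(\Pi_h^*\bm z,\Pi_h^*\tau)]$ spells out what the paper only indicates by citing Lemma~\ref{lem:weak-consistency-stab} and \eqref{eq:dual-stab-smallness}.
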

\begin{proof}
We employ the Aubin--Nitsche duality argument \cite[Theorem~3.2.4]{ciarlet1981two}, and account for the ghost penalty term as in \cite[Theorem~2.10]{gurkan2019stabilized}. Let $\bm e_u:=\bm u-\bm u_h$, $e_p:=p-p_h$, and let $(\bm z,\tau)$ solve \eqref{eq:adjoint-problem-L2}. Taking the test function
$(\bm v,q)=(\bm e_u,e_p)$ in \eqref{eq:adjoint-problem-L2} gives
\begin{equation}\label{eq:L2-dual-split-correct}
\begin{aligned}
(\bm e_u,\bm g)_\Omega
= \mathcal B_h[(\bm e_u,e_p),(\bm z,\tau)]  
=\mathcal B_h[(\bm e_u,e_p),(\bm z-\bm\Pi_h^*\bm z,\tau-\Pi_h^*\tau)] +\mathcal B_h[(\bm e_u,e_p),(\bm\Pi_h^*\bm z,\Pi_h^*\tau)] .
\end{aligned}
\end{equation}
For the first term, continuity \eqref{eq:cont_ext}, Lemma~\ref{lem:interp-u-p} applied to $(\bm z,\tau)$, and the regularity \eqref{eq:adjoint-regularity-L2} give $\|(\bm z-\bm\Pi_h^*\bm z,\tau-\Pi_h^*\tau)\|_* \le C h \|\bm g\|_{0,\Omega}$, hence
\begin{equation}\label{eq:L2-first-final-correct}
\left| \mathcal B_h[(\bm e_u,e_p),(\bm z-\bm\Pi_h^*\bm z,\tau-\Pi_h^*\tau)] \right| \le C h \|(\bm e_u,e_p)\|_* \|\bm g\|_{0,\Omega}.
\end{equation}
For the second term, since $(\bm\Pi_h^*\bm z,\Pi_h^*\tau)$ lies in the discrete test space, Lemma~\ref{lem:weak-galerkin} (weak Galerkin orthogonality) gives
\begin{equation}\label{eq:weak-gal-L2-use}
\mathcal B_h[(\bm e_u,e_p),(\bm\Pi_h^*\bm z,\Pi_h^*\tau)] = g_u(\bm u_h,\bm\Pi_h^*\bm z)-S_h(p_h,\Pi_h^*\tau) =\mathcal C_h[(\bm u_h,p_h),(\bm\Pi_h^*\bm z,\Pi_h^*\tau)].
\end{equation}
Writing $(\bm u_h,p_h) =(\bm \Pi_h^*\bm u,\Pi_h^*p) -(\bm \Pi_h^*\bm u-\bm u_h,\Pi_h^*p-p_h)$ and splitting $\mathcal C_h$ accordingly, we use Lemma~\ref{lem:weak-consistency-stab} together with \eqref{eq:dual-stab-smallness} to bound the first part by
\begin{align}\label{eq:C-piu-bound-L2}
\left| \mathcal C_h[(\bm \Pi_h^*\bm u,\Pi_h^*p),
(\bm\Pi_h^*\bm z,\Pi_h^*\tau)] \right| 
&\le C h \left[ h\sum_{m=\pm} \varepsilon_m^{-1/2}\|\bm u^m\|_{2,\Omega^m}
+ h^2\sum_{m=\pm} \|p^m\|_{3,\Omega^m} \right]
\|\bm g\|_{0,\Omega}.
\end{align}
\eqref{eq:dual-stab-smallness} together with Theorem~\ref{thm:source-error} bounds the second part by
\begin{equation}
\begin{aligned}\label{eq:C-xi-bound-L2}
&\left| \mathcal C_h[ (\bm \Pi_h^*\bm u-\bm u_h,\Pi_h^*p-p_h), (\bm\Pi_h^*\bm z,\Pi_h^*\tau)] \right|
\le C h \|(\bm \Pi_h^*\bm u-\bm u_h,\Pi_h^*p-p_h)\|_h \|\bm g\|_{0,\Omega} \nonumber\\
&\le C h \left[ h\sum_{m=\pm} \varepsilon_m^{-1/2}\|\bm u^m\|_{2,\Omega^m} + h^2\sum_{m=\pm} \|p^m\|_{3,\Omega^m} \right] \|\bm g\|_{0,\Omega}.
\end{aligned}    
\end{equation}
Consequently,
\begin{equation}\label{eq:L2-second-term-total}
\left|
\mathcal B_h[(\bm e_u,e_p),(\bm\Pi_h^*\bm z,\Pi_h^*\tau)]
\right| \le C h \left[ h\sum_{m=\pm}
\varepsilon_m^{-1/2}\|\bm u^m\|_{2,\Omega^m} + h^2\sum_{m=\pm} \|p^m\|_{3,\Omega^m} \right]
\|\bm g\|_{0,\Omega}.
\end{equation}
Combining \eqref{eq:L2-second-term-total} with \eqref{eq:L2-first-final-correct} gives
\[
\begin{aligned}
|(\bm e_u,\bm g)_\Omega|
&\le C h\Bigl[\|(\bm e_u,e_p)\|_*
+h\sum_{m=\pm}\varepsilon_m^{-1/2}\|\bm u^m\|_{2,\Omega^m}
+h^2\sum_{m=\pm}\|p^m\|_{3,\Omega^m}\Bigr]
\|\bm g\|_{0,\Omega}.
\end{aligned}
\]
Taking the supremum over $\bm g$ with $\|\bm g\|_{0,\Omega}=1$ gives
\eqref{eq:L2-by-energy-final}. Applying~\eqref{eq:source-error-final}
and using $h\le1$ then yields~\eqref{eq:L2-source-error-P2}.
\end{proof}

\subsection{Convergence of the eigenvalue problem}
\label{subsec:eigen-conver}
This subsection applies the Babu\v{s}ka--Osborn spectral approximation
theory \cite{babuska1991eigenvalue,osborn1975spectral}
to derive the convergence of the discrete eigenpairs.

Recall the continuous solution operator 
$T:\bm L^2_{\mathrm{per}}(\Omega)\to \bm L^2_{\mathrm{per}}(\Omega)$ 
and the discrete solution operator 
$T_h:\bm L^2_{\mathrm{per}}(\Omega)\to \bm V^{\mathrm{ext}}_{h,\mathrm{per}}$ 
defined by \eqref{eq:continuous-source} and \eqref{eq:discrete-source},
respectively.

Theorems~\ref{thm:source-error} and~\ref{thm:source-L2-error} imply there is $C>0$, independent of $h$ and the interface position, with $\|(T-T_h)\bm f\|_{0,\Omega} \le C h^2 \|\bm f\|_{0,\Omega}$ for all $\bm f\in \bm L^2_{\mathrm{per}}(\Omega)$, so
\begin{equation}\label{eq:uniform-convergence}
\|T-T_h\|_{\mathcal L(\bm L^2_{\mathrm{per}}(\Omega))} \le C h^2 \to 0 \qquad \text{as } h\to 0 .
\end{equation}
Since $T_h\to T$ in operator norm, $\{T_h\}$ is collectively compact~\cite{osborn1975spectral}, and the Babu\v{s}ka--Osborn theory applies.

Let $\mu$ be a nonzero eigenvalue of $T$ with algebraic multiplicity $\ell$, and $\Gamma\subset\mathbb{C}$ a circle centered at $\mu$, lying in $\rho(T)$, enclosing no other point of $\sigma(T)$. For $h$ sufficiently small, define the spectral projections $E = \frac{1}{2\pi i}\int_{\Gamma} R_z(T)\,dz$, $E_h = \frac{1}{2\pi i}\int_{\Gamma} R_z(T_h)\,dz$, with $R_z(T)=(z-T)^{-1}$ and $R_z(T_h)=(z-T_h)^{-1}$. The eigenvalues $\mu_1(h),\ldots,\mu_\ell(h)$ of $T_h$ inside $\Gamma$ converge to $\mu$, and $\dim R(E_h)=\dim R(E)=\ell$. 

We first recall the following abstract result, which is the key tool for both the eigenspace and eigenvalue estimates.
\begin{theorem}[Osborn, {\cite[Theorems 1--3]{osborn1975spectral}}]
\label{thm:osborn-abstract}
Let $\phi_1,\ldots,\phi_\ell$ be a basis for $R(E)$. There exist constants $C_1,C_3>0$, independent of $h$, such that for $h$ sufficiently small,
\begin{align}\label{eq:osborn-eigenspace}
\hat{\delta}\bigl(R(E),R(E_h)\bigr)
&\le C_1 \|(T-T_h)|_{R(E)}\|_{\mathcal{L}(\bm L^2_{\mathrm{per}}(\Omega))},
\\
\label{eq:osborn-eigenvalue}
\left|\mu-\frac{1}{\ell}\sum_{r=1}^{\ell}\mu_r(h)\right|
&\le \frac{1}{\ell}\sum_{i,j=1}^{\ell}
\left|\bigl((T-T_h)\phi_i,\phi_j\bigr)\right|
+ C_3
\|(T-T_h)|_{R(E)}\|_{\mathcal{L}(\bm L^2_{\mathrm{per}}(\Omega))}^{2}.
\end{align}
\end{theorem}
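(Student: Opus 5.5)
The plan is to derive the result purely from resolvent perturbation theory, using only that $T$ is compact and self-adjoint on $\bm L^2_{\mathrm{per}}(\Omega)$ and that $\|T-T_h\|_{\mathcal L(\bm L^2_{\mathrm{per}}(\Omega))}\to0$ by \eqref{eq:uniform-convergence}.

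\emph{Step 1: uniform resolvent bound.} Since $\Gamma\subset\rho(T)$ is compact, $M:=\sup_{z\in\Gamma}\|R_z(T)\|<\infty$. I will write
\[
R_z(T_h)=R_z(T)\bigl(I-(T_h-T)R_z(T)\bigr)^{-1}.
\]
A Neumann series then gives $\sup_{z\in\Gamma}\|R_z(T_h)\|\le 2M$ as soon as $\|T-T_h\|\le 1/(2M)$. This makes $E_h$ well defined, and $\|E-E_h\|\to0$ gives $\dim R(E_h)=\ell$.

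\emph{Step 2: eigenspace estimate \eqref{eq:osborn-eigenspace}.} I will use the resolvent identity $R_z(T)-R_z(T_h)=R_z(T_h)(T-T_h)R_z(T)$. Because $R(E)$ is invariant under $R_z(T)$, for $\phi\in R(E)$ we have $R_z(T)\phi\in R(E)$. Hence
\[
\|(E-E_h)\phi\|\le \tfrac{|\Gamma|}{2\pi}\,2M^2\,\|(T-T_h)|_{R(E)}\|\,\|\phi\|.
\]
For $\phi\in R(E)$ this bounds $\operatorname{dist}(\phi,R(E_h))\le\|\phi-E_h\phi\|$, which is one half of the gap. Since both spaces have dimension $\ell$ and the one-sided gap is less than $1$ for small $h$, the symmetric gap $\hat\delta$ is controlled by the one-sided gap via Kato's lemma on equidimensional subspaces. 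This yields $C_1$.

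\emph{Step 3: eigenvalue estimate \eqref{eq:osborn-eigenvalue}.} For small $h$, Step 2 shows that $E_h|_{R(E)}:R(E)\to R(E_h)$ is an isomorphism with uniformly bounded inverse. Define $\hat T:=T|_{R(E)}$ and $\hat T_h:=(E_h|_{R(E)})^{-1}T_hE_h|_{R(E)}$, both operators on $R(E)$. Then $\hat T$ has the single eigenvalue $\mu$ with multiplicity $\ell$, and $\hat T_h$ is similar to $T_h|_{R(E_h)}$, so its eigenvalues are $\mu_1(h),\dots,\mu_\ell(h)$. Using that $E_h$ commutes with $T_h$ and that $T\phi\in R(E)$, one checks
\[
\hat T-\hat T_h=(E_h|_{R(E)})^{-1}E_h(T-T_h)|_{R(E)}.
\]
Therefore $\mu-\frac1\ell\sum_r\mu_r(h)=\frac1\ell\operatorname{tr}(\hat T-\hat T_h)$. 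I will evaluate this trace in an $L^2$-orthonormal basis $\{\phi_i\}$ of $R(E)$; this is allowed because $T$ is self-adjoint, so $E$ is an orthogonal projection. The trace then equals
\[
\sum_i\bigl((T-T_h)\phi_i,\phi_i\bigr)+\sum_i\bigl(\bigl[(E_h|_{R(E)})^{-1}E_h-I\bigr](T-T_h)\phi_i,\phi_i\bigr).
\]
The first sum is bounded by the stated double sum. For a general basis $\{\phi_i\}$, an $\ell\times\ell$ change of basis absorbs the constant.

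\emph{Main obstacle.} The hard part is showing the second sum is quadratic, i.e.\ $O(\|(T-T_h)|_{R(E)}\|^2)$, rather than merely linear. I would first move the operator onto the test function via adjoints. Since $\phi_i\in R(E)=R(E^*)$, the inner product can be rewritten as $((T-T_h)\phi_i,\,\chi_i)$ with
\[
\chi_i:=\bigl(E_h^*\bigl[(E_h|_{R(E)})^{-1}\bigr]^*-E^*\bigr)\phi_i .
\]
The vector $\chi_i$ should be bounded by $\|(E-E_h)|_{R(E)}\|$. To see this, note $E_h^*=\frac{1}{2\pi i}\int_{\bar\Gamma} R_{\bar z}(T_h^*)\,dz$, and repeat Step 2 for the adjoint pair $(T^*,T_h^*)$ restricted to $R(E^*)$. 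Self-adjointness of $T$ gives $(T^*-T_h^*)|_{R(E)}$ the same norm as $(T-T_h)|_{R(E)}$ up to constants. Multiplying the two linear factors gives the $C_3\|(T-T_h)|_{R(E)}\|^2$ term.
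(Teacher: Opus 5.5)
Steps 1 and 2, and Step 3 up to the trace identity, correctly reconstruct Osborn's own proof: the resolvent bound, Kato's lemma for equidimensional subspaces, the similarity $\hat T_h=(E_h|_{R(E)})^{-1}T_hE_h|_{R(E)}$, and the trace identity. The paper gives no argument beyond citing \cite{osborn1975spectral}, so everything rests on your last step, and that step is wrong as written.

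What the argument actually gives for the second sum is $C\|(T-T_h)|_{R(E)}\|\,\|(T^*-T_h^*)|_{R(E^*)}\|$. To see this, set $Q:=(E_h|_{R(E)})^{-1}E_h-E$. Then $Q=E(E_h-E)\bigl(I-(E_h-E)\bigr)^{-1}$, so $\chi_i=Q^*\phi_i=\bigl(I-(E_h-E)^*\bigr)^{-1}(E_h-E)^*\phi_i$. This is controlled by $\|(E_h-E)^*|_{R(E)}\|$, not by $\|(E_h-E)|_{R(E)}\|$.

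Your claim that self-adjointness of $T$ makes $\|(T^*-T_h^*)|_{R(E)}\|$ comparable to $\|(T-T_h)|_{R(E)}\|$ is false. Since $(T^*-T_h^*)|_{R(E)}=(T-T_h^*)|_{R(E)}$, you are really comparing $\|E(T-T_h)\|$ with $\|(T-T_h)E\|$. A counterexample: take $T=\mathrm{diag}(\mu,0)$ on $\mathbb C^2$ and $T_h=\bigl(\begin{smallmatrix}\mu&t^{1/2}\\ t&0\end{smallmatrix}\bigr)$. Then $\|T-T_h\|=t^{1/2}\to0$, $((T-T_h)e_1,e_1)=0$ and $\|(T-T_h)|_{R(E)}\|=t$. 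Yet the perturbed eigenvalue is $\mu+t^{3/2}/\mu+O(t^3)$, so no bound $C_3t^2$ can hold.

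The missing ingredient is that $T_h$ itself is self-adjoint on $\bm L^2_{\mathrm{per}}(\Omega)$. Test the discrete source problem for $\bm g$ with $(T_h\bm f,T_{p,h}\bm f)$, and the one for $\bm f$ with $(T_h\bm g,T_{p,h}\bm g)$. The Hermitian symmetry of $\mathcal B_h+\mathcal C_h$, which the paper invokes in the proof of Theorem~\ref{thm:main-spectral}, then gives $(T_h\bm f,\bm g)=(\bm f,T_h\bm g)$. Hence $E_h^*=E_h$ and $(T^*-T_h^*)|_{R(E)}=(T-T_h)|_{R(E)}$. Your Step 2 then bounds $\|\chi_i\|$ directly, and the quadratic term follows.

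A smaller point: your remark that, for a general basis, an $\ell\times\ell$ change of basis absorbs the constant does not work, because the first term on the right-hand side carries no constant. Take $\phi_i=s\psi_i$ with $\{\psi_i\}$ orthonormal and $s\in(0,1)$ fixed. The double sum shrinks by a factor $s^2$ while the left-hand side does not, and with $T_h=T-\delta_hE$, $\delta_h\to0$, the inequality fails for small $\delta_h$. The estimate should therefore be read with an $L^2$-orthonormal basis. That is exactly what Osborn's dual-basis term $\sum_j|((T-T_h)\phi_j,\phi_j^*)|$ reduces to when $E$ is an orthogonal projection.
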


Combining Theorem~\ref{thm:osborn-abstract} with the operator error estimate \eqref{eq:uniform-convergence}, we obtain the main theoretical result of this section.
\begin{theorem}\label{thm:main-spectral}
Assume the hypotheses of Theorems~\ref{thm:source-error}
and~\ref{thm:source-L2-error}. Let
$\omega_i^2$ be an eigenvalue of the original problem with multiplicity $\ell_i$
and eigenspace $R(E_i)$. Then, for $h$ small enough, there are exactly $\ell_i$
discrete eigenvalues $\omega_{i1,h}^2,\ldots,\omega_{i\ell_i,h}^2$ converging to
$\omega_i^2$. Setting $\widehat{\omega}_{i,h}^2
=\frac{1}{\ell_i}\sum_{j=1}^{\ell_i}\omega_{ij,h}^2$ and letting $\widehat
R(E_{h,i})$ denote the direct sum of the corresponding discrete eigenspaces,
\begin{equation}\label{eq:eigenvalue-final}
\left|\omega_i^2-\widehat{\omega}_{i,h}^2\right|\le Ch^2, \qquad \hat\delta(R(E_i),\widehat R(E_{h,i}))\le Ch^2,
\end{equation}
with $C>0$ independent of $h$ and of the interface position.
\end{theorem}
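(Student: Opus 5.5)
The plan is to apply Theorem~\ref{thm:osborn-abstract} with the operator-norm estimate \eqref{eq:uniform-convergence}, and then translate statements about $T$ and $T_h$ into statements about $\omega^2$ and $\omega_h^2$. First, I will identify the spectra. If $T\bm f=\mu\bm f$ with $\mu\neq0$, then $(\bm u,p)=(T\bm f,T_p\bm f)$ solves the continuous source problem with datum $\bm f=\mu^{-1}\bm u$. Hence $\bm u$ is an eigenfunction with $\omega^2=\mu^{-1}$. Testing with $\bm v=\nabla_{\bm k}q$ and using $\bm k\neq\bm 0$ kills $p$, and the correspondence also runs in reverse. The same holds for $T_h$ and \eqref{eq:final}. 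The discrete source operator is well defined by Theorem~\ref{thm:uni-stability}, and $\mu_h=\omega_h^{-2}$. So the eigenvalue $\omega_i^2$ with multiplicity $\ell_i$ corresponds to $\mu_i=\omega_i^{-2}$ with the same eigenspace $R(E_i)$. Since $T$ is self-adjoint, algebraic and geometric multiplicities coincide.

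Second, I will use \eqref{eq:uniform-convergence}. For $h$ small the circle $\Gamma$ around $\mu_i$ stays in $\rho(T_h)$, and $\|E-E_h\|\to0$. This gives exactly $\ell_i$ discrete eigenvalues $\mu_{ij}(h)$ inside $\Gamma$, which is the assertion that exactly $\ell_i$ values $\omega_{ij,h}^2$ converge to $\omega_i^2$. No other discrete eigenvalue can enter near $\omega_i^2$, so there is no spectral pollution. This step depends on the uniformity of the constant in \eqref{eq:uniform-convergence} with respect to the interface position, which was inherited from Theorems~\ref{thm:uni-stability}, \ref{thm:source-error} and \ref{thm:source-L2-error}. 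The eigenspace bound then follows directly from \eqref{eq:osborn-eigenspace}:
\[
\hat\delta\bigl(R(E_i),\widehat R(E_{h,i})\bigr)\le C_1\|(T-T_h)|_{R(E_i)}\|\le Ch^2 .
\]

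Third, the eigenvalue estimate. In \eqref{eq:osborn-eigenvalue}, choose an $L^2$-orthonormal basis $\phi_j$ of $R(E_i)$. The quadratic term is then $O(h^4)$, and the bilinear term satisfies $|((T-T_h)\phi_i,\phi_j)|\le\|(T-T_h)\phi_i\|_0\le Ch^2$. This already gives $O(h^2)$ without any duality-squared argument, and that is all the theorem claims. So $|\mu_i-\hat\mu_{i,h}|\le Ch^2$, where $\hat\mu_{i,h}$ is the arithmetic mean of the $\mu_{ij}(h)$.

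The main obstacle is converting this bound to the mean of the $\omega^2$ values, because $\widehat\omega_{i,h}^2$ is the mean of the reciprocals and not the reciprocal of the mean. I will handle this by bounding each discrete eigenvalue separately. Since all $\mu_{ij}(h)$ lie in a fixed neighbourhood of $\mu_i>0$, the map $x\mapsto x^{-1}$ is Lipschitz there. It therefore suffices to show $\max_j|\mu_i-\mu_{ij}(h)|\le Ch^2$. For a self-adjoint $T$ and $\|T-T_h\|\le Ch^2$, this follows from the gap bound: each discrete eigenvector $\psi\in R(E_h)$ lies within $Ch^2$ of $R(E)$, which gives $|\mu_{ij}(h)-\mu_i|\le C\|(T-T_h)\|$ up to terms of order $\|E-E_h\|$. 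Alternatively, one can apply \eqref{eq:osborn-eigenvalue} only after checking that the constants $C_1,C_3$ depend on $\Gamma$ and $T$ alone. Then
\[
\Bigl|\omega_i^2-\frac1{\ell_i}\sum_j\omega_{ij,h}^2\Bigr|\le\frac1{\ell_i}\sum_j\frac{|\mu_{ij}(h)-\mu_i|}{\mu_i\,\mu_{ij}(h)}\le Ch^2 .
\]
The constant is independent of the interface position, since every constant used above is.
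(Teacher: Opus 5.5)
Your proposal is correct. It uses the same eigenspace step as the paper, \eqref{eq:osborn-eigenspace} combined with \eqref{eq:uniform-convergence}, but a different argument for the eigenvalues.

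\textbf{The paper's route.} It expands $((T-T_h)\bm\phi_j,\bm\phi_k)$ using self-adjointness and the weak Galerkin orthogonality of Lemma~\ref{lem:weak-galerkin}. This gives $\mathcal C_h[(\bm u_{j,h},p_{j,h}),(\bm u_{k,h},p_{k,h})]$ plus $\mathcal B_h$ evaluated on two source-problem errors, and the paper bounds both by products of $O(h)$ quantities. It then concludes with ``translating back via $\omega^2=\mu^{-1}$''.

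\textbf{Your route.} You use the direct bound $|((T-T_h)\phi_i,\phi_j)|\le\|T-T_h\|\le Ch^2$, which gives the same order. The paper's decomposition is the standard eigenvalue-doubling device. It beats the direct bound only when the energy-norm rate exceeds one, and Theorem~\ref{thm:source-error} gives rate one. So here it gains nothing, while needing extra ingredients: Hermitian symmetry of $\mathcal B_h$ and $\mathcal C_h$, a Cauchy--Schwarz step for the indefinite form $\mathcal C_h=g_u-S_h$, and bounds on $\mathcal C_h$ at the discrete solutions.

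\textbf{The reciprocal issue.} You correctly flag that $\widehat\omega_{i,h}^2$ is a mean of reciprocals, which the paper passes over. Your per-eigenvalue bound can be made cleaner. If $T_h\psi=\mu\psi$ with $\|\psi\|_{0,\Omega}=1$, self-adjointness of $T$ gives $\operatorname{dist}(\mu,\sigma(T))\le\|(T-\mu)\psi\|_{0,\Omega}=\|(T-T_h)\psi\|_{0,\Omega}\le Ch^2$. Once $Ch^2<\operatorname{dist}(\Gamma,\sigma(T))$, the only point of $\sigma(T)$ that close to a $\mu$ inside $\Gamma$ is $\mu_i$. So no ``terms of order $\|E-E_h\|$'' are needed.

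With this bound in hand, your step through \eqref{eq:osborn-eigenvalue} is redundant. Your suggested alternative of reapplying \eqref{eq:osborn-eigenvalue} would not settle the reciprocal issue on its own, since it controls only the mean of the $\mu_{ij}(h)$.
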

\begin{proof}
For the eigenspace estimate, \eqref{eq:osborn-eigenspace} and \eqref{eq:uniform-convergence} give
\[
  \hat\delta(R(E_i),\widehat R(E_{h,i})) \le C_1\|(T-T_h)|_{R(E_i)}\|_{\mathcal{L}(\bm L_{per}^2)}
   = \sup_{\substack{\bm v\in R(E_i)\\ \|\bm v\|_{0,\Omega}=1}} \|(T-T_h)\bm v\|_{\mathcal{L}(\bm L_{per}^2)} \le Ch^2.
\]
For the eigenvalue estimate, since $\omega_i^2 = \mu_i^{-1}$ and $\mu_{ij,h} = \omega_{ij,h}^{-2}$, it suffices to work with $\mu_i,\mu_{ij,h}$, the second term in \eqref{eq:osborn-eigenvalue} is $O(h^4)$ by \eqref{eq:uniform-convergence}, so we only need to bound $|((T-T_h)\bm\phi_j,\bm\phi_k)|$.

Let $(\bm u_j, p_j) = (T\bm\phi_j, T_p\bm\phi_j)$ and $(\bm u_{j,h}, p_{j,h}) = (T_h\bm\phi_j, T_{p,h}\bm\phi_j)$, and similarly for index $k$. Since $T$ is self-adjoint and $(\bm u_k, p_k)$ solves  \eqref{eq:continuous-source} with $\bm\phi_k$, we have
\[
\begin{aligned}
   &\bigl((T-T_h)\bm\phi_j,\,\bm\phi_k\bigr)
  = \bigl(\bm\phi_j,\, (T-T_h)\bm\phi_k\bigr)
  = \bigl(\bm\phi_j,\, T\bm\phi_k\bigr)-\bigl(\bm\phi_j,\, T_h\bm\phi_k\bigr)\\
  =&\mathcal B_h[(T\bm\phi_j,T_p\bm\phi_j),(T\bm\phi_k,T_p\bm\phi_k)]-\mathcal B_h[(T_h\bm\phi_j,T_{p,h}\bm\phi_j),(T_h\bm\phi_k,T_{p,h}\bm\phi_k)] 
  - \mathcal C_h[(T_h\bm\phi_j,T_{p,h}\bm\phi_j),(T_h\bm\phi_k,T_{p,h}\bm\phi_k)] \\
  =&\mathcal B_h[(\bm u_j,p_j),(\bm u_k,p_k)]
  -\mathcal B_h[(\bm u_{j,h},p_{j,h}),(\bm u_{k,h},p_{k,h})]
  -\mathcal C_h[(\bm u_{j,h},p_{j,h}),(\bm u_{k,h},p_{k,h})] 
\end{aligned}
\]
By bilinearity of \(\mathcal B_h\), we have
\[
\begin{aligned}
&\mathcal B_h[(\bm u_j,p_j),(\bm u_k,p_k)]-\mathcal B_h[(\bm u_{j,h},p_{j,h}),(\bm u_{k,h},p_{k,h})]\\
&=\mathcal B_h[(\bm u_j-\bm u_{j,h},p_j-p_{j,h}),(\bm u_{k,h},p_{k,h})]
+\mathcal B_h[(\bm u_{j,h},p_{j,h}),(\bm u_k-\bm u_{k,h},p_k-p_{k,h})]\\
&+\mathcal B_h[(\bm u_j-\bm u_{j,h},p_j-p_{j,h}),
             (\bm u_k-\bm u_{k,h},p_k-p_{k,h})] .
\end{aligned}
\]
Taking $(\bm v_h,q_h)=(\bm u_{k,h},p_{k,h})$
in Lemma~\ref{lem:weak-galerkin}, we get
\[
\mathcal B_h[(\bm u_j-\bm u_{j,h},p_j-p_{j,h}),
(\bm u_{k,h},p_{k,h})]=\mathcal C_h[(\bm u_{j,h},p_{j,h}),(\bm u_{k,h},p_{k,h})].
\]
Using the perturbed weak Galerkin orthogonality for the index \(k\), together with the
Hermitian symmetry of \(\mathcal B_h\) and \(\mathcal C_h\), we obtain
\[
\begin{aligned}
&\mathcal B_h[(\bm u_{j,h},p_{j,h}),(\bm u_k-\bm u_{k,h},p_k-p_{k,h})]  
=\overline{ \mathcal B_h[ (\bm u_k-\bm u_{k,h},p_k-p_{k,h}), (\bm u_{j,h},p_{j,h})]} \\
&=\overline{ \mathcal C_h[ (\bm u_{k,h},p_{k,h}),(\bm u_{j,h},p_{j,h})]} 
=\mathcal C_h[(\bm u_{j,h},p_{j,h}),(\bm u_{k,h},p_{k,h})].
\end{aligned}
\]
Then we get 
\begin{align*}
\bigl((T-T_h)\bm\phi_j,\,\bm\phi_k\bigr)
=\mathcal C_h[(\bm u_{j,h},p_{j,h}),(\bm u_{k,h},p_{k,h})]
+\mathcal B_h[(\bm u_j-\bm u_{j,h},p_j-p_{j,h}),(\bm u_k-\bm u_{k,h},p_k-p_{k,h})].
\end{align*}
By the continuity of $\mathcal B_h$ and the error estimate in Theorem~\ref{thm:source-error},
\begin{align}\label{bh_error}
&\left| \mathcal B_h[(\bm u_j-\bm u_{j,h},p_j-p_{j,h}),(\bm u_k-\bm u_{k,h},p_k-p_{k,h})]\right| \\
&\qquad\le C\|(\bm u_j-\bm u_{j,h},p_j-p_{j,h})\|_*  \|(\bm u_k-\bm u_{k,h},p_k-p_{k,h})\|_*  \le Ch^2
.\nonumber 
\end{align}
By the Cauchy--Schwarz inequality associated with $\mathcal C_h$ and Lemma~\ref{lem:weak-consistency-stab},
\begin{equation}\label{ch_error}
\left| \mathcal C_h[(\bm u_{j,h},p_{j,h}),(\bm u_{k,h},p_{k,h})]
\right| \le |{\mathcal C_h}(\bm u_{j,h},p_{j,h})|\, |{\mathcal C_h}(\bm u_{k,h},p_{k,h})|
\le C h^2 .
\end{equation}
Combining \eqref{bh_error} and \eqref{ch_error}, and inserting them into \eqref{eq:osborn-eigenvalue}
yields $|\mu_i-\widehat\mu_{i,h}|\le Ch^2$, and translating back via
$\omega^2=\mu^{-1}$ gives $\bigl|\omega_i^2-\widehat\omega_{i,h}^2\bigr|\le Ch^2$.

\end{proof}

%% file: section6.tex
\section{Numerical Results}\label{sec:numerical-results}
We assess the convergence and accuracy of the proposed method through
eigenvalue convergence tests and band-structure comparisons.
\subsection{Convergence rates of eigenvalues}
\label{subsec:eig-convergence}
We validate the predicted second-order convergence of the discrete
eigenvalues. We first consider
a homogeneous medium, for which the exact eigenvalues are known, followed by
an inhomogeneous medium containing high-contrast inclusions with complex interfaces.

Let $\omega_{j,h_m}^{2}$ denote the $j$-th eigenvalue computed on a mesh
of size $h_m$, and let $\omega_{j,\mathrm{ref}}^{2}$ denote the corresponding
reference value. We define the absolute and relative eigenvalue errors by $E_j^{(m)}$ and $e_j^{(m)}$, respectively,
\begin{equation}
  \label{eq:eig-errors}
  E_j^{(m)}
  =
  \left|\omega_{j,h_m}^{2}-\omega_{j,\mathrm{ref}}^{2}\right|
  \qquad\text{and}\qquad
  e_j^{(m)}
  =
  \frac{
    \left|\omega_{j,h_m}^{2}-\omega_{j,\mathrm{ref}}^{2}\right|
  }{
    \left|\omega_{j,\mathrm{ref}}^{2}\right|
  }.
\end{equation}
For the homogeneous problems, $\omega_{j,\mathrm{ref}}^2=\omega_j^2$
is the exact eigenvalue. 
For the inhomogeneous problem, $\omega_{j,\mathrm{ref}}^2$ is the numerical eigenvalue computed on a very fine mesh.
The observed convergence order is
\begin{equation}
  \label{eq:rate}
  p_{j}^{(m)}
  =
  \frac{
    \log\left(\mathcal{E}_j^{(m)}/\mathcal{E}_j^{(m+1)}\right)
  }{
    \log\left(h_m/h_{m+1}\right)
  },
  \qquad m\ge 1,
\end{equation}
where $\mathcal{E}_j^{(m)}$ denotes either the absolute error, $E_j^{(m)}$, or relative error, $e_j^{(m)}$.

\subsubsection{Homogeneous medium}
\label{sssec:hom-conv}
For a homogeneous medium of air ($\varepsilon \equiv 1$), the eigenfunctions of~\eqref{eq:Model1} can be represented as plane waves, such that
\begin{equation}
  \label{eq:exact_eig}
  \mathbf{H}(\bm{x}) = \mathbf{C}_{\mathbf{I}}\,
    \mathrm{e}^{\mathrm{i}(\boldsymbol{k} + \mathbf{I}) \cdot \bm{x}},
\end{equation}
where $\mathbf{I} \in 2\pi\,\mathbb{Z}^{d}$ ($d=2,3$) is a reciprocal lattice
vector, and $\mathbb{Z}$ denotes the set of integers. The constant vector $\mathbf{C}_{\mathbf{I}}$ satisfies the divergence
constraint $(\boldsymbol{k} + \mathbf{I}) \cdot \mathbf{C}_{\mathbf{I}} = 0$.
The corresponding analytical eigenvalues~\cite[Section~9.1]{lu2017discontinuous} are
\begin{equation}\label{eq:exact_eigval}
  \omega_{\mathbf{I}}^{2}(\bm{k}) 
  = \lvert \boldsymbol{k} + \mathbf{I} \rvert^{2}.
\end{equation}
For each fixed $\bm{k}$, these eigenvalues are arranged in
nondecreasing order, counting multiplicities, and denoted by
$\omega_j^2(\bm{k})$, $j=1,2,\ldots$.
We suppress the dependence on $\bm{k}$ below.

\paragraph{Two-dimensional case.}
At refinement level $m$, we partition $\Omega = [0,1]^{2}$ into
$N_m^2$ uniform squares and subdivide each square into two triangles
along the same diagonal. The resulting mesh size is $h_m=\sqrt{2}/N_m$.
We fix $\bm{k} = (\pi, 0)$ as a representative example. 
By~\eqref{eq:exact_eigval}, for $\bm k=(\pi,0)$, the first nine exact eigenvalues, counting multiplicities, are
$\omega_1^2=\omega_2^2=\pi^2$,
$\omega_3^2=\omega_4^2=\omega_5^2=\omega_6^2=5\pi^2$,
$\omega_7^2=\omega_8^2=9\pi^2$,
$\omega_9^2=13\pi^2$.
Table~\ref{tab:eigconv_final} reports
the errors and observed convergence rates of the first nine eigenvalues over
four levels of uniform mesh refinement, using second-family N\'ed\'elec elements of degree one.
\begin{table}[htbp]
  \centering
  \caption{Convergence of the first nine eigenvalues of the 2D homogeneous
  case with $\boldsymbol{k} = (\pi, 0)$, using second-family N\'ed\'elec elements of degree one with the MUFEM formulation~\eqref{eq:final}.}
  \label{tab:eigconv_final}
  \renewcommand{\arraystretch}{1.1}
  \fontsize{10}{12}\selectfont
  \setlength{\tabcolsep}{5pt}
  \begin{tabular*}{\textwidth}{@{\extracolsep{\fill}}c|cc|cc|cc|cc@{}}
    \toprule
    \multirow{2}{*}{$\omega_j^{2}$}
      & \multicolumn{2}{c|}{$N_1 = 10$}
      & \multicolumn{2}{c|}{$N_2 = 20$}
      & \multicolumn{2}{c|}{$N_3 = 40$}
      & \multicolumn{2}{c}{$N_4 = 80$} \\
      & $E_j^{(1)}$ & Order
      & $E_j^{(2)}$ & Order
      & $E_j^{(3)}$ & Order
      & $E_j^{(4)}$ & Order \\
    \midrule
    $\omega^{2}_{1}=\pi^2$ & 0.0000 & ---   & 0.0000 & ---   & 0.0000 & ---   & 0.0000 & --- \\
    $\omega^{2}_{2}=\pi^2$ & 0.1617 & ---   & 0.0405 & 1.9963 & 0.0101 & 1.9988 & 0.0025 & 1.9997 \\
    $\omega^{2}_{3}=5\pi^2$ & 0.9129 & ---   & 0.2315 & 1.9791 & 0.0581 & 1.9946 & 0.0145 & 1.9987 \\
    $\omega^{2}_{4}=5\pi^2$ & 0.1266 & ---   & 0.0302 & 2.0651 & 0.0075 & 2.0173 & 0.0019 & 2.0044 \\
    $\omega^{2}_{5}=5\pi^2$ & 0.0792 & ---   & 0.0171 & 2.2112 & 0.0041 & 2.0566 & 0.0010 & 2.0144 \\
    $\omega^{2}_{6}=5\pi^2$ & 2.1052 & ---   & 0.5151 & 2.0310 & 0.1281 & 2.0078 & 0.0320 & 2.0020 \\
    $\omega^{2}_{7}=9\pi^2$ & 1.5941 & ---   & 0.3937 & 2.0174 & 0.0982 & 2.0039 & 0.0245 & 2.0009 \\
    $\omega^{2}_{8}=9\pi^2$ & 0.9288 & ---   & 0.2598 & 1.8379 & 0.0670 & 1.9562 & 0.0169 & 1.9889 \\
    $\omega^{2}_{9}=13\pi^2$ & 3.2420 & ---   & 0.8224 & 1.9790 & 0.2063 & 1.9948 & 0.0516 & 1.9987 \\
    \bottomrule
  \end{tabular*}
\end{table}
Except for the first eigenvalue, whose errors vanish at the reported
precision, the observed orders in Table~\ref{tab:eigconv_final}
approach two.

\paragraph{Three-dimensional case.}
At refinement level $m$, we partition the unit cube
$\Omega=[0,1]^3$ into $N_m^3$ uniform cubic cells and subdivide each cube
into six tetrahedra using the same vertex connectivity. The resulting
mesh size is $h_m=\sqrt{3}/N_m$.
For $\boldsymbol{k}=(\pi,0,0)$, the first two distinct analytical eigenvalues are
$\omega_1^2=\cdots=\omega_4^2=\pi^2$ and $\omega_5^2=\cdots=\omega_{20}^2=5\pi^2$.
Except for the first two modes, which reproduce $\pi^2$ up to machine precision, the numerical
eigenvalues exhibit approximately second-order convergence, as reported in Table~\ref{tab:eigconv_3d_hom}.
\begin{table}[htbp]
  \centering
  \caption{Convergence of the first nine eigenvalues for the 3D
  homogeneous case with $\boldsymbol{k}=(\pi,0,0)$, using second-family N\'ed\'elec elements of degree one with the MUFEM formulation~\eqref{eq:final}.}
  \label{tab:eigconv_3d_hom}
  \renewcommand{\arraystretch}{1.1}
  \fontsize{10}{12}\selectfont
  \setlength{\tabcolsep}{5pt}
  \begin{tabular*}{\textwidth}{@{\extracolsep{\fill}}c|cc|cc|cc|cc@{}}
    \toprule
    \multirow{2}{*}{$\omega_j^2$}
      & \multicolumn{2}{c|}{$N_1=8$} & \multicolumn{2}{c|}{$N_2=11$}
      & \multicolumn{2}{c|}{$N_3=14$}& \multicolumn{2}{c}{$N_4=17$} \\
      & $E_j^{(1)}$ & Order & $E_j^{(2)}$ & Order
      & $E_j^{(3)}$ & Order & $E_j^{(4)}$ & Order \\
    \midrule
    $\omega_1^2=\pi^2$ & 0.0000 & ---& 0.0000 & ---& 0.0000 & ---& 0.0000 & --- \\
    $\omega_2^2=\pi^2$ & 0.0000 & --- & 0.0000 & --- & 0.0000 & ---& 0.0000 & --- \\
    $\omega_3^2=\pi^2$ & 1.2798 & --- & 0.6769 & 2.0001 & 0.4172 & 2.0069  & 0.2832 & 1.9952 \\
    $\omega_4^2=\pi^2$ & 1.3363 & ---& 0.7063 & 2.0025& 0.4356 & 2.0041& 0.2958 & 1.9938 \\
    $\omega_5^2=5\pi^2$& 1.0047 & ---& 0.5393 & 1.9539& 0.3348 & 1.9767 & 0.2281 & 1.9758 \\
    $\omega_6^2=5\pi^2$& 1.0218 & ---& 0.5463 & 1.9663& 0.3379 & 1.9923& 0.2300 & 1.9811 \\
    $\omega_7^2=5\pi^2$& 1.0291 & ---& 0.5480 & 1.9788& 0.3386 & 1.9962& 0.2303 & 1.9862 \\
    $\omega_8^2=5\pi^2$& 1.0471 & ---& 0.5557 & 1.9896& 0.3426 & 2.0051& 0.2325 & 1.9980 \\
    $\omega_9^2=5\pi^2$& 1.2551 & ---& 0.6756 & 1.9450& 0.4197 & 1.9742& 0.2862 & 1.9717 \\
    \bottomrule
  \end{tabular*}
\end{table}

\subsubsection{High-contrast flower-shaped inclusion}
\label{sssec:flower-conv}
We consider a more challenging problem involving a nonconvex curved interface, defined by
\begin{equation}
  \label{eq:flower-levelset}
  \Omega^{-} \;=\; \bigl\{\bm x\in\Omega \;\bigl|\; \varphi_{\star}(\bm x) < 0\bigr\}
  \qquad\text{where}\qquad
  \varphi_{\star}(x,y) = r^{2} - r_{0}(\theta)^{2},
\end{equation}
such that $r = \sqrt{(x-0.5)^{2}+(y-0.5)^{2}}$, $\theta = \operatorname{atan2}(y-0.5,\,x-0.5)$ and $r_{0}(\theta) = \tfrac{1}{5} + \tfrac{s}{7}\sin(5\theta)$ with $s=0.5$, which we refer to as a "flower-shaped" inclusion (see Fig.~\ref{fig2dcell}). Further, we use a high contrast between the inclusion and the background matrix, such that
\begin{equation}
  \label{eq:flower-eps}
  \varepsilon(\bm x) =
  \begin{cases}
    \varepsilon^{-} = 12, & \bm x\in\Omega^{-},\\[2pt]
    \varepsilon^{+} = 1,   & \bm x\in\Omega^{+}.
  \end{cases}
\end{equation}
We use the same uniform background triangulation as in the two-dimensional
homogeneous case, with $h_m=\sqrt{2}/N_m$ and $N_m=5,10,20,40$.
Analytical eigenvalues are unavailable for this geometry, so we use a refined reference computation at $\bm{k}=(\pi,0)$. Table~\ref{tab:eigconv_nonhom_flower} reports the relative errors and observed convergence orders. The errors decrease consistently under mesh refinement, and the observed orders on the finest mesh support an
approximately second-order convergence trend.

\begin{table}[htbp]
  \centering
  \caption{Relative errors of the first nine eigenvalues for the 2D
  inhomogeneous flower-shaped inclusion with $\boldsymbol{k}=(\pi,0)$, computed with the MUFEM formulation~\eqref{eq:final}.
  }
  \label{tab:eigconv_nonhom_flower}
  \renewcommand{\arraystretch}{1.1}
  \fontsize{10}{12}\selectfont
  \setlength{\tabcolsep}{5pt}
  \begin{tabular*}{\textwidth}{@{\extracolsep{\fill}}c|cc|cc|cc|cc@{}}
    \toprule
    \multirow{2}{*}{$\omega_j^{2}$}
      & \multicolumn{2}{c|}{$N_1 = 5$}
      & \multicolumn{2}{c|}{$N_2 = 10$}
      & \multicolumn{2}{c|}{$N_3 = 20$}
      & \multicolumn{2}{c}{$N_4 = 40$} \\
      & $e_j^{(1)}$ & Order
      & $e_j^{(2)}$ & Order
      & $e_j^{(3)}$ & Order
      & $e_j^{(4)}$ & Order \\
    \midrule
    $\omega^{2}_{1}$ & 0.0805 & ---    & 0.0137 & 2.5542 & 0.0051 & 1.4334 & 0.0013 & 1.9162 \\
    $\omega^{2}_{2}$ & 0.0991 & ---    & 0.0219 & 2.1813 & 0.0062 & 1.8166 & 0.0017 & 1.9087 \\
    $\omega^{2}_{3}$ & 0.0591 & ---    & 0.0117 & 2.3354 & 0.0051 & 1.2057 & 0.0009 & 2.4845 \\
    $\omega^{2}_{4}$ & 0.0950 & ---    & 0.0249 & 1.9339 & 0.0073 & 1.7620 & 0.0012 & 2.6472 \\
    $\omega^{2}_{5}$ & 0.0950 & ---    & 0.0157 & 2.5928 & 0.0048 & 1.7101 & 0.0012 & 1.9661 \\
    $\omega^{2}_{6}$ & 0.1005 & ---    & 0.0277 & 1.8595 & 0.0095 & 1.5416 & 0.0015 & 2.6522 \\
    $\omega^{2}_{7}$ & 0.1937 & ---    & 0.0462 & 2.0670 & 0.0100 & 2.2068 & 0.0018 & 2.4813 \\
    $\omega^{2}_{8}$ & 0.0864 & ---    & 0.0282 & 1.6165 & 0.0080 & 1.8129 & 0.0017 & 2.2334 \\
    $\omega^{2}_{9}$ & 0.1836 & ---    & 0.0529 & 1.7944 & 0.0152 & 1.7964 & 0.0025 & 2.6037 \\
    \bottomrule
  \end{tabular*}
\end{table}

\subsection{Validation of band structures}
\label{subsec:band-structures}
We assess MUFEM on the unit cells $\Omega=[0,1]^d$ ($d=2,3$)
by comparing the computed band structures~\eqref{eq:bandstructure}
with PWE results for low-contrast media~\cite{sukhoivanov2009photonic}
and discontinuous Galerkin (DG) benchmark results for high-contrast
media~\cite{lu2017discontinuous}.
For these unit cells, the corresponding Brillouin zones
\eqref{eq:Brillouinzone} are shown in Fig.~\ref{fig:Brillouin-zones}.
As is standard, symmetry arguments can be used to restrict the computations to the \textit{irreducible} Brillouin zones (yellow regions in Fig.~\ref{fig:Brillouin-zones}) without loss of generality,
and we study the \textit{band diagrams}, obtained by restricting the band structures to selected high-symmetry paths along the boundaries of these regions. For brevity, we omit the subscript $h$ from the computed bands. Agreement is quantified by the mean absolute difference in
normalized frequency, averaged over the reported bands and
sampled quasi-momenta.

The two-dimensional examples comprise a flower-shaped inclusion
with a nonconvex interface, rotated elliptic inclusions to assess
robustness with respect to interface orientation, and a circular
inclusion as a high-contrast benchmark. We extend the validation to three-dimensional photonic crystals with popcorn-shaped and scaffold inclusions. 

\begin{figure}[htbp]
  \centering
  \begin{subfigure}{0.5\linewidth}
    \centering
    \includegraphics[width=0.72\linewidth]{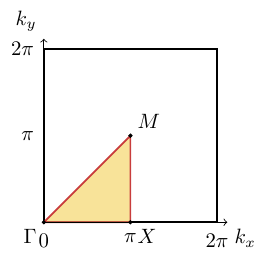}
    \caption{Two-dimensional square lattice}
    \label{fig:2dBrillouin}
  \end{subfigure}
  \begin{subfigure}{0.42\linewidth}
    \centering
    \includegraphics[width=0.72\linewidth]{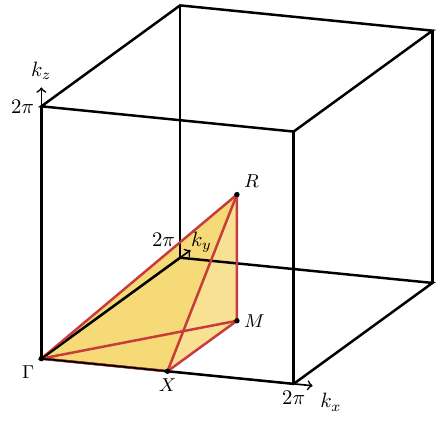}
    \caption{Three-dimensional cubic lattice}
    \label{fig:3dBrillouin}
  \end{subfigure}
  \caption{\small Brillouin zones and selected high-symmetry
  paths: (a) $\Gamma$--$X$--$M$--$\Gamma$ for the two-dimensional square
  lattice, where $\Gamma=(0,0)$, $X=(\pi,0)$, and $M=(\pi,\pi)$;
  (b) $\Gamma$--$X$--$M$--$R$--$\Gamma$ for the three-dimensional simple
  cubic lattice, where $\Gamma=(0,0,0)$, $X=(\pi,0,0)$,
  $M=(\pi,\pi,0)$, and $R=(\pi,\pi,\pi)$.}
  \label{fig:Brillouin-zones}
\end{figure}

\subsubsection{Two-dimensional crystals}
\label{sssec:2d-band}
For the two-dimensional cases, we consider dielectric inclusions
$\Omega^-$ embedded in air ($\varepsilon^{+}=1$), $\Omega^+=\Omega\setminus\overline{\Omega^-}$. 
The lowest bands ($\omega_{1}(\bm{k})$, $\omega_{2}(\bm{k})$,\ldots, $\omega_{n}(\bm{k})$) are computed along the high-symmetry path $\Gamma\to X\to M\to\Gamma$ (Fig.~\ref{fig:2dBrillouin}). 
For each geometry, we use the same uniform triangulation as in Section~\ref{sssec:hom-conv}, with $N=64$, resulting in $2N^2=8192$ triangular cells and a mesh size of $h=\sqrt{2}/64\approx 0.0221$. We use second-family N\'ed\'elec elements of degree one and the same quasi-momentum sampling. Each segment of the high-symmetry path is divided into 20 equal intervals, yielding 61 sampling positions along the full path. The frequencies are reported in the normalized form $\omega a/(2\pi c)$.

\paragraph{Flower-shaped inclusion.}
We use the flower-shaped geometry in~\eqref{eq:flower-levelset} with
$\varepsilon^{-}=2$ to enable comparison with the PWE
method of~\cite[Section~5.15]{sukhoivanov2009photonic}.
Fig.~\ref{fig:flower-band-result} compares the MUFEM and PWE bands
at the same quasi-momentum samples along $\Gamma$--$X$--$M$--$\Gamma$.
The first nine bands agree closely with the PWE results,
with a mean absolute difference of $2.2561\times10^{-3}$
in the normalized frequency $\omega a/(2\pi c)$.
\begin{figure}[htbp]
  \centering
  \begin{subfigure}{0.60\linewidth}
    \centering
    \includegraphics[width=\linewidth]{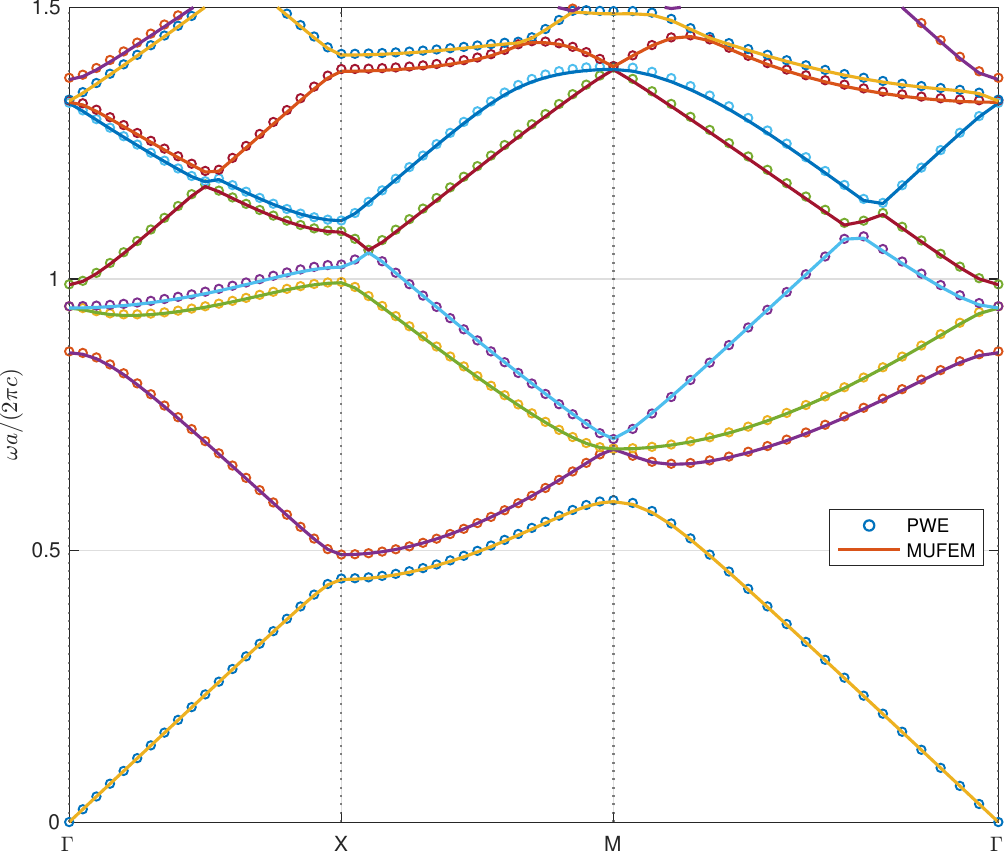}
    \caption{Band structure.}
    \label{fig:flower-band-result}
  \end{subfigure}
  \hfill
  \begin{subfigure}{0.35\linewidth}
    \centering
    \includegraphics[width=\linewidth]{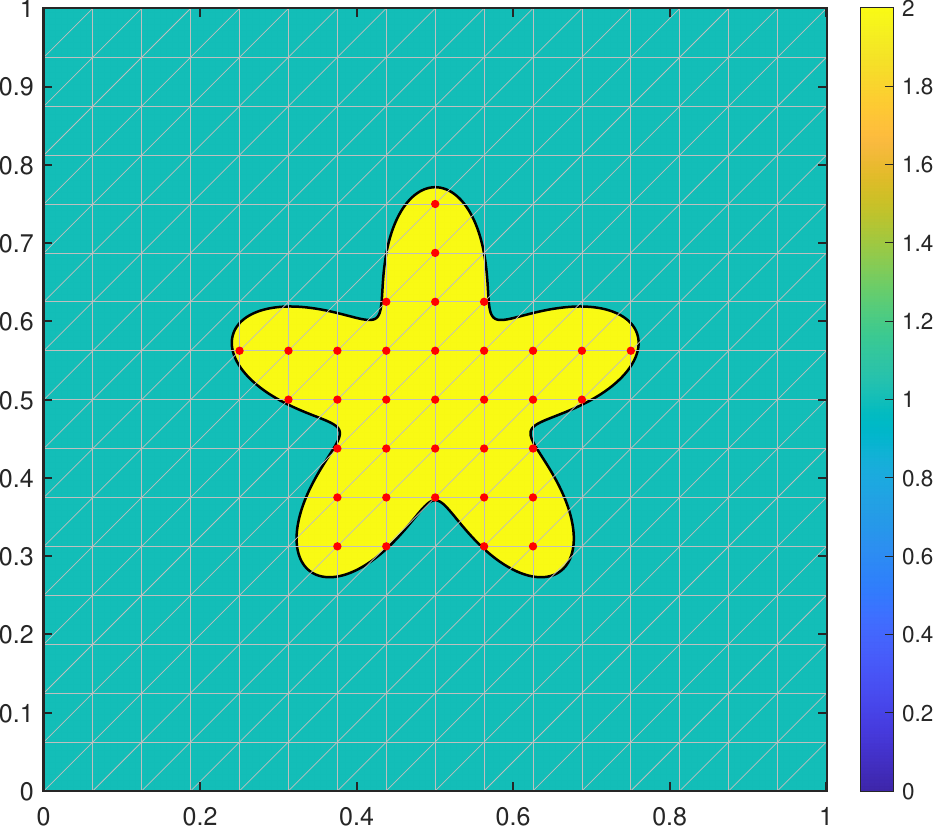}
    \caption{Unit-cell geometry.}
    \label{fig:flower-band-geometry}
  \end{subfigure}
  \caption{Photonic band structure for a square lattice with a low-contrast flower-shaped dielectric inclusion and $\varepsilon^-=2$ in air: (a) the MUFEM results (solid curves) compared with the PWE reference results (dots)~\cite{sukhoivanov2009photonic}; (b) the unit-cell geometry.}
  \label{fig:band2d_flower}
\end{figure}

\paragraph{Elliptic inclusion.}
We next consider a rotated elliptic inclusion centered at $\bm x_{c} = (0.5,0.5)$ with semi-major axis $0.25$ and semi-minor axis $0.16$. For a rotation angle $\alpha$, we define
\begin{equation}
  \Omega^{-} \,=\, \bigl\{\bm x\in\Omega \;\bigl|\; \varphi_{\alpha}(\bm x) < 0\bigr\},
  \qquad
  \varphi_{\alpha}(x,y) = \left(\frac{x_{\alpha}}{0.25}\right)^{2} + \left(\frac{y_{\alpha}}{0.16}\right)^{2} - 1,
\end{equation}
where
\begin{equation}
  x_{\alpha} = \cos\alpha\,(x-0.5) + \sin\alpha\,(y-0.5), \qquad
  y_{\alpha} = -\sin\alpha\,(x-0.5) + \cos\alpha\,(y-0.5),
\end{equation}
and we take $\alpha\in\{0^{\circ},45^{\circ},90^{\circ}\}$ with low contrast $\varepsilon^{-}=2$ and $\varepsilon^{+}=1$.
Fig.~\ref{fig:ellipse-band-result} compares MUFEM and PWE
results~\cite{sukhoivanov2009photonic} for these three orientations
on a fixed background mesh.
For the first nine bands, the two sets of band curves are visually indistinguishable, the mean absolute differences are
$1.2145\times10^{-3}$,
$2.3046\times10^{-3}$, and $1.2663\times10^{-3}$ for
$\alpha=0^{\circ}$, $45^{\circ}$, and $90^{\circ}$, respectively.

\begin{figure}[htbp]
  \centering
  \begin{subfigure}{0.96\linewidth}
    \centering
    \includegraphics[width=\linewidth]{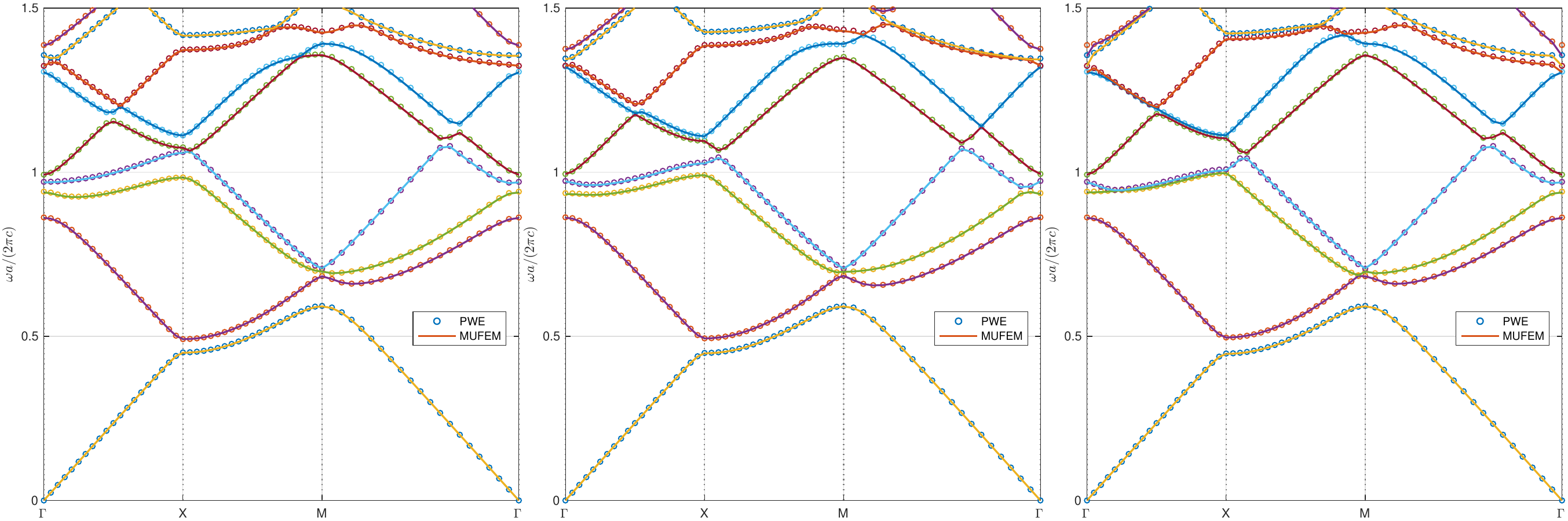}
    \caption{Band structures at $0^{\circ}$, $45^{\circ}$, and $90^{\circ}$.}
    \label{fig:ellipse-band-result}
  \end{subfigure}
  \begin{subfigure}{0.96\linewidth}
    \centering
    \includegraphics[width=\linewidth]{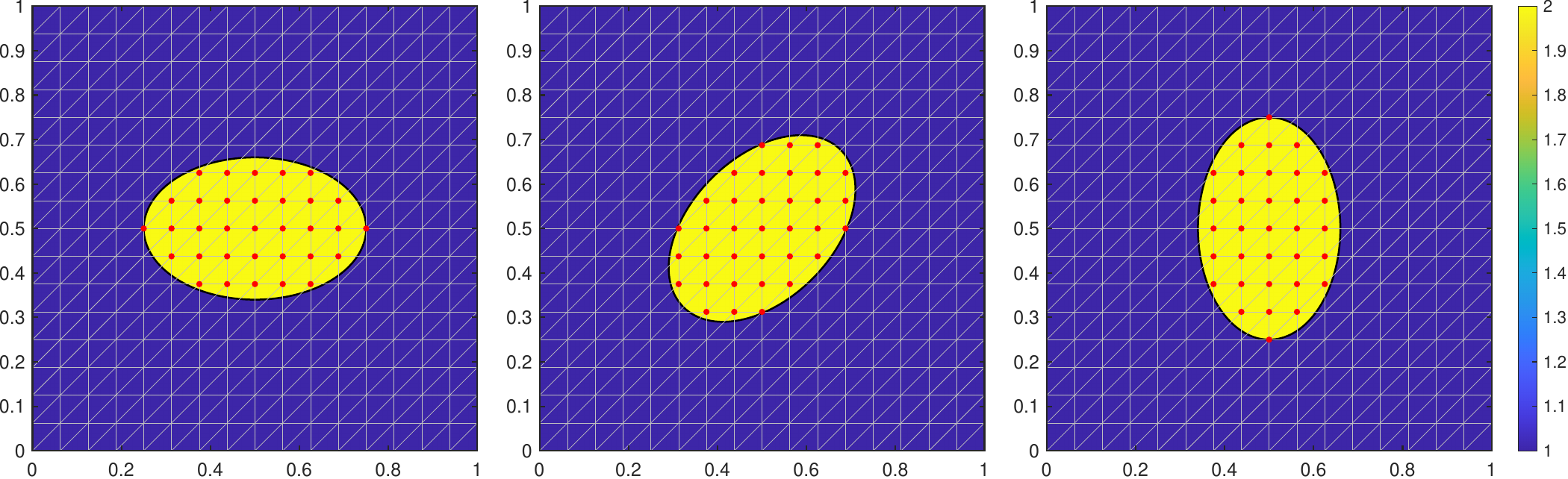}
    \caption{Physical regions for the same rotations.}
    \label{fig:ellipse-band-geometry}
  \end{subfigure}
  \caption{Photonic band structure for a square lattice with a low-contrast elliptic inclusion and $\varepsilon^-=2$ in air. The upper panel compares the MUFEM results (solid curves) with the PWE reference results (dots)~\cite{sukhoivanov2009photonic} for inclusion orientations of $0^{\circ}$, $45^{\circ}$, and $90^{\circ}$ on a fixed background mesh, and the lower panel shows the corresponding physical regions.}
  \label{fig:band2d_ellipse}
\end{figure}

\paragraph{Circular inclusion.}
We consider the circular dielectric rod shown in
Fig.~\ref{fig:circle-band-geometry}, with radius $r_0=0.2$ and center
$(0.5,0.5)$, such that
\begin{equation}
  \Omega^{-} \,=\, \bigl\{\bm x\in\Omega \;\bigl|\; \varphi_{\circ}(\bm x) < 0\bigr\},
  \qquad
  \varphi_{\circ}(x,y) = \sqrt{(x-0.5)^{2}+(y-0.5)^{2}} - r_{0},
\end{equation}
with permittivity
\begin{equation}
  \varepsilon(\bm x) =
  \begin{cases}
    \varepsilon^{-} = 8.9, & \bm x\in\Omega^{-},\\[2pt]
    \varepsilon^{+} = 1,   & \bm x\in\Omega^{+}.
  \end{cases}
\end{equation}
Fig.~\ref{fig:circle-band-result} compares the bands computed
by MUFEM with those obtained using the DG method
in~\cite[Fig.~7]{lu2017discontinuous}.
The first six bands agree closely with the DG results,
including the band-gap location, with a mean absolute
difference of $1.7925\times10^{-3}$.

\begin{figure}[htbp]
  \centering
  \begin{subfigure}{0.6\linewidth}
    \centering
    \includegraphics[width=\linewidth]
    {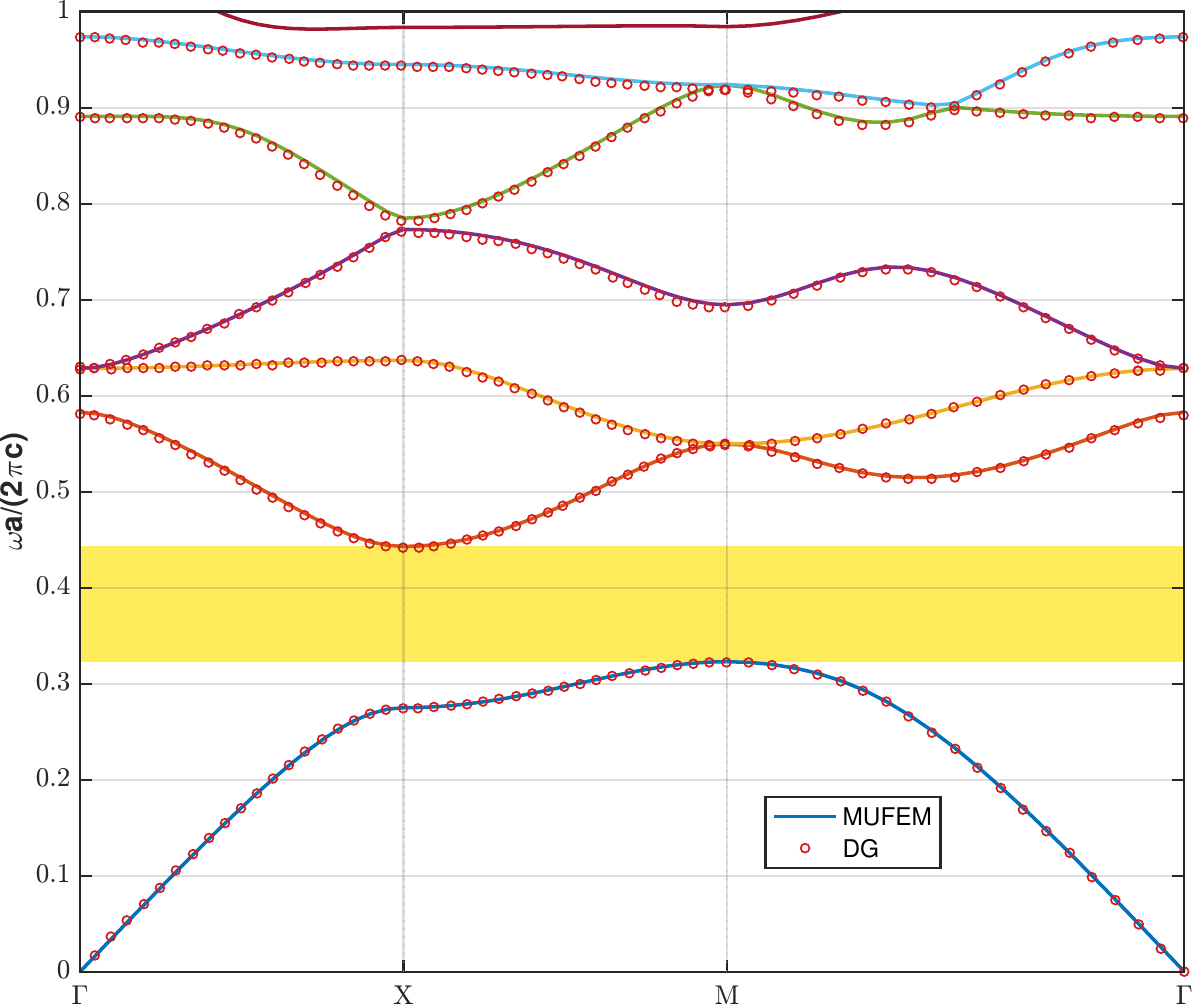}
    \caption{Band structure comparison.}
    \label{fig:circle-band-result}
  \end{subfigure}
  \hfill
  \begin{subfigure}{0.35\linewidth}
    \centering
    \includegraphics[width=\linewidth]{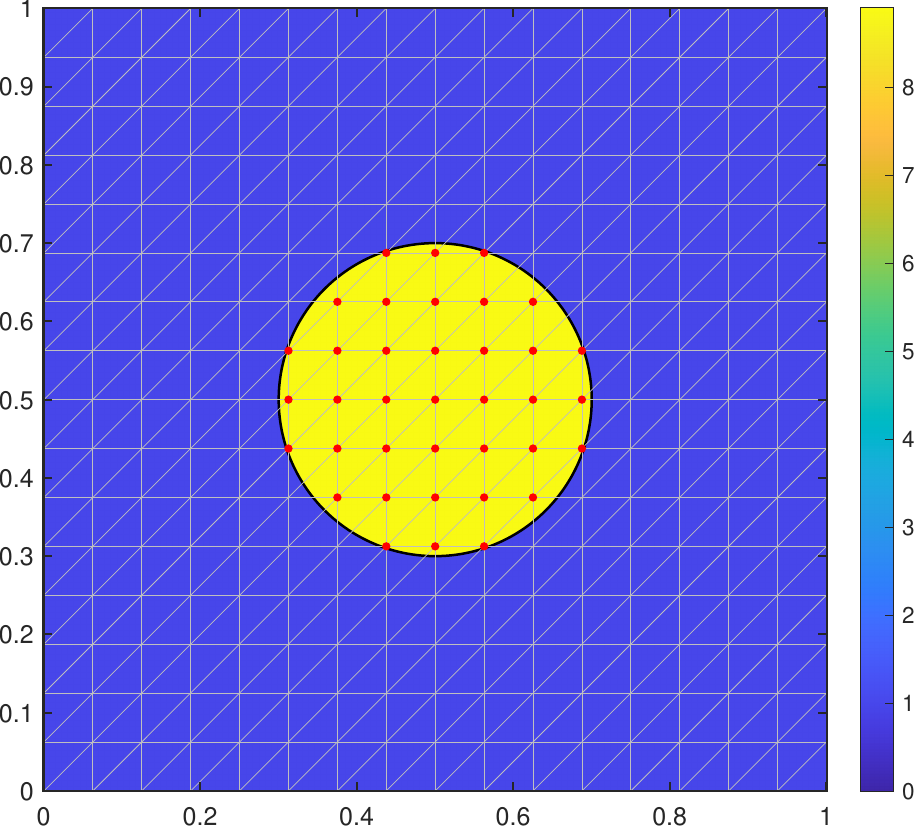}
    \caption{Unit-cell geometry.}
    \label{fig:circle-band-geometry}
  \end{subfigure}
  \caption{Photonic band structure for a square lattice with a circular
  dielectric inclusion of radius $r_0=0.2$ and permittivity
  $\varepsilon^-=8.9$ in air ($\varepsilon^{+}=1$): (a) bands computed by MUFEM (solid curves), compared with the DG results from Fig.~7 of~\cite{lu2017discontinuous} (dots); (b) unit-cell geometry.
  Band gaps are highlighted in (a).}
  \label{fig:band2d_circle}
\end{figure}

\subsubsection{Three-dimensional crystals}
\label{sssec:3d-band}
For the three-dimensional cases, we use the same uniform tetrahedral mesh as in the homogeneous case. The unit cell $\Omega=[0,1]^3$ is partitioned into
$N^3$ uniform cubic cells with $N=36$, and each cubic cell is subdivided into
six tetrahedra, resulting in a tetrahedral mesh with
$6N^3=279936$ elements. The frequencies are reported in the
normalized form $\omega a/(2\pi c)$, and the lowest bands are computed along
the high-symmetry path
$\Gamma\to X\to M\to R\to\Gamma$ shown in Fig.~\ref{fig:3dBrillouin}. Each segment of the high-symmetry path is divided into 12 equal intervals, yielding 49 sampling points along the full path.

\paragraph{Popcorn-shaped inclusion.}
We consider a popcorn-shaped inclusion~\cite{chern2007coupling}
to assess the feasibility of the unfitted discretization for a nonconvex three-dimensional interface, whose zero level set $\psi(x,y,z)=0$
is given by
\begin{equation}
\begin{aligned}
  \psi(x,y,z)
  ={}& \sqrt{(x-c_x)^2+(y-c_y)^2+(z-c_z)^2}-r_0 \\
  &-\sum_{k=0}^{11}A\exp\!\left(
  -\frac{(x-c_x-x_k)^2+(y-c_y-y_k)^2+(z-c_z-z_k)^2}
  {\sigma^2}\right).
\end{aligned}
\label{eq:popcorn-level-set}
\end{equation}
Here, the twelve bump offsets are defined by
\begin{align}
  (x_k,y_k,z_k)
  &= \frac{r_0}{\sqrt{5}}
  \left(2\cos\frac{2k\pi}{5},\,
        2\sin\frac{2k\pi}{5},\,1\right),
  && 0\leq k\leq4, \\
  (x_k,y_k,z_k)
  &= \frac{r_0}{\sqrt{5}}
  \left(2\cos\frac{(2(k-5)-1)\pi}{5},\,
        2\sin\frac{(2(k-5)-1)\pi}{5},\,-1\right),
  && 5\leq k\leq9, \\
  (x_{10},y_{10},z_{10})&=(0,0,r_0),\,(x_{11},y_{11},z_{11})=(0,0,-r_0).
\end{align}
We take $\boldsymbol{c}=(0.5,0.5,0.5)$, $r_0=0.3$, $A=1$, and
$\sigma=0.1$, with low-contrast permittivities $\varepsilon^-=1$ and $\varepsilon^+=2$ inside and outside the popcorn inclusions, respectively. Fig.~\ref{fig:popcorn-band3d-result} compares the MUFEM results with the PWE
reference computation. The mean absolute difference from the PWE
results is $3.2489\times10^{-3}$ over the first six bands.
\begin{figure}[htbp]
  \centering
  \begin{subfigure}{0.58\linewidth}
    \centering
    \includegraphics[width=\linewidth]
    {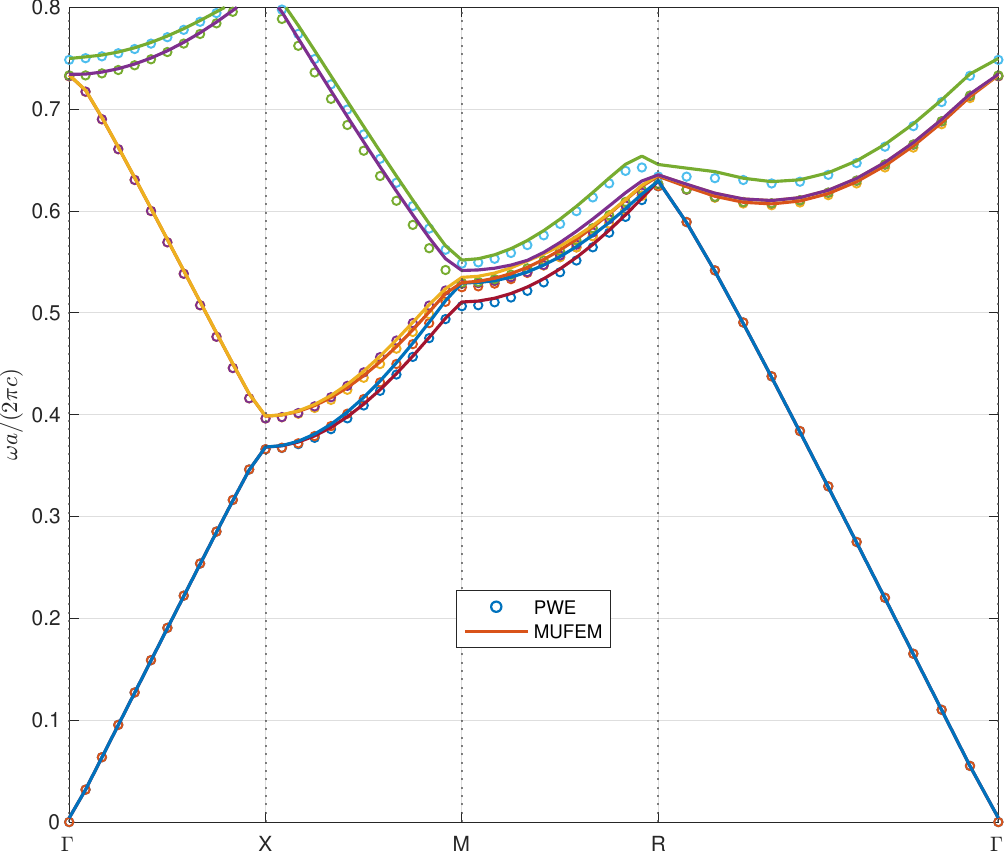}
    \caption{Band structure comparison.}
    \label{fig:popcorn-band3d-result}
  \end{subfigure}
  \begin{subfigure}{0.41\linewidth}
    \centering
    \includegraphics[width=\linewidth]{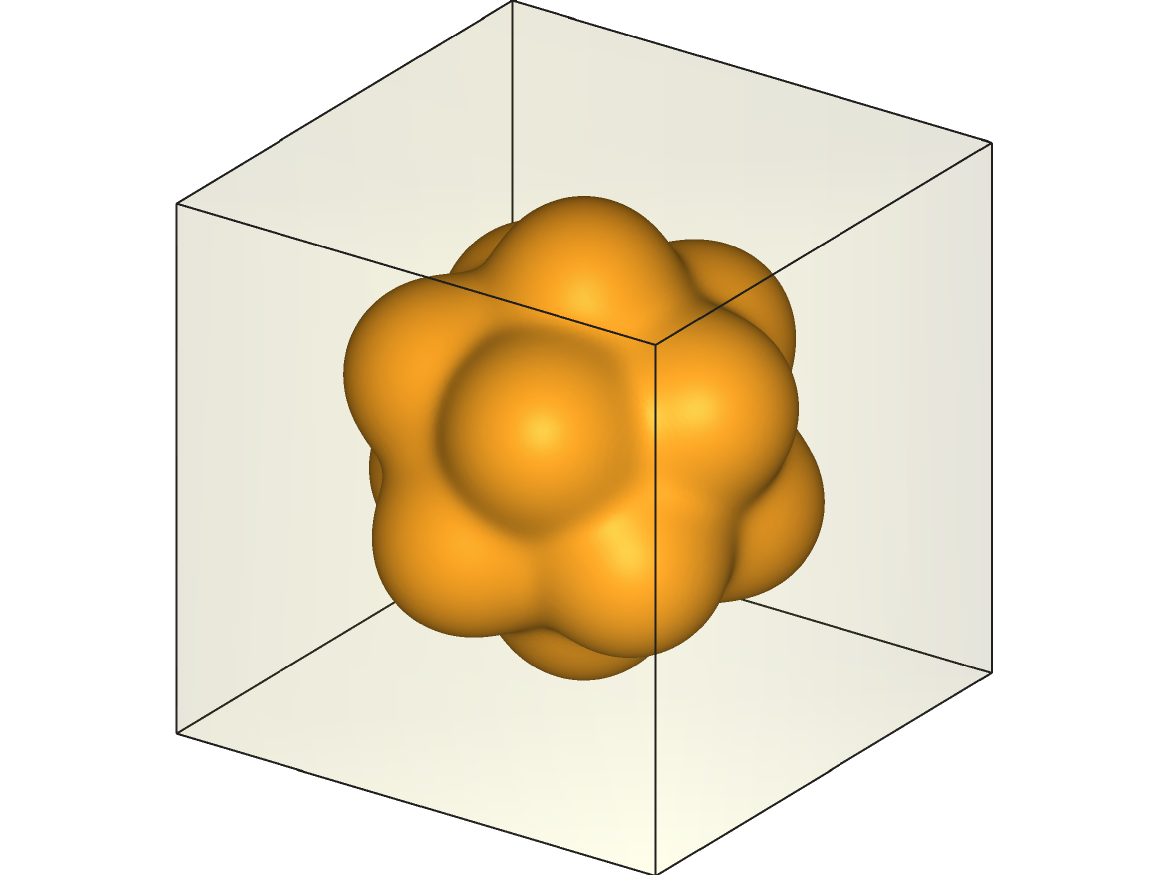}
    \caption{Unit-cell geometry.}
    \label{fig:popcorn-band3d-geometry}
  \end{subfigure}
  \caption{Photonic band structure for a three-dimensional cubic crystal
  with a low-contrast popcorn-shaped inclusion, $\varepsilon^-=1$ and
  $\varepsilon^+=2$: (a) the MUFEM results (solid curves) compared with the
  PWE reference results (dots); (b) the unit-cell geometry.}
  \label{fig:popcorn-band3d}
\end{figure}

\paragraph{Scaffold inclusion.}
We consider the three-dimensional scaffold structure shown in
Fig.~\ref{fig:scaffold-band3d-geometry}, presented by~\cite{lu2017discontinuous,sozuer1993photonic}.
The structure consists of silicon inclusions with $\varepsilon^-=13$
embedded in an air matrix with $\varepsilon^+=1$.
Following~\cite{lu2017discontinuous},
the scaffold frame has thickness $d=0.25$.
Fig.~\ref{fig:scaffold-band3d-result} compares the first seven MUFEM
bands with the reference results in~\cite[Fig.~12]{lu2017discontinuous}.
The MUFEM bands show some deviations from the DG results, while the overall band structure and band-gap locations remain in good agreement, with a mean absolute difference of $4.5534\times10^{-3}$.
\begin{figure}[htbp]
  \centering
  \begin{subfigure}{0.58\linewidth}
    \centering
    \includegraphics[width=\linewidth]{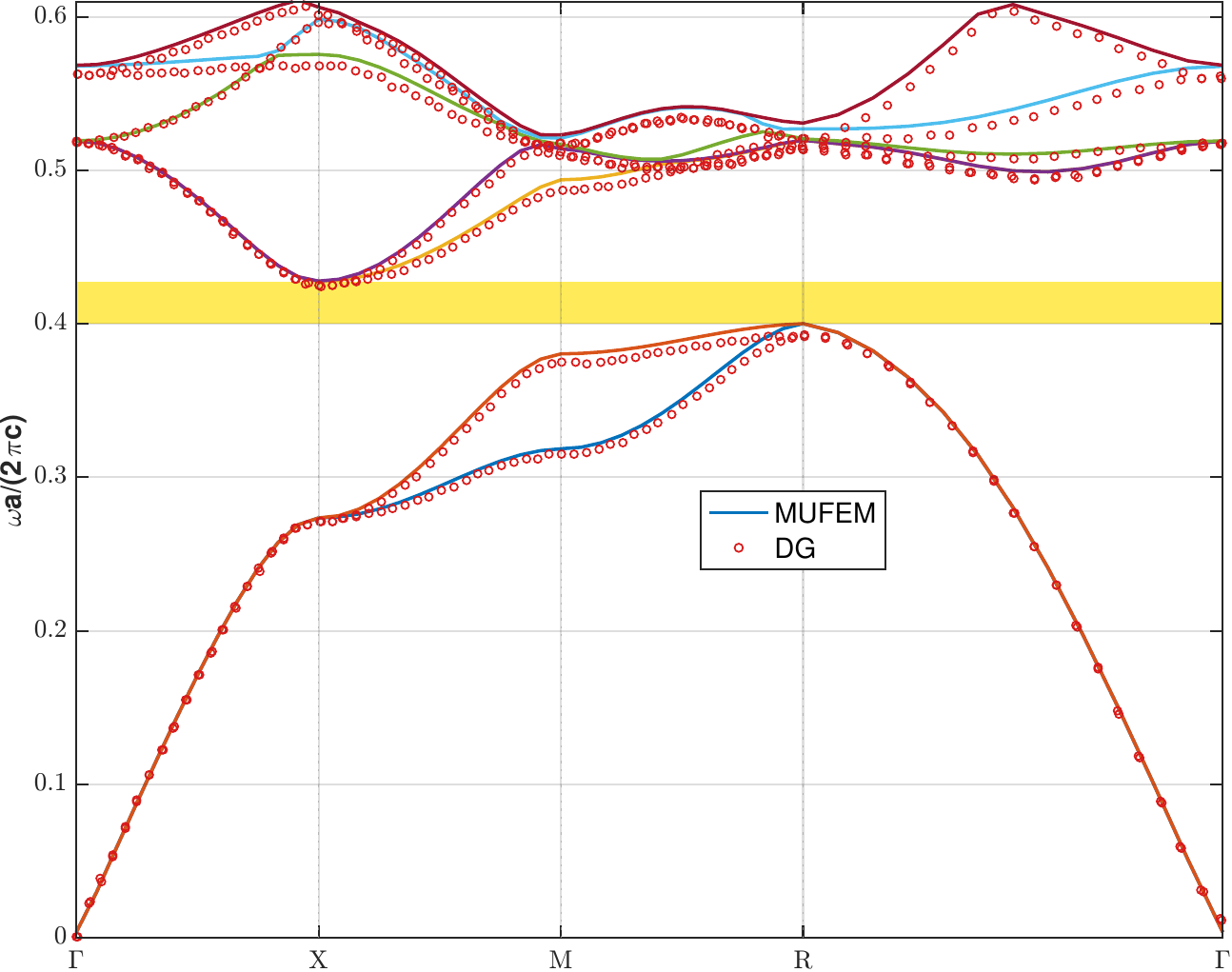}
    \caption{Band structure comparison.}
    \label{fig:scaffold-band3d-result}
  \end{subfigure}
  \begin{subfigure}{0.35\linewidth}
    \centering
    \includegraphics[width=\linewidth]{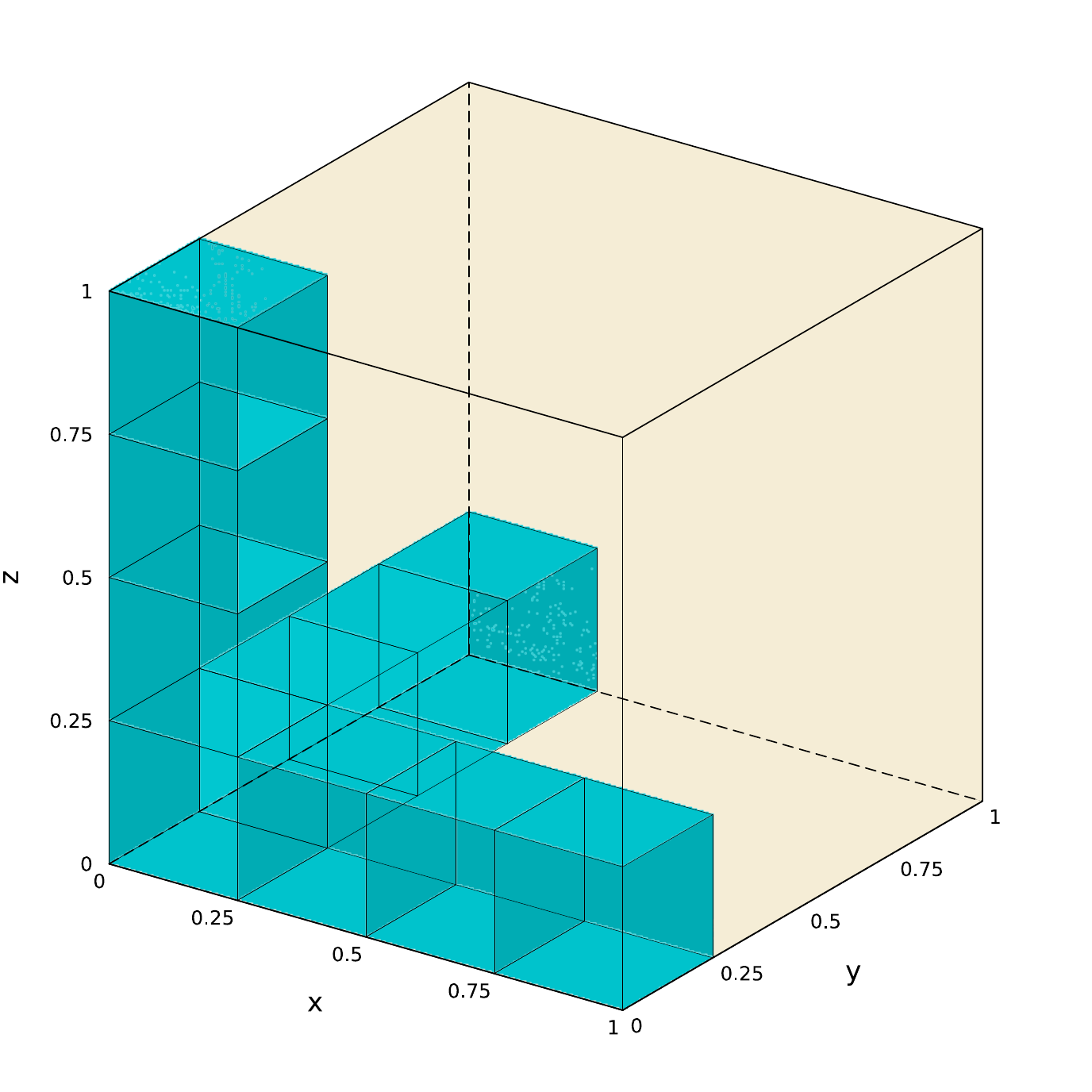}
    \caption{Unit-cell geometry.}
    \label{fig:scaffold-band3d-geometry}
  \end{subfigure}
  \caption{Photonic band structure for the three-dimensional scaffold
  structure~\cite{lu2017discontinuous}, with frame thickness
  $d=0.25$ and $\varepsilon^-=13$ in air ($\varepsilon^+=1$). (a)~The first seven bands
  computed by MUFEM (solid curves), compared with the
  reference results (dots). 
  (b)~The unit-cell geometry. Band gaps are
  highlighted in (a).}
  \label{fig:scaffold-band3d}
\end{figure}

%% file: section7.tex
\section{Conclusion}
\label{sec:conclusion}
In this paper, we have developed a mixed unfitted finite element method for computing photonic band structures with complex interface geometries. By applying the Floquet–Bloch transform, we have reduced the quasi-periodic Maxwell eigenvalue problem to a periodic problem on a single unit cell. Our method employs a uniform background mesh independent of the inclusion geometry, imposes the interface conditions weakly through Nitsche’s method, and stabilizes small cut elements using ghost penalties. A key feature of our construction is that the modified finite element spaces preserve the discrete de~Rham structure associated with the shifted operator. On the theoretical side, we have established the well-posedness of the discrete mixed formulation, derived optimal error estimates for the associated source problem, and proved the convergence of the eigenvalues and eigenspaces using the Babu\v{s}ka–Osborn spectral approximation theory. Numerical experiments in two and three dimensions have confirmed the predicted convergence behavior and shown close agreement with reference band structures for complex inclusion geometries.

%% file: references.bib
@article{yablonovitch1987inhibited,
  title={Inhibited spontaneous emission in solid-state physics and electronics},
  author={Yablonovitch, Eli},
  journal={Physical review letters},
  volume={58},
  number={20},
  pages={2059},
  year={1987},
  publisher={APS}
}

@article{john1987strong,
  title={Strong localization of photons in certain disordered dielectric superlattices},
  author={John, Sajeev},
  journal={Physical review letters},
  volume={58},
  number={23},
  pages={2486},
  year={1987},
  publisher={APS}
}

@article{joannopoulos2008molding,
  title={Molding the flow of light},
  author={Joannopoulos, John D and Johnson, Steven G and Winn, Joshua N and Meade, Robert D},
  journal={Princet. Univ. Press. Princeton, NJ [ua]},
  volume={12},
  pages={33},
  year={2008}
}

@article{russell2003photonic,
  title={Photonic crystal fibers},
  author={Russell, Philip},
  journal={science},
  volume={299},
  number={5605},
  pages={358--362},
  year={2003},
  publisher={American Association for the Advancement of Science}
}

@book{sakoda2005optical,
  title={Optical properties of photonic crystals},
  author={Sakoda, Kazuaki},
  year={2005},
  publisher={Springer}
}

@article{cangellaris1991analysis,
  title={Analysis of the numerical error caused by the stair-stepped approximation of a conducting boundary in FDTD simulations of electromagnetic phenomena},
  author={Cangellaris, Andreas C and Wright, Diana B},
  journal={IEEE transactions on antennas and propagation},
  volume={39},
  number={10},
  pages={1518--1525},
  year={1991},
  publisher={IEEE}
}

@article{giani2012adaptive,
  title={Adaptive finite element methods for computing band gaps in photonic crystals},
  author={Giani, Stefano and Graham, Ivan G},
  journal={Numerische Mathematik},
  volume={121},
  number={1},
  pages={31--64},
  year={2012},
  publisher={Springer}
}

@article{lin1998experimental,
  title={Experimental demonstration of guiding and bending of electromagnetic waves in a photonic crystal},
  author={Lin, Shawn-Yu and Chow, Edmund and Hietala, Vince and Villeneuve, Pierre R and Joannopoulos, JD},
  journal={Science},
  volume={282},
  number={5387},
  pages={274--276},
  year={1998},
  publisher={American Association for the Advancement of Science}
}

@article{krauss1996two,
  title={Two-dimensional photonic-bandgap structures operating at near-infrared wavelengths},
  author={Krauss, Thomas F and Rue, Richard M De La and Brand, Stuart},
  journal={Nature},
  volume={383},
  number={6602},
  pages={699--702},
  year={1996},
  publisher={Nature Publishing Group UK London}
}

@article{ho1990existence,
  title={Existence of a photonic gap in periodic dielectric structures},
  author={Ho, KM and Chan, Che Ting and Soukoulis, Costas M},
  journal={Physical review letters},
  volume={65},
  number={25},
  pages={3152},
  year={1990},
  publisher={APS}
}

@article{inguva1992photonic,
  title={Photonic bands: Convergence problems with the plane-wave method},
  author={Inguva, R},
  journal={PHYSICAL REVIEW B},
  volume={45},
  number={24},
  year={1992}
}

@inproceedings{nitsche1971variationsprinzip,
  title={{\"U}ber ein Variationsprinzip zur L{\"o}sung von Dirichlet-Problemen bei Verwendung von Teilr{\"a}umen, die keinen Randbedingungen unterworfen sind},
  author={Nitsche, Joachim},
  booktitle={Abhandlungen aus dem mathematischen Seminar der Universit{\"a}t Hamburg},
  volume={36},
  pages={9--15},
  year={1971},
  organization={Springer}
}

@article{hansbo2002unfitted,
  title={An unfitted finite element method, based on Nitsche’s method, for elliptic interface problems},
  author={Hansbo, Anita and Hansbo, Peter},
  journal={Computer methods in applied mechanics and engineering},
  volume={191},
  number={47-48},
  pages={5537--5552},
  year={2002},
  publisher={Elsevier}
}

@book{brenner2008mathematical,
  title={The mathematical theory of finite element methods},
  author={Brenner, Susanne C and Scott, L Ridgway},
  year={2008},
  publisher={Springer}
}

@article{burman2010ghost,
  title={Ghost penalty},
  author={Burman, Erik},
  journal={Comptes Rendus Mathematique},
  volume={348},
  number={21-22},
  pages={1217--1220},
  year={2010},
  publisher={Elsevier}
}

@article{burman2012fictitious,
  title={Fictitious domain finite element methods using cut elements: II. A stabilized Nitsche method},
  author={Burman, Erik and Hansbo, Peter},
  journal={Applied Numerical Mathematics},
  volume={62},
  number={4},
  pages={328--341},
  year={2012},
  publisher={Elsevier}
}

@article{burman2015cutfem,
  title={CutFEM: discretizing geometry and partial differential equations},
  author={Burman, Erik and Claus, Susanne and Hansbo, Peter and Larson, Mats G and Massing, Andr{\'e}},
  journal={International Journal for Numerical Methods in Engineering},
  volume={104},
  number={7},
  pages={472--501},
  year={2015},
  publisher={Wiley Online Library}
}

@book{kuchment2012floquet,
  title={Floquet theory for partial differential equations},
  author={Kuchment, Peter A},
  volume={60},
  year={2012},
  publisher={Birkh{\"a}user}
}

@article{painter1999two,
  title={Two-dimensional photonic band-gap defect mode laser},
  author={Painter, Oskar and Lee, RK and Scherer, Axel and Yariv, A and O'brien, JD and Dapkus, PD and Kim, I},
  journal={Science},
  volume={284},
  number={5421},
  pages={1819--1821},
  year={1999},
  publisher={American Association for the Advancement of Science}
}

@book{evans2022partial,
  title={Partial differential equations},
  author={Evans, Lawrence C},
  volume={19},
  year={2022},
  publisher={American mathematical society}
}

@article{joannopoulos1995photonic,
  title={Photonic crystals: molding the flow of light Princeton University},
  author={Joannopoulos, JD and Meads, RD and Winn, JN},
  journal={Princeton, NJ},
  year={1995}
}

@book{kittel2018introduction,
  title={Introduction to solid state physics},
  author={Kittel, Charles and McEuen, Paul},
  year={2018},
  publisher={John Wiley \& Sons}
}

@article{nedelec1980mixed,
  title={Mixed finite elements in R3},
  author={N{\'e}d{\'e}lec, Jean-Claude},
  journal={Numerische Mathematik},
  volume={35},
  number={3},
  pages={315--341},
  year={1980},
  publisher={Springer}
}

@article{nedelec1986new,
  title={A new family of mixed finite elements in R3},
  author={N{\'e}d{\'e}lec, Jean-Claude},
  journal={Numerische Mathematik},
  volume={50},
  number={1},
  pages={57--81},
  year={1986},
  publisher={Springer}
}

@book{dorfler2011photonic,
  title={Photonic crystals: Mathematical analysis and numerical approximation},
  author={D{\"o}rfler, Willy and Lechleiter, Armin and Plum, Michael and Schneider, Guido and Wieners, Christian},
  year={2011},
  publisher={Springer Science \& Business Media}
}

@article{sticko2020high,
  title={High-order cut finite elements for the elastic wave equation},
  author={Sticko, Simon and Ludvigsson, Gustav and Kreiss, Gunilla},
  journal={Advances in Computational Mathematics},
  volume={46},
  number={3},
  pages={45},
  year={2020},
  publisher={Springer}
}

@book{monk2003finite,
  title={Finite element methods for Maxwell's equations},
  author={Monk, Peter},
  year={2003},
  publisher={Oxford university press}
}

@article{lu2017discontinuous,
  title={Discontinuous Galerkin approximations for computing electromagnetic Bloch modes in photonic crystals},
  author={Lu, Zhongjie and {\c{C}}esmelioglu, Ay{\c{c}}il and Van der Vegt, Jaap JW and Xu, Yan},
  journal={Journal of scientific computing},
  volume={70},
  number={2},
  pages={922--964},
  year={2017},
  publisher={Springer}
}

@article{boffi2006modified,
  title={Modified edge finite elements for photonic crystals},
  author={Boffi, Daniele and Conforti, Matteo and Gastaldi, Lucia},
  journal={Numerische Mathematik},
  volume={105},
  pages={249--266},
  year={2006},
  publisher={Springer}
}

@article{wang2022convergence,
  title={Convergence analysis of Nitsche extended finite element methods for H (curl)-elliptic interface problems},
  author={Wang, Nan and Chen, Jinru},
  journal={International Journal of Numerical Analysis and Modeling},
  volume={19},
  number={4},
  pages={487--510},
  year={2022}
}

@article{gurkan2019stabilized,
  title={A stabilized cut discontinuous Galerkin framework for elliptic boundary value and interface problems},
  author={G{\"u}rkan, Ceren and Massing, Andr{\'e}},
  journal={Computer Methods in Applied Mechanics and Engineering},
  volume={348},
  pages={466--499},
  year={2019},
  publisher={Elsevier}
}

@misc{jackson1999classical,
  title={Classical electrodynamics},
  author={Jackson, John David and Fox, Ronald F},
  year={1999},
  publisher={American Association of Physics Teachers}
}

@inproceedings{floquet1883equations,
  title={Sur les {\'e}quations diff{\'e}rentielles lin{\'e}aires {\`a} coefficients p{\'e}riodiques},
  author={Floquet, Gaston},
  booktitle={Annales scientifiques de l'{\'E}cole normale sup{\'e}rieure},
  volume={12},
  pages={47--88},
  year={1883}
}

@article{bloch1929quantenmechanik,
  title={{\"U}ber die quantenmechanik der elektronen in kristallgittern},
  author={Bloch, Felix},
  journal={Zeitschrift f{\"u}r physik},
  volume={52},
  number={7},
  pages={555--600},
  year={1929},
  publisher={Springer}
}

@article{shi2004plane,
  title={Plane-wave expansion method for calculating band structure of photonic crystal slabs with perfectly matched layers},
  author={Shi, Shouyuan and Chen, Caihua and Prather, Dennis W},
  journal={Journal of the Optical Society of America A},
  volume={21},
  number={9},
  pages={1769--1775},
  year={2004},
  publisher={OSA}
}

@article{li2003photonic,
  title={Photonic band structures solved by a plane-wave-based transfer-matrix method},
  author={Li, Zhi-Yuan and Lin, Lan-Lan},
  journal={Physical Review E},
  volume={67},
  number={4},
  pages={046607},
  year={2003},
  publisher={APS}
}

@inproceedings{de2001finite,
  title={Finite-difference time-domain modelling of photonic crystal structures},
  author={de Ridder, Rent5 M and Stoffer, Remco},
  booktitle={Proceedings of 2001 3rd International Conference on Transparent Optical Networks (IEEE Cat. No. 01EX488)},
  pages={22--25},
  year={2001},
  organization={IEEE}
}

@article{wu2000analysis,
  title={Analysis of photonic crystal filters by the finite-difference time-domain technique},
  author={Wu, Bae-Ian and Yang, Eric and Kong, Jin Au and Oswald, JA and McIntosh, KA and Mahoney, L and Verghese, S},
  journal={microwave and optical technology letters},
  volume={27},
  number={2},
  pages={81--87},
  year={2000},
  publisher={Wiley Online Library}
}

@article{andonegui2013finite,
  title={The finite element method applied to the study of two-dimensional photonic crystals and resonant cavities},
  author={Andonegui, Imanol and Garcia-Adeva, Angel J},
  journal={Optics express},
  volume={21},
  number={4},
  pages={4072--4092},
  year={2013},
  publisher={Optical Society of America}
}

@article{degirmenci2013finite,
  title={Finite element method analysis of band gap and transmission of two-dimensional metallic photonic crystals at terahertz frequencies},
  author={Degirmenci, Elif and Landais, Pascal},
  journal={Applied optics},
  volume={52},
  number={30},
  pages={7367--7375},
  year={2013},
  publisher={Optical Society of America}
}

@article{crocco2007generalized,
  title={Generalized scattering-matrix method for the analysis of two-dimensional photonic bandgap devices},
  author={Crocco, Lorenzo and Cuomo, Fabrizio and Isernia, Tommaso},
  journal={Journal of the Optical Society of America A},
  volume={24},
  number={10},
  pages={A12--A22},
  year={2007},
  publisher={OSA}
}

@article{yuan2008modeling,
  title={Modeling photonic crystals by boundary integral equations and Dirichlet-to-Neumann maps},
  author={Yuan, Jianhua and Lu, Ya Yan and Antoine, Xavier},
  journal={Journal of Computational Physics},
  volume={227},
  number={9},
  pages={4617--4629},
  year={2008},
  publisher={Elsevier}
}

@article{mescia2011particle,
  title={Particle swarm optimization for the design and characterization of silica-based photonic crystal fiber amplifiers},
  author={Mescia, L and Giaquinto, A and Fornarelli, G and Acciani, G and De Sario, M and Prudenzano, F},
  journal={Journal of non-crystalline solids},
  volume={357},
  number={8-9},
  pages={1851--1855},
  year={2011},
  publisher={Elsevier}
}

@article{wang2021finite,
  title={A finite element method for the band structure computation of photonic crystals with complex scatterer geometry},
  author={Wang, Liqun and Zhao, Meiling and Zhang, Yifan and Shi, Liwei},
  journal={Computer Physics Communications},
  volume={263},
  pages={107869},
  year={2021},
  publisher={Elsevier}
}

@article{norton2013planewave,
  title={Planewave expansion methods for photonic crystal fibres},
  author={Norton, Richard A and Scheichl, Robert},
  journal={Applied numerical mathematics},
  volume={63},
  pages={88--104},
  year={2013},
  publisher={Elsevier}
}

@article{guo2021unfitted,
  title={Unfitted Nitsche’s method for computing band structures of phononic crystals with periodic inclusions},
  author={Guo, Hailong and Yang, Xu and Zhu, Yi},
  journal={Computer Methods in Applied Mechanics and Engineering},
  volume={380},
  pages={113743},
  year={2021},
  publisher={Elsevier}
}

@article {guo2021topmaterials,
    AUTHOR = {Guo, Hailong and Yang, Xu and Zhu, Yi},
     TITLE = {Unfitted {N}itsche's method for computing wave modes in
              topological materials},
   JOURNAL = {J. Sci. Comput.},
  FJOURNAL = {Journal of Scientific Computing},
    VOLUME = {88},
      YEAR = {2021},
    NUMBER = {1},
     PAGES = {Paper No. 24, 28},
      ISSN = {0885-7474,1573-7691},
   MRCLASS = {65N30 (74A50 74J05)},
  MRNUMBER = {4270808},
       //DOI = {10.1007/s10915-021-01540-w},
}

@article {chen2009adaptiveimmersed,
    AUTHOR = {Chen, Zhiming and Xiao, Yuanming and Zhang, Linbo},
     TITLE = {The adaptive immersed interface finite element method for
              elliptic and {M}axwell interface problems},
   JOURNAL = {J. Comput. Phys.},
  FJOURNAL = {Journal of Computational Physics},
    VOLUME = {228},
      YEAR = {2009},
    NUMBER = {14},
     PAGES = {5000--5019},
      ISSN = {0021-9991,1090-2716},
   MRCLASS = {65N30 (78M10)},
  MRNUMBER = {2537841},
MRREVIEWER = {Nilima\ Nigam},
       //DOI = {10.1016/j.jcp.2009.03.044},
       //URL = {https://doi.org/10.1016/j.jcp.2009.03.044},
}

@article {chen2024highorder,
    AUTHOR = {Chen, Zhiming and Li, Ke and Lyu, Maohui and Xiang, Xueshaung},
     TITLE = {A high order unfitted finite element method for time-harmonic
              {M}axwell interface problems},
   JOURNAL = {Int. J. Numer. Anal. Model.},
  FJOURNAL = {International Journal of Numerical Analysis and Modeling},
    VOLUME = {21},
      YEAR = {2024},
    NUMBER = {6},
     PAGES = {822--849},
      ISSN = {1705-5105,2617-8710},
   MRCLASS = {65N30 (65N12 78M10)},
  MRNUMBER = {4809778},
MRREVIEWER = {Xia\ Ji},
       //DOI = {10.4208/ijnam2024-1033},
       //URL = {https://doi.org/10.4208/ijnam2024-1033},
}

@book{arnold2018finite,
  title={Finite element exterior calculus},
  author={Arnold, Douglas N},
  year={2018},
  publisher={SIAM}
}

@article{wang2025mixed,
  title={Mixed Nitsche extended finite element method for solving three-dimensional H (curl)-elliptic interface problems},
  author={Wang, Nan and Chu, Hanyu and Chen, Jinru and Cai, Ying},
  journal={Computers \& Mathematics with Applications},
  volume={199},
  pages={22--44},
  year={2025},
  publisher={Elsevier}
}

@article{cao2021virtual,
  title={A virtual finite element method for two-dimensional Maxwell interface problems with a background unfitted mesh},
  author={Cao, Shuhao and Chen, Long and Guo, Ruchi},
  journal={Mathematical Models and Methods in Applied Sciences},
  volume={31},
  number={14},
  pages={2907--2936},
  year={2021},
  publisher={World Scientific}
}

@article{liu2020interface,
  title={Interface-penalty finite element methods for interface problems in H1, H (curl), and H (div)},
  author={Liu, Huaqing and Zhang, Linbo and Zhang, Xiaodi and Zheng, Weiying},
  journal={Computer Methods in Applied Mechanics and Engineering},
  volume={367},
  pages={113137},
  year={2020},
  publisher={Elsevier}
}

@article{luo2009spectral,
  title={Spectral element method for band structures of two-dimensional anisotropic photonic crystals},
  author={Luo, Ma and Liu, Qing Huo and Li, Zhibing},
  journal={Physical Review E—Statistical, Nonlinear, and Soft Matter Physics},
  volume={79},
  number={2},
  pages={026705},
  year={2009},
  publisher={APS}
}

@article{busch2011discontinuous,
  title={Discontinuous Galerkin methods in nanophotonics},
  author={Busch, Kurt and K{\"o}nig, Michael and Niegemann, Jens},
  journal={Laser \& Photonics Reviews},
  volume={5},
  number={6},
  pages={773--809},
  year={2011},
  publisher={Wiley Online Library}
}

@article{chen2020multiphysics,
  title={Multiphysics simulation of plasmonic photoconductive devices using discontinuous Galerkin methods},
  author={Chen, Liang and Bagci, Hakan},
  journal={IEEE Journal on Multiscale and Multiphysics Computational Techniques},
  volume={5},
  pages={188--200},
  year={2020},
  publisher={IEEE}
}

@article{chen2016finite,
  title={Finite element methods for Maxwell equations},
  author={Chen, LONG},
  journal={Lecture Notes},
  year={2016}
}

@book{brezzi2012mixed,
  title={Mixed and hybrid finite element methods},
  author={Brezzi, Franco and Fortin, Michel},
  volume={15},
  year={2012},
  publisher={Springer Science \& Business Media}
}

@article{dobson2001analysis,
  title={Analysis of an algorithm for computing electromagnetic Bloch modes using Nedelec spaces},
  author={Dobson, David C and Pasciak, Joseph E},
  journal={Computational methods in applied mathematics},
  volume={1},
  number={2},
  pages={138--153},
  year={2001},
  publisher={De Gruyter Berlin}
}

@incollection{babuska1991eigenvalue,
  author    = {Babu\v{s}ka, I. and Osborn, J.},
  title     = {Eigenvalue Problems},
  booktitle = {Handbook of Numerical Analysis, Vol.\ II},
  editor    = {Ciarlet, P.~G. and Lions, J.-L.},
  publisher = {North-Holland, Amsterdam},
  year      = {1991},
  pages     = {641--787}
}

@article{osborn1975spectral,
  title={Spectral approximation for compact operators},
  author={Osborn, John E},
  journal={Mathematics of computation},
  volume={29},
  number={131},
  pages={712--725},
  year={1975}
}

@book{sukhoivanov2009photonic,
  title={Photonic crystals: physics and practical modeling},
  author={Sukhoivanov, Igor A and Guryev, Igor V},
  volume={152},
  year={2009},
  publisher={Springer}
}

@article{sozuer1993photonic,
  title={Photonic bands: simple-cubic lattice},
  author={S{\"o}z{\"u}er, H Sami and Haus, Joseph W},
  journal={Journal of the Optical Society of America B},
  volume={10},
  number={2},
  pages={296--302},
  year={1993},
  publisher={Optical Society of America}
}

@article{ciarlet1981two,
  title={Two-dimensional approximations of three-dimensional eigenvalue problems in plate theory},
  author={Ciarlet, Philippe Gaston and Kesavan, S},
  journal={Computer Methods in Applied Mechanics and Engineering},
  volume={26},
  number={2},
  pages={145--172},
  year={1981},
  publisher={Elsevier}
}

@article{chern2007coupling,
  title={A coupling interface method for elliptic interface problems},
  author={Chern, I-Liang and Shu, Yu-Chen},
  journal={Journal of Computational Physics},
  volume={225},
  number={2},
  pages={2138--2174},
  year={2007},
  publisher={Elsevier}
}

@article{boffi2010finite,
  title={Finite element approximation of eigenvalue problems},
  author={Boffi, Daniele},
  journal={Acta numerica},
  volume={19},
  pages={1--120},
  year={2010},
  publisher={Cambridge University Press}
}

@article{takahashi2009direct,
  title={Direct creation of three-dimensional photonic crystals by a top-down approach},
  author={Takahashi, Shigeki and Suzuki, Katsuyoshi and Okano, Makoto and Imada, Masahiro and Nakamori, Takeshi and Ota, Yuji and Ishizaki, Kenji and Noda, Susumu},
  journal={Nature materials},
  volume={8},
  number={9},
  pages={721--725},
  year={2009},
  publisher={Nature Publishing Group UK London}
}

@article{serbin2004fabrication,
  title={Fabrication of woodpile structures by two-photon polymerization and investigation of their optical properties},
  author={Serbin, Jesper and Ovsianikov, Aleksandr and Chichkov, Boris},
  journal={Optics Express},
  volume={12},
  number={21},
  pages={5221--5228},
  year={2004},
  publisher={Optical Society of America}
}

@article{feng2005imaging,
  title={Imaging properties of an elliptical-rod photonic-crystal slab lens},
  author={Feng, Shuai and Li, Zhi-Yuan and Feng, Zhi-Fang and Cheng, Bing-Ying and Zhang, Dao-Zhong},
  journal={Physical Review B—Condensed Matter and Materials Physics},
  volume={72},
  number={7},
  pages={075101},
  year={2005},
  publisher={APS}
}

@article{jiang1996origin,
  title={The origin of spurious solutions in computational electromagnetics},
  author={Jiang, Bo-nan and Wu, Jie and Povinelli, Louis A},
  journal={Journal of computational physics},
  volume={125},
  number={1},
  pages={104--123},
  year={1996},
  publisher={Elsevier}
}

@article{boffi1999computational,
  title={Computational models of electromagnetic resonators: analysis of edge element approximation},
  author={Boffi, Daniele and Fernandes, Paolo and Gastaldi, Lucia and Perugia, Ilaria},
  journal={SIAM journal on numerical analysis},
  volume={36},
  number={4},
  pages={1264--1290},
  year={1999},
  publisher={SIAM}
}

@book{ammari2018mathematical,
  title={Mathematical and computational methods in photonics and phononics},
  author={Ammari, Habib and Fitzpatrick, Brian and Kang, Hyeonbae and Ruiz, Matias and Yu, Sanghyeon and Zhang, Hai},
  volume={235},
  year={2018},
  publisher={American Mathematical Soc.}
}
